\documentclass[11pt]{article}
\usepackage[title]{appendix}
\usepackage{xcolor}
 \usepackage{geometry}                
\usepackage{graphicx}
\usepackage{subcaption}
\usepackage{pdflscape}
\usepackage{mathrsfs}

\usepackage{epstopdf}
\usepackage{rotating}
\usepackage{longtable} 
\usepackage{adjustbox}

\usepackage{color}
\usepackage{bbm, dsfont}
\usepackage{pst-node}
\usepackage{tikz-cd}
\usetikzlibrary{arrows.meta}
\usepackage{amsfonts} 
\usepackage{geometry}
\usepackage{amsthm}
\usepackage{amsmath}
\usepackage{amssymb}
\usepackage{enumitem}
\usepackage{float}
\restylefloat{table}
\usepackage{tikz}
\usetikzlibrary{tikzmark}
\usetikzlibrary{arrows}
\usetikzlibrary{cd}
\usetikzlibrary{automata, positioning, arrows}
\usetikzlibrary{arrows.meta}
\usepackage [english]{babel}
\usepackage [autostyle, english = american]{csquotes}
\usepackage{algorithm}
\usepackage[noend]{algpseudocode} 
\makeatletter
\def\BState{\State\hskip-\ALG@thistlm}
\makeatother

\def\X^n{\text{X}^n}

\def\Id{\textup{Id}}
\usepackage{hyperref}
\hypersetup{hidelinks}

\usepackage[normalem]{ulem}

\newlist{casess}{enumerate}{1}
\setlist[casess]{label=     \textbf{Case} \arabic*:}
\usepackage{mathtools}

\newcommand\addtag{\refstepcounter{equation}\tag{\theequation}}
\makeatletter
\newcommand*{\rom}[1]{\expandafter\@slowromancap\romannumeral #1@}
\makeatother

\theoremstyle{plain}
\newtheorem{thm}{Theorem}[section]
\newtheorem{prop}[thm]{Proposition}
\newtheorem{cor}[thm]{Corollary}
\newtheorem{lemma}[thm]{Lemma}
\newtheorem*{thmA}{Theorem A}
\newtheorem*{thmB}{Theorem B}

\theoremstyle{definition}
\newtheorem{defn}[thm]{Definition}
\newtheorem{rmk}[thm]{Remark}

\usepackage{etoolbox}

\makeatletter
\patchcmd{\ttlh@hang}{\parindent\z@}{\parindent\z@\leavevmode}{}{}
\patchcmd{\ttlh@hang}{\noindent}{}{}{}
\makeatother

\usepackage{listings}
\usepackage{color} 
\definecolor{mygreen}{RGB}{28,172,0} 
\definecolor{mylilas}{RGB}{170,55,241}

\newlist{Assumptions}{enumerate}{1}
\setlist[Assumptions]{label=     \textbf{Assumption} \arabic*:}

\makeatletter

\newsavebox{\@brx}
\newcommand{\llangle}[1][]{\savebox{\@brx}{$\m@th{#1\langle}$}%
  \mathopen{\copy\@brx\kern-0.5\wd\@brx\usebox{\@brx}}}
\newcommand{\rrangle}[1][]{\savebox{\@brx}{$\m@th{#1\rangle}$}%
  \mathclose{\copy\@brx\kern-0.5\wd\@brx\usebox{\@brx}}}
\makeatother

\usepackage{titlesec}
\titleformat{\subsection}[runin]
       {\normalfont\bfseries}
       {\thesubsection}
       {0.5em}
       {}
       [.]

 \hypersetup{
  hidelinks,
  pdftitle={Near full groups of bounded type: full group completion},
  pdfauthor={Zheng Kuang},
  pdfsubject={Groups of bounded type and topological full groups},
  pdfkeywords={topological full groups; etale groupoids; AF-by-discrete groupoids; bounded type; germ-defining singularities}
}

\title{Near full groups of bounded type: full group completion}
\author{Zheng Kuang\thanks{School of Mathematics, South China University of Technology, 381 Wushan Rd, Tianhe District, Guangzhou 510641, China; Email: kzkzkzz@scut.edu.cn}}
\date{}

\begin{document}

\maketitle

\begin{abstract}
We study finitely generated groups of bounded type arising from tile inflation processes over Bratteli diagrams and containing the alternating group of the associated tail groupoid. Under a localization condition on finitely many singular germs, we show that the topological full group is obtained by adjoining finitely many finitary transformations. The number of adjoined transformations can be taken to be the dimension of a quotient of the mod-$2$ dimension group, and equality with the topological full group is characterized by surjectivity of the parity map restricted to the finitary elements of the original group. Under an additional parity condition on the AF truncations arising in the localization, the original group is near full, and its index is the order of the same parity quotient. We give a criterion for localization in terms of finitely many representatives of singular germs preserving arbitrarily deep cylinders, and an odd-order criterion which implies the parity condition. As an application, we study a fragmentation group of the modified LMS-group associated with the Penrose tiling and prove that it coincides with its topological full group.
\end{abstract}

\medskip
\noindent\textbf{2020 Mathematics Subject Classification.}
Primary 20F65, 22A22; Secondary 37B05.

\smallskip
\noindent\textbf{Keywords.} Topological full groups, étale groupoids, AF-by-discrete groupoids, bounded type, germ-defining singularities.

\tableofcontents

\section{Introduction}
\label{sec:introduction}

We study finitely generated groups of bounded type defined by tile inflation processes over Bratteli diagrams. Let $\widehat{G}$ be such a group acting on the path space $\Omega(\mathsf{B})$ of a Bratteli diagram $\mathsf{B}$, $\mathfrak{G}:=\operatorname{Germ}(\widehat{G}\curvearrowright\Omega(\mathsf{B}))$, and let $\mathsf{F}(\mathfrak{G})$ be the topological full group of $\mathfrak{G}$. The action gives a canonical inclusion $\widehat{G}\leq\mathsf{F}(\mathfrak{G})$. We call $\widehat{G}$ \emph{near full} if the index $\bigl[\mathsf{F}(\mathfrak{G}):\widehat{G}\bigr]<\infty$. We study when $\widehat{G}$ is near full, when it is equal to $\mathsf{F}(\mathfrak{G})$, and how the index can be computed.

The main motivation is to determine when a tile inflation process~\cite[Subsection~2.1]{kua26} gives an explicit description of the topological full group of its own groupoid of germs. A tile inflation process defines finite tiles, connectors, boundary points, and the action of $\widehat{G}$ recursively from finite data. We ask when the same data also determine $\mathsf{F}(\mathfrak{G})$, that is, when the construction of the full group is already realized by the original tile inflation process, up to a finite-index subgroup. This is different from embedding a prescribed group \emph{a posteriori} into the topological full group of another dynamical system. Near fullness measures how far the original tile inflation is from describing the whole topological full group. A further motivation comes from the study of growth of groups of bounded type. Near full groups provide a natural class with particularly well-controlled algebraic structure.

Groups of bounded type defined by tile inflations generalize groups defined by bounded automata acting on regular rooted trees \cite{sid00,Bon}. This rooted-tree setting can be realized within the tile-inflation framework. By \cite[Theorem~2.4.9]{nek22}, residually finite actions on Cantor spaces are exactly those topologically conjugate to actions on boundaries of rooted trees. Such actions on the boundary are equicontinuous. In the rooted-tree case, there is a single tile on each level, while in the general tile-inflation setting, there may be several tiles on the same level, indexed by the vertices of the Bratteli diagram. The actions considered here and in the companion paper~\cite{kua26b} are expansive, and hence are not residually finite. Moreover, a group acting faithfully on a locally finite rooted tree is residually finite as an abstract group, whereas the AF alternating group of a simple Bratteli diagram with infinite path space is infinite and simple \cite[Lemma~3.4]{Matui06}. Hence such a group cannot contain the AF alternating group.

Bratteli diagrams also occur in the study of minimal Cantor systems and their topological full groups \cite{HPS92,Matui06}. For free minimal actions of $\mathbb{Z}$ or $\mathbb{Z}^d$, the associated groupoids are Hausdorff and principal. In this setting, one starts with the acting group and constructs a much larger group by allowing locally defined transformations along the orbits. This is different from the near-full problem considered here, where the starting group already contains a large part of the local structure of its own topological full group. The distinction also appears for more complicated embeddings. Matte Bon \cite{MatteBon15} showed that every Grigorchuk group embeds in the commutator subgroup of the topological full group of a minimal subshift. Matui \cite{Matui13} proved that this commutator subgroup has exponential growth. Hence an intermediate-growth Grigorchuk group cannot have finite index in the topological full group of the minimal subshift used in this embedding. 

The prototype of near fullness is the construction of fragmentation groups by Nekrashevych \cite{nekrash18}. Suitable fragmentations of minimal dihedral group actions produce the first examples of simple torsion groups of intermediate growth, and in the expansive golden mean example, the fragmented group is equal to its topological full group. More recently, Nekrashevych \cite{nek25} used substitutional systems of Schreier graphs to embed the Grigorchuk group into virtually simple torsion groups of intermediate growth. A common feature of these constructions is that the relevant orbital graphs have a linear structure: they are described by chains or by recursively substituted finite line segments. Cantu \cite{JC19} generalized the fragmentation method to broader classes of groups of homeomorphisms of the Cantor space to study torsion, and introduced germ-defining singularities to control the graphs of germs. The idea of germ-defining singularities is based on Vorobets'~\cite{vor12} description of the Schreier graphs of the Grigorchuk group \cite{gri84,GLN}, which is a paradigm of our discussion. The tile inflation framework introduced in \cite{kua25,kua26} does not require the orbital graphs to be linear. The finite tiles may have more general graph structure, and the resulting orbital graphs need not be quasi-isometric to a line.

Let $\mathfrak{T}_{\mathsf{B}}$ be the tail groupoid of $\mathsf{B}$. We consider AF-by-discrete groupoids for which $\mathfrak{T}_{\mathsf{B}}$ is an open subgroupoid of $\mathfrak{G}$ and $\mathfrak{G}\setminus\mathfrak{T}_{\mathsf{B}}$ is discrete. The finite tiles describe the AF core $\mathfrak{T}_{\mathsf{B}}$, while the persistent boundary points describe the germs outside the AF core. The examples considered here are non-principal, and their groupoids may be non-Hausdorff. Thus near fullness occurs in a setting different from both equicontinuous actions on rooted trees and the principal groupoids arising from minimal Cantor systems.

Suppose that $\widehat{G}$ contains the AF alternating group $\mathsf{A}(\mathfrak{T}_{\mathsf{B}})$. We show that the difference between $\widehat{G}$ and $\mathsf{F}(\mathfrak{G})$ is determined by two finite parts. The first part consists of the singular germs. Under the finite singular germ condition, there are finitely many germ-defining singular points whose groups of germs are finite, and every non-AF germ has a unique factorization through one of these singular points. The localization condition allows these germs to be localized to sufficiently deep compatible cylinders. The second part is the AF parity. Put $\mathsf{P}_{\mathsf{B}}=H_0(\mathfrak{T}_{\mathsf{B}};\mathbb{Z}/2\mathbb{Z})$. The AF parity map gives $\mathsf{S}(\mathfrak{T}_{\mathsf{B}})/\mathsf{A}(\mathfrak{T}_{\mathsf{B}})\cong\mathsf{P}_{\mathsf{B}}$. Since the numbers of vertices of $\mathsf{B}$ are uniformly bounded, $\mathsf{P}_{\mathsf{B}}$ is finite.

These conditions are read from the tile inflation process. The singular data are carried by the persistent boundary points and their pathwise sections, while the AF parity is computed from the Bratteli diagram. The containment $\mathsf{A}(\mathfrak{T}_{\mathsf{B}})\leq\widehat{G}$ is also a finite-level problem for the tile inflation; sufficient conditions for this containment are given in the companion paper \cite{kua26b}. Thus the problem of near fullness can be reduced to finite-level data associated with the same tile inflation that defines $\widehat{G}$.

The main theorem of this paper is as follows (Theorem~\ref{thm:AF-parity-full-group-completion}).

\begin{thmA}
Let $\widehat{G}$ be a finitely generated group of bounded type over a simple Bratteli diagram $\mathsf{B}$ with infinite path space, satisfying the finite singular germ condition, and put $\mathfrak{G}=\operatorname{Germ}(\widehat{G}\curvearrowright\Omega(\mathsf{B}))$. Suppose that $\mathsf{A}(\mathfrak{T}_{\mathsf{B}})\leq\widehat{G}$ and that the singular germs satisfy the localization condition.

Let $\mathsf{P}_{\mathsf{B}}=H_0(\mathfrak{T}_{\mathsf{B}};\mathbb{Z}/2\mathbb{Z})$ and $\mathsf{P}_{\widehat{G}}=\varepsilon_{\mathsf{B}}\bigl(\widehat{G}\cap\mathsf{S}(\mathfrak{T}_{\mathsf{B}})\bigr)$. Put $d=\dim_{\mathbb{Z}/2\mathbb{Z}}\bigl(\mathsf{P}_{\mathsf{B}}/\mathsf{P}_{\widehat{G}}\bigr)$. Then there exist $\tau_1,\ldots,\tau_d\in\mathsf{S}(\mathfrak{T}_{\mathsf{B}})$ such that 
\[\left\langle\widehat{G},\tau_1,\ldots,\tau_d\right\rangle=\mathsf{F}(\mathfrak{G}).\]
In particular, $\widehat{G}=\mathsf{F}(\mathfrak{G})$ if and only if $\mathsf{P}_{\widehat{G}}=\mathsf{P}_{\mathsf{B}}$.

If, in addition, the AF truncations satisfy the parity condition, then \[\mathsf{F}(\mathfrak{G})=\mathsf{S}(\mathfrak{T}_{\mathsf{B}})\widehat{G}\] and \[\bigl[\mathsf{F}(\mathfrak{G}):\widehat{G}\bigr]=\bigl[\mathsf{P}_{\mathsf{B}}:\mathsf{P}_{\widehat{G}}\bigr]<\infty.\]
\end{thmA}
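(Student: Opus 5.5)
The plan has three steps: reduce $\mathsf{F}(\mathfrak{G})$ modulo its AF part, handle the AF part by parity, and then control the intersection under the parity condition.

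\textbf{Step 1: reduction modulo the AF part.} I will show that
\[\mathsf{F}(\mathfrak{G})=\bigl\langle \mathsf{S}(\mathfrak{T}_{\mathsf{B}}),\widehat{G}\bigr\rangle .\]
Take $\gamma\in\mathsf{F}(\mathfrak{G})$. Only finitely many points have germs of $\gamma$ outside $\mathfrak{T}_{\mathsf{B}}$. This follows from the finite singular germ condition, since $\mathfrak{G}\setminus\mathfrak{T}_{\mathsf{B}}$ is discrete and $\Omega(\mathsf{B})$ is compact. Each such germ factors uniquely through one of the finitely many germ-defining singular points $\xi$, as an AF germ composed with an element of the finite group of germs at $\xi$. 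The localization condition then supplies, for each such germ, an element $g\in\widehat{G}$ with the prescribed germ at $\xi$. Moreover, $g$ acts on a sufficiently deep compatible cylinder around $\xi$ as the localized germ, and it agrees with an AF transformation on the complement of a small neighborhood.

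Composing $\gamma$ with suitable AF elements and these $g^{\pm1}$, I remove the non-AF germs one at a time. What remains is an element all of whose germs lie in $\mathfrak{T}_{\mathsf{B}}$, that is, an element of $\mathsf{F}(\mathfrak{T}_{\mathsf{B}})=\mathsf{S}(\mathfrak{T}_{\mathsf{B}})$. The hard part is to make the pieces glue into globally defined homeomorphisms. For this I will choose cylinders deep enough that they are disjoint for distinct singular orbits, and use that the tail groupoid acts transitively enough on level-$n$ cylinders. I expect this gluing to be the main obstacle.

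\textbf{Step 2: the parity quotient.} Since $\mathsf{A}(\mathfrak{T}_{\mathsf{B}})\le\widehat{G}$, the subgroup $\widehat{G}\cap\mathsf{S}(\mathfrak{T}_{\mathsf{B}})$ contains $\mathsf{A}(\mathfrak{T}_{\mathsf{B}})=\ker\varepsilon_{\mathsf{B}}$. Hence it equals $\varepsilon_{\mathsf{B}}^{-1}(\mathsf{P}_{\widehat{G}})$. Choose $\tau_1,\dots,\tau_d\in\mathsf{S}(\mathfrak{T}_{\mathsf{B}})$ whose parities project to a basis of $\mathsf{P}_{\mathsf{B}}/\mathsf{P}_{\widehat{G}}$. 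Then
\[\bigl\langle \widehat{G}\cap\mathsf{S}(\mathfrak{T}_{\mathsf{B}}),\tau_1,\dots,\tau_d\bigr\rangle=\varepsilon_{\mathsf{B}}^{-1}(\mathsf{P}_{\mathsf{B}})=\mathsf{S}(\mathfrak{T}_{\mathsf{B}}).\]
Combining this with Step 1 gives $\langle\widehat{G},\tau_1,\ldots,\tau_d\rangle=\mathsf{F}(\mathfrak{G})$.

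For the equality criterion, first suppose $\mathsf{P}_{\widehat{G}}=\mathsf{P}_{\mathsf{B}}$. Then $d=0$, so $\widehat{G}=\mathsf{F}(\mathfrak{G})$. Conversely, if $\widehat{G}=\mathsf{F}(\mathfrak{G})$, then $\widehat{G}$ contains $\mathsf{S}(\mathfrak{T}_{\mathsf{B}})$. By surjectivity of $\varepsilon_{\mathsf{B}}$ this gives $\mathsf{P}_{\widehat{G}}=\mathsf{P}_{\mathsf{B}}$.

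\textbf{Step 3: the parity condition and the index.} The aim is to show that $\mathsf{S}(\mathfrak{T}_{\mathsf{B}})$ is normalized by $\widehat{G}$, which makes $\mathsf{S}(\mathfrak{T}_{\mathsf{B}})\widehat{G}$ a subgroup. Normalization holds because conjugation by any element of $\mathsf{F}(\mathfrak{G})$ preserves the open subgroupoid $\mathfrak{T}_{\mathsf{B}}$. One must check this preservation: a germ $g\,t\,g^{-1}$ with $t$ an AF germ could be non-AF only at the finitely many singular points. Localization together with the AF truncations shows that it lands in $\mathfrak{T}_{\mathsf{B}}$ after correction. The parity condition is what guarantees that the corrections made in Step 1 can be taken inside $\widehat{G}$ rather than requiring the $\tau_i$. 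Then
\[\mathsf{F}(\mathfrak{G})=\mathsf{S}(\mathfrak{T}_{\mathsf{B}})\widehat{G},\]
and
\[\bigl[\mathsf{F}(\mathfrak{G}):\widehat{G}\bigr]=\bigl[\mathsf{S}(\mathfrak{T}_{\mathsf{B}}):\mathsf{S}(\mathfrak{T}_{\mathsf{B}})\cap\widehat{G}\bigr]=\bigl[\mathsf{P}_{\mathsf{B}}:\mathsf{P}_{\widehat{G}}\bigr],\]
where the last equality uses the identification $\mathsf{S}(\mathfrak{T}_{\mathsf{B}})\cap\widehat{G}=\varepsilon_{\mathsf{B}}^{-1}(\mathsf{P}_{\widehat{G}})$ from Step 2. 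This index is finite because $\mathsf{P}_{\mathsf{B}}$ is finite.
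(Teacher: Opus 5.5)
Your Step 2 and the equality criterion are correct and coincide with the paper's argument (its Corollary~\ref{cor:AF-symmetric-parity-completion}). The genuine gap is Step 3. There $\mathsf{S}(\mathfrak{T}_{\mathsf{B}})$ is \emph{not} normalized by $\widehat{G}$, and conjugation by elements of $\mathsf{F}(\mathfrak{G})$ does not preserve $\mathfrak{T}_{\mathsf{B}}$: the AF core is principal, while $\mathfrak{G}$ has nontrivial isotropy at each $\xi_i$.

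Concretely, let $\widehat{h}\in K_i$ have germ $h\neq1$ at $\xi_i$, and let $t$ be a prefix transposition with $t(\xi_i)\neq\xi_i$. Then $(\widehat{h}t\widehat{h}^{-1},\xi_i)=(\widehat{h}t,\xi_i)\,h^{-1}$, where $(\widehat{h}t,\xi_i)\in\mathfrak{T}_{\mathsf{B}}$. Hence this germ is not AF, and $\widehat{h}t\widehat{h}^{-1}\notin\mathsf{S}(\mathfrak{T}_{\mathsf{B}})$.

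Your sentence that the parity condition lets the corrections be taken inside $\widehat{G}$ states the right goal but gives no mechanism. What is needed is a factorization $g=qr$ with $q\in\mathsf{S}(\mathfrak{T}_{\mathsf{B}})$ and $r\in\widehat{G}$. Then $q(\mathsf{S}(\mathfrak{T}_{\mathsf{B}})\cap\widehat{G})\mapsto q\widehat{G}$ is a bijection onto $\mathsf{F}(\mathfrak{G})/\widehat{G}$, and no normality is required.

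The paper gets $r\in\widehat{G}$ from the AF truncation $a$, which is the identity on the $\widehat{h}$-invariant cylinder $C=[\xi_{i,m}]$ and equals $\widehat{h}$ off $C$. The parity condition gives $\varepsilon_{\mathsf{B}}(a)\in\mathsf{P}_{\widehat{G}}$, so $a\in\widehat{G}$ because $\mathsf{A}(\mathfrak{T}_{\mathsf{B}})\le\widehat{G}$. Hence $h_C=a^{-1}\widehat{h}\in\widehat{G}$ is supported in $C$. Conjugating by a prefix $3$-cycle moves it to $[\delta_j]$. The resulting $r_j\in\widehat{G}$ have disjoint supports, and $q=g(r_1\cdots r_N)^{-1}$ has only AF germs.

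In Step 1, the gluing you anticipate does not arise if you really remove germs one at a time; what must be checked is a count. Let $\omega_j=\alpha_jh_j\beta_j$ be the non-AF germ of $g$ at $x_j$. Take $\widehat{h}_j\in K_{i_j}$ representing $h_j$, and $t_j\in\mathsf{S}(\mathfrak{T}_{\mathsf{B}})$ with $t_j(x_j)=\xi_{i_j}$. Then $c_j=t_j^{-1}\widehat{h}_jt_j$ fixes $x_j$, has germ $\beta_j^{-1}h_j\beta_j$ there (by principality of $\mathfrak{T}_{\mathsf{B}}$), and has only AF germs elsewhere. So $gc_j^{-1}$ has exactly one non-AF germ fewer than $g$, namely its non-AF sources are the points $c_j(x_k)$ with $k\neq j$.

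If you take each $t_j$ to be a prefix $3$-cycle, available at levels where the tiles have at least three vertices, then $t_j\in\mathsf{A}(\mathfrak{T}_{\mathsf{B}})\le\widehat{G}$ and every corrector lies in $\widehat{G}$. Iterating $N$ times gives $g=q\,c_N\cdots c_1$ with $q\in\mathsf{S}(\mathfrak{T}_{\mathsf{B}})$ and all $c_k\in\widehat{G}$. That is, $\mathsf{F}(\mathfrak{G})=\mathsf{S}(\mathfrak{T}_{\mathsf{B}})\widehat{G}$ directly, which repairs Step 3 and yields the index formula.

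As far as I can check, this sequential version uses neither the localization condition nor the parity condition. The paper needs them only because it corrects all singular germs simultaneously, with localized correctors of disjoint support lying in $\widehat{G}$.
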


The theorem gives two consequences. First, the topological full group can be recovered from a group defined recursively by a tile inflation by adjoining finitely many finitary transformations; under the parity condition, the original group is near full. Second, the full-group completion problem is reduced to finitely many local germ calculations and a finite mod-$2$ parity computation. Once the singular germs are localized, there is no additional obstruction outside the AF core, and the remaining difference is measured by $\mathsf{P}_{\mathsf{B}}/\mathsf{P}_{\widehat{G}}$.

As an application, we study a fragmentation group of the modified LMS-group whose tile inflation process is defined over a Bratteli diagram associated with the Penrose tiling. We verify the hypotheses of Theorem~A and identify the resulting group with its topological full group.

\begin{thmB}
Let $\widehat{G}_{\mathrm{LMS}}$ be the fragmentation group of the modified LMS-group constructed in Section~\ref{sec:LMS-modified}, and put $\mathfrak{G}_{\mathrm{LMS}}=\operatorname{Germ}(\widehat{G}_{\mathrm{LMS}}\curvearrowright\Omega(\mathsf{B}))$. Then $\widehat{G}_{\mathrm{LMS}}=\mathsf{F}(\mathfrak{G}_{\mathrm{LMS}})$.
\end{thmB}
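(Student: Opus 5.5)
The plan is to deduce Theorem~B from Theorem~A. The final clause of the first part of Theorem~A gives $\widehat{G}_{\mathrm{LMS}}=\mathsf{F}(\mathfrak{G}_{\mathrm{LMS}})$ as soon as two things hold: the hypotheses of that part are satisfied, and $\mathsf{P}_{\widehat{G}_{\mathrm{LMS}}}=\mathsf{P}_{\mathsf{B}}$. The parity condition on AF truncations is \emph{not} needed for equality, so I would not verify it. The work therefore splits into a hypothesis check and a finite mod-$2$ computation.

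\textbf{Hypotheses of Theorem~A.} I would proceed in the following order, using the description of the tile inflation in Section~\ref{sec:LMS-modified}.
\begin{enumerate}
\item \emph{Setting.} The group $\widehat{G}_{\mathrm{LMS}}$ is finitely generated and of bounded type, by construction from finitely many generators defined by the tile inflation. The Bratteli diagram $\mathsf{B}$ attached to the Penrose substitution is stationary with a primitive incidence matrix. Hence it is simple and has infinite path space.
\item \emph{Finite singular germ condition.} I would list the persistent boundary points of the inflation, which are finitely many since the diagram is stationary. For each, I would compute the pathwise sections of the generators and check that they are eventually periodic. This shows that the germ groups at these points are finite and that every non-AF germ factors uniquely through one of them.
\item \emph{Containment $\mathsf{A}(\mathfrak{T}_{\mathsf{B}})\leq\widehat{G}_{\mathrm{LMS}}$.} I would invoke the sufficient finite-level criterion of the companion paper~\cite{kua26b}. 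Concretely, I would check that on some level the tiles, together with the connectors, generate the full alternating groups on the vertex sets of the finite tiles, compatibly with the inflation.
\item \emph{Localization.} I would apply the localization criterion of this paper. For each singular point, I would exhibit finitely many representatives of its singular germs, built from generators and their short products. Using the periodicity of the sections found in step~2, I would show that these representatives preserve cylinders of arbitrarily large depth around the singular point.
\end{enumerate}

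\textbf{Parity computation.} Next I would compute $\mathsf{P}_{\mathsf{B}}=H_0(\mathfrak{T}_{\mathsf{B}};\mathbb{Z}/2\mathbb{Z})$. It is the direct limit of $(\mathbb{Z}/2\mathbb{Z})^{V_n}$ under the incidence matrices reduced mod~$2$. For the Penrose (Fibonacci-type) matrix the determinant is odd, so the reduced matrix is invertible. Hence $\mathsf{P}_{\mathsf{B}}\cong(\mathbb{Z}/2\mathbb{Z})^{|V|}$, with explicit basis vectors given by the vertices at a fixed level. I then need elements of $\widehat{G}_{\mathrm{LMS}}\cap\mathsf{S}(\mathfrak{T}_{\mathsf{B}})$ whose parity vectors span this group. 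The candidates are finitary elements obtained from generators whose sections become trivial below some level, or suitable products of generators (for example squares, or products of a fragmented generator with its partner) that cancel all non-AF germs at the singular points. For each candidate I would compute $\varepsilon_{\mathsf{B}}$ by counting, on each tile of a fixed level, the parity of the induced permutation. I would then check that the resulting vectors are linearly independent mod~$2$.

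\textbf{Main obstacle.} I expect the hard step to be producing finitary elements of \emph{odd} parity on each vertex class. Elements of $\widehat{G}_{\mathrm{LMS}}$ naturally carry non-AF germs at the boundary points, so one must find combinations that remove those germs while keeping odd parity on a prescribed tile type. My first attempt would be to take a generator acting as a single transposition inside one tile type at some level, with trivial sections elsewhere, if the fragmentation produces one. Failing that, I would take a commutator-free product of two generators whose singular germs coincide and cancel, and compute its parity level by level. Once $\mathsf{P}_{\widehat{G}_{\mathrm{LMS}}}=\mathsf{P}_{\mathsf{B}}$ is established, Theorem~A yields $\widehat{G}_{\mathrm{LMS}}=\mathsf{F}(\mathfrak{G}_{\mathrm{LMS}})$.
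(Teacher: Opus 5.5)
Your plan is correct and matches the paper's proof: Theorem~B follows from the equality criterion of Theorem~A once one verifies $\mathsf{A}(\mathfrak{T}_{\mathsf{B}})\leq\widehat{G}_{\mathrm{LMS}}$, the finite singular germ and localization conditions at the single boundary point $c_2^\omega$, and $\mathsf{P}_{\widehat{G}_{\mathrm{LMS}}}=\mathsf{P}_{\mathsf{B}}$, and, as you say, the parity condition is not needed. You have misplaced the difficulty, though: the parity step is immediate, because the generators $M=(b\,c_2)$ and $S=(a_1\,c_1)(a_2\,b)$ are already finitary, with classes $[(1,0),3]$ and $[(0,1),3]$ spanning $\mathsf{P}_{\mathsf{B}}\cong(\mathbb{Z}/2\mathbb{Z})^2$; the real work in the paper is the level-by-level selector induction for the alternating-group containment, which relies on the finite modification $\tau=L'_0L'_1L'_2$ to supply the level-$2$ selector.
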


The companion paper \cite{kua26b} studies the AF part and gives sufficient finite-level conditions for the containment $\mathsf{A}(\mathfrak{T}_{\mathsf{B}})\leq\widehat{G}$. It also gives examples for which the inclusion is proper and of finite index, including a fragmentation group of the modified Fabrykowski--Gupta group with $\bigl[\mathsf{F}(\mathfrak{G}_{\mathrm{FG}}):\widehat{G}_{\mathrm{FG}}\bigr]=4$. Thus near fullness includes both equality with the topological full group and proper finite-index inclusions. Together, the two papers give sufficient conditions, stated in terms of tile inflation data, for a group of bounded type to be near full or equal to its topological full group.

The paper is organized as follows. Section~\ref{sec:prelim} reviews groupoids of germs, AF-by-discrete groupoids, tile inflations, and the AF symmetric and alternating groups. Section~\ref{sec:fixed-singularity-presentations} introduces the finite singular germ condition, pathwise sections, and shifted groups of germs, and proves the singular factorization. Section~\ref{sec:AF-parity-full-group-completion} defines the AF parity map and the parity completion and proves the main theorem. Section~\ref{sec:LMS-modified} studies the fragmentation group of the modified LMS-group.

\section{Preliminaries}\label{sec:prelim}
In this section, we review all the notions that will be used in this paper. We use left action notations throughout this paper. A (semi)group element $g=s_n\cdots s_1$ is read from right to left, while a path $e_1\cdots e_n$ or $e_1e_2\cdots $ in a Bratteli diagram is read from left to right. 

\subsection{Groupoids of germs and bisections}
\label{subsec:groupoids-of-germs}

We review groupoids of germs of inverse semigroup actions, following \cite[Sections~3.1 and~5.1]{nek22}. Throughout this subsection, $X$ is a compact Hausdorff space.

Let $\mathcal{I}(X)$ be the inverse semigroup of homeomorphisms between open subsets of $X$.  For $s\in\mathcal{I}(X)$, we denote its domain and range by $\operatorname{Dom}(s)$ and $\operatorname{Ran}(s)$, respectively, and its inverse by $s^*$.

\begin{defn}[Inverse semigroup action]
An action of an inverse semigroup $\mathcal{S}$ on $X$ is a homomorphism $\mathcal{S}\to\mathcal{I}(X)$.  We identify every $s\in\mathcal{S}$ with the corresponding partial homeomorphism $s:\operatorname{Dom}(s)\to\operatorname{Ran}(s)$, and assume that the
domains of the elements of $\mathcal{S}$ cover $X$.
\end{defn}

Consider the set of pairs $(s,x)$, where $s\in\mathcal{S}$ and $x\in\operatorname{Dom}(s)$.  We say that $(s,x)$ and $(t,y)$ are equivalent if $x=y$ and there is an open neighborhood $U$ of $x$ such that $U\subseteq\operatorname{Dom}(s)\cap\operatorname{Dom}(t)$ and
$s|_U=t|_U$.  We denote this equivalence class by $(s,x)$.

\begin{defn}[Groupoid of germs]
The groupoid of germs of the action $\mathcal{S}\curvearrowright X$, denoted by
$\operatorname{Germ}(\mathcal{S}\curvearrowright X)$, is the set of all germs $(s,x)$.

The source and range maps are $\mathbf{s}((s,x))=x$ and $\mathbf{r}((s,x))=s(x)$.  Multiplication and inversion are given by
\[(s,t(x))(t,x)=(st,x),\qquad(s,x)^{-1}=(s^*,s(x)).\]
We identify the unit at $x$ with $x$ and write it as $1_x$.
\end{defn}

For $s\in\mathcal{S}$ and an open set $U\subseteq\operatorname{Dom}(s)$, put
$\mathcal{U}(s,U)=\{(s,x):x\in U\}$. These sets form a basis of topology on the groupoid of germs. The restrictions
\[\mathbf{s}|_{\mathcal{U}(s,U)}:\mathcal{U}(s,U)\longrightarrow U\]
and
\[\mathbf{r}|_{\mathcal{U}(s,U)}: \mathcal{U}(s,U)\longrightarrow s(U)\]
are homeomorphisms. Thus the groupoid of germs is étale, although it need not be Hausdorff.

For a topological groupoid $\mathfrak{G}$, we write $\mathfrak{G}^{(0)}$ for its unit space,
$\mathfrak{G}_x=\mathbf{s}^{-1}(x)$, $\mathfrak{G}^x=\mathbf{r}^{-1}(x)$, and
$\mathfrak{G}_x^x=\mathfrak{G}_x\cap\mathfrak{G}^x$. The orbit of $x$ is
$\mathfrak{G}x=\mathbf{r}(\mathfrak{G}_x)$, and $\mathfrak{G}_x^x$ is the isotropy group at $x$. If a group $G$ acts globally on $X$, then $\operatorname{Germ}(G\curvearrowright X)$ is the groupoid above for the inverse semigroup generated by the action of $G$.  In this case, $\mathfrak{G}_x^x\cong G_x/G_{(x)}$, where $G_x=\{g\in G:g(x)=x\}$ and $G_{(x)}$ is the subgroup of elements that fix pointwise a neighborhood of $x$.

We say that $\mathfrak{G}$ is \emph{principal} if every isotropy group is trivial, \emph{effective} if the interior of the isotropy bundle is $\mathfrak{G}^{(0)}$, and minimal if every orbit is dense in $\mathfrak{G}^{(0)}$. A topological groupoid $\mathfrak{G}$ is said to be \emph{étale} if the maps $\mathbf{s},\mathbf{r}:\mathfrak{G}\rightarrow\mathfrak{G}^{(0)}$ are local homeomorphisms. A groupoid of germs is effective by construction.

\begin{defn}[Bisection]
A subset $F\subseteq\mathfrak{G}$ is a bisection if both
$\mathbf{s}|_F$ and $\mathbf{r}|_F$ are injective. If $F$ is open, then
these maps are homeomorphisms onto open subsets of $\mathfrak{G}^{(0)}$, and $F$ induces a partial homeomorphism
\[g_F=\mathbf{r}\circ(\mathbf{s}|_F)^{-1}:\mathbf{s}(F)\longrightarrow\mathbf{r}(F).\]
\end{defn}

If $F_1,F_2$ are bisections, then $F_1F_2=\{\gamma_1\gamma_2:\mathbf{s}(\gamma_1)=\mathbf{r}(\gamma_2)\}$ is a bisection, and so is $F_1^{-1}=\{\gamma^{-1}:\gamma\in F_1\}$. Moreover, $g_{F_1F_2}=g_{F_1}g_{F_2}$.

A topological groupoid $\mathfrak{G}$ is étale if and only if it has a basis of topology consisting of open bisections. An étale groupoid is called \emph{ample} if it has a basis consisting of compact open bisections, which is equivalent to $\mathfrak{G}^{(0)}$ being totally disconnected. In particular, if $X$ is a Cantor space and the inverse semigroup acts by homeomorphisms between clopen sets, then its groupoid of germs is ample.

\begin{defn}[Topological full group]
Let $\mathfrak{G}$ be an étale groupoid with compact unit space $X$. A \emph{full bisection} is an open bisection $F$ satisfying $\mathbf{s}(F)=\mathbf{r}(F)=X$.  The topological full group $\mathsf{F}(\mathfrak{G})$ is the group of homeomorphisms $g_F$ induced by full bisections $F$.
\end{defn}

Since we work with groupoids of germs, a full bisection is determined by its induced homeomorphism. Equivalently, a homeomorphism $g$ of $X$ belongs to $\mathsf{F}(\mathfrak{G})$ if, for every $x\in X$, there is an open neighborhood $U$ of $x$ and an element $s\in\mathcal{S}$ such that $g|_U=s|_U$.

Compactness gives a finite cover by such open sets. If $X$ is a Cantor space, we may refine this cover to a finite clopen partition $X=U_1\sqcup\cdots\sqcup U_k$ such that $g|_{U_i}=s_i|_{U_i}$ for some $s_i\in\mathcal{S}$.

\subsection{Compact generation and Cayley graphs}
\label{subsec:compact-generation-cayley-graphs}

We follow \cite[Section~3.5]{nek22} and \cite[Section~6]{nnek19}.

\begin{defn}[Compact generation]
Let $\mathfrak{G}$ be a locally compact étale groupoid with totally disconnected compact unit
space (an ample groupoid). A compact set $K\subseteq\mathfrak{G}$ generates $\mathfrak{G}$ if every element of $\mathfrak{G}$ is a product of elements of $K\cup K^{-1}$.  We call $\mathfrak{G}$ \emph{compactly generated} if it has a compact generating set.
\end{defn}

Equivalently, an étale groupoid with compact unit space is compactly generated if there is a finite family $\mathcal{S}$ of relatively compact open bisections such that the union of the elements of $\mathcal{S}$ and their inverses generates $\mathfrak{G}$.  In the ample case, we may take the elements of $\mathcal{S}$ to be compact open bisections.

Indeed, if $K$ is a compact generating set, we cover $K$ by finitely many relatively compact open bisections.  Conversely, the union of the closures of finitely many relatively compact bisections is compact and generates the groupoid.

We assume that $\mathcal{S}$ is inverse closed, so $F\in\mathcal{S}$ implies $F^{-1}\in\mathcal{S}$.

\begin{defn}[Cayley graph of a groupoid]
Let $x\in\mathfrak{G}^{(0)}$.  The Cayley graph $\mathfrak{G}(x,\mathcal{S})$ has vertex set
$\mathfrak{G}_x=\mathbf{s}^{-1}(x)$ and root $1_x$.

Let $\gamma\in\mathfrak{G}_x$ and $F\in\mathcal{S}$. If $\mathbf{r}(\gamma)\in\mathbf{s}(F)$, there is a unique $\eta\in F$ satisfying $\mathbf{s}(\eta)=\mathbf{r}(\gamma)$. We add an arrow $\gamma\longrightarrow\eta\gamma$ labeled by $F$.
\end{defn}

Since $F$ is a bisection, there is at most one outgoing and at most one
incoming arrow with a given label at every vertex. Since $\mathcal{S}$ generates $\mathfrak{G}$, the graph $\mathfrak{G}(x,\mathcal{S})$ is connected. We use the usual path metric $d_{\mathcal{S}}$ on this graph and denote the ball of radius $R$ centered
at $\gamma$ by $B_{\mathcal{S}}(\gamma,R)$.

The distance from the root to $\gamma$ is the least $k$ such that $\gamma=\eta_k\cdots\eta_1$, where every $\eta_j$ belongs to an element of $\mathcal{S}$.

If $\gamma:x\to y$ is an arrow of $\mathfrak{G}$, then right multiplication
defines an isomorphism
\[\mathfrak{G}(y,\mathcal{S})\longrightarrow\mathfrak{G}(x,\mathcal{S}),
        \qquad
        \delta\longmapsto\delta\gamma.\]
This isomorphism sends the root $1_y$ to the vertex $\gamma$. Thus the unrooted Cayley graph depends only on the orbit, while different points in the orbit determine different roots.

We also consider the orbital graph $\Gamma(x,\mathcal{S})$.  Its vertex set is the orbit
$\mathfrak{G}x$, and for $F\in\mathcal{S}$ we add an arrow from $y\in\mathbf{s}(F)$ to $g_F(y)$ labeled by $F$.

\begin{prop}
\label{prop:cayley-to-orbital-covering}
The range map
\[\mathbf{r}:\mathfrak{G}(x,\mathcal{S})\longrightarrow\Gamma(x,\mathcal{S})\]
is a covering of labeled directed graphs.  The isotropy group $\mathfrak{G}_x^x$ acts freely on $\mathfrak{G}(x,\mathcal{S})$ by right multiplication, and the quotient is
$\Gamma(x,\mathcal{S})$.
\end{prop}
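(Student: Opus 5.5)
The plan is to verify the covering property directly from the definitions of the two graphs and then identify the quotient by the isotropy action.

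First, $\mathbf{r}$ maps the vertex set $\mathfrak{G}_x$ onto the orbit $\mathfrak{G}x=\mathbf{r}(\mathfrak{G}_x)$. It also sends arrows to arrows with the same label. If $\gamma\to\eta\gamma$ is an arrow labeled $F$, then $\eta\in F$ and $\mathbf{s}(\eta)=\mathbf{r}(\gamma)=:y\in\mathbf{s}(F)$. Since $F$ is a bisection, $\eta=(\mathbf{s}|_F)^{-1}(y)$, so
\[\mathbf{r}(\eta\gamma)=\mathbf{r}(\eta)=g_F(y).\]
This is exactly the target of the $F$-labeled arrow at $y$ in $\Gamma(x,\mathcal{S})$. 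Hence $\mathbf{r}$ is a morphism of labeled directed graphs.

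Next we check the local bijectivity at each vertex $\gamma$ with $y=\mathbf{r}(\gamma)$.
\begin{itemize}
\item For outgoing arrows: an $F$-labeled arrow leaves $\gamma$ if and only if $y\in\mathbf{s}(F)$, which holds if and only if an $F$-labeled arrow leaves $y$ in $\Gamma$. In each graph there is at most one such arrow.
\item For incoming arrows: an $F$-arrow enters $\gamma$ if and only if $\gamma=\eta\delta$ with $\eta\in F$, $\mathbf{r}(\eta)=y$. Then $\delta=\eta^{-1}\gamma$ is determined, because $\mathbf{r}|_F$ is injective. Such an arrow exists if and only if $y\in\mathbf{r}(F)$, which is exactly the condition for an $F$-arrow to enter $y$ in $\Gamma$.
\end{itemize}
So $\mathbf{r}$ restricts to a bijection on the edges incident at each vertex, with labels and orientations preserved. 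Since $\Gamma$ is connected (it is the image of the connected Cayley graph), $\mathbf{r}$ is a covering.

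For the action, $h\in\mathfrak{G}_x^x$ acts by $\gamma\mapsto\gamma h$. This is well defined because $\mathbf{s}(\gamma h)=\mathbf{s}(h)=x$. It preserves arrows, since $(\eta\gamma)h=\eta(\gamma h)$ and $\mathbf{r}(\gamma h)=\mathbf{r}(\gamma)$, so the label condition is unchanged. It is free: $\gamma h=\gamma$ forces $h=1_x$ by cancellation in the groupoid.

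Finally we identify the quotient. The map $\mathbf{r}$ is constant on orbits of the action, since $\mathbf{r}(\gamma h)=\mathbf{r}(\gamma)$. Conversely, if $\mathbf{r}(\gamma)=\mathbf{r}(\gamma')$, then $h=\gamma^{-1}\gamma'\in\mathfrak{G}_x^x$ and $\gamma'=\gamma h$. So the fibres of $\mathbf{r}$ on vertices are exactly the $\mathfrak{G}_x^x$-orbits. The same argument applies to arrows: two arrows with the same label and the same image are related by a unique $h$, determined by their sources. Therefore the quotient graph is $\Gamma(x,\mathcal{S})$.

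There is no real obstacle in this argument. The only point needing care is the incoming-arrow count, which is where injectivity of $\mathbf{r}|_F$ is used, as opposed to injectivity of $\mathbf{s}|_F$ for outgoing arrows.
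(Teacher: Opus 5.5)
Your proposal is correct and follows essentially the same route as the paper's proof. Both show that $\mathbf{r}$ is a label-preserving bijection on the arrows at each vertex, that right multiplication by $\mathfrak{G}_x^x$ is free, and that the fibres of $\mathbf{r}$ are the isotropy orbits via $h=\gamma^{-1}\gamma'$; your separate treatment of incoming arrows (using injectivity of $\mathbf{r}|_F$) and your arrow-level quotient check make explicit what the paper states in one line.
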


\begin{proof}
Let $\gamma\in\mathfrak{G}_x$ and put $y=\mathbf{r}(\gamma)$. For every $F\in\mathcal{S}$ defined at $y$, there is a unique $\eta\in F$ with $\mathbf{s}(\eta)=y$.  The $F$-labeled arrow at $\gamma$ is $\gamma\to\eta\gamma$, and its image under $\mathbf{r}$ is the
$F$-labeled arrow $y\to\mathbf{r}(\eta)$. Thus $\mathbf{r}$ induces a bijection on the incoming and outgoing labeled arrows at every vertex.

For $h\in\mathfrak{G}_x^x$, the map $\gamma\mapsto\gamma h$ preserves labels and ranges, and its action is free. If $\mathbf{r}(\gamma_1)=\mathbf{r}(\gamma_2)$, then $h=\gamma_1^{-1}\gamma_2\in\mathfrak{G}_x^x$ and $\gamma_1h=\gamma_2$. Hence the isotropy orbits are precisely the fibers of the range map.
\end{proof}

\begin{rmk}
For a group action on $X$, the vertices of $\mathfrak{G}(x,\mathcal{S})$ are the germs $(g,x)$.  Thus this Cayley graph is the graph of germs at $x$, while $\Gamma(x,\mathcal{S})$ is the usual orbital graph. The deck transformation group is the group of germs $\mathfrak{G}_x^x$.
\end{rmk}

\subsection{AF groupoids}\label{subsec:AF-groupoids-Bratteli-diagrams}
\begin{defn}\label{brat}
A \emph{Bratteli diagram}
$\mathsf{B}=\bigl((V_n)_{n\geq1},(E_n)_{n\geq1},\mathbf{s},\mathbf{r}\bigr)$ consists of sequences of finite sets $(V_n)_{n\geq1}$ and
$(E_n)_{n\geq1}$ and surjective maps
$\mathbf{s}_n:E_n\longrightarrow V_n$ and $\mathbf{r}_n:E_n\longrightarrow V_{n+1}$. The sets $\bigsqcup_{n\geq1}V_n$ and
$\bigsqcup_{n\geq1}E_n$ are called, respectively, the \emph{vertex} set and the \emph{edge} set of
$\mathsf{B}$. The maps $\mathbf{s}_n$ and $\mathbf{r}_n$ are called the \emph{source} and \emph{range} maps. We write $\mathbf{s}$ and $\mathbf{r}$ when no
ambiguity arises.

We denote by $\Omega(\mathsf{B})$ the space of infinite paths beginning
in $V_1$, by $\Omega_n(\mathsf{B})$ the set of paths beginning in $V_1$
and ending in $V_{n+1}$, and by
$\Omega^*(\mathsf{B})=
\bigcup_{n\geq1}\Omega_n(\mathsf{B})$
the set of finite paths. A finite path is a sequence $e_1e_2\cdots e_n$ satisfying $\mathbf{r}(e_i)=\mathbf{s}(e_{i+1})$ for $1\leq i<n$.

\end{defn}
The space $\Omega(\mathsf{B})$ is a closed subset with the subset topology of the direct product $\prod\limits_{n=1}^{\infty}E_n$. It is compact, totally disconnected, and metrizable.

For $\gamma\in\Omega_n(\mathsf{B})$, we denote by $[\gamma]$ the cylinder
of infinite paths beginning with $\gamma$.  If $\gamma\in\Omega_n(\mathsf{B})$ ends at $v\in V_{n+1}$, we write $\Omega_v^{(n)}(\mathsf{B})$ for the space of infinite paths beginning at
$v$ in level $n+1$. Thus $[\gamma]=\{\gamma z:z\in\Omega_v^{(n)}(\mathsf{B})\}$.

\begin{defn}[Tail groupoid]
\label{def:tail-groupoid}
Let $x=e_1e_2\ldots$ and $y=f_1f_2\ldots$ belong to
$\Omega(\mathsf{B})$.  We say that $x$ and $y$ are tail equivalent if
there exists $n$ such that $e_k=f_k$ for every $k>n$.

The tail groupoid of $\mathsf{B}$ is
\[\mathfrak{T}_{\mathsf{B}}=\left\{(x,y)\in\Omega(\mathsf{B})\times\Omega(\mathsf{B}):x\text{ and }y\text{ are tail equivalent}\right\}.\]
The source and range maps are
$\mathbf{s}(x,y)=y$ and $\mathbf{r}(x,y)=x$, and
\[(x,y)(y,z)=(x,z),
    \qquad
(x,y)^{-1}=(y,x).\]
\end{defn}

Let $\alpha,\beta\in\Omega_n(\mathsf{B})$ satisfy
$\mathbf{r}(\alpha)=\mathbf{r}(\beta)$.  We define
\[Z(\alpha,\beta)=
        \left\{(\alpha z,\beta z):z\in\Omega_{\mathbf{r}(\alpha)}^{(n)}(\mathsf{B})\right\}.\]
The set $Z(\alpha,\beta)$ is a compact open bisection with source $[\beta]$ and range $[\alpha]$. The bisections $Z(\alpha,\beta)$ and their restrictions to clopen subsets form a basis for the topology of $\mathfrak{T}_{\mathsf{B}}$.

For $n\geq1$, put
\[\mathfrak{T}_{\mathsf{B},n}=\left\{(x,y)\in\mathfrak{T}_{\mathsf{B}}:x_k=y_k\text{ for every }k>n\right\}.\]
Then $\mathfrak{T}_{\mathsf{B},n}$ is a compact open principal subgroupoid
with unit space $\Omega(\mathsf{B})$, and
\[\mathfrak{T}_{\mathsf{B},1}\subseteq \mathfrak{T}_{\mathsf{B},2}\subseteq\cdots,
\qquad
\mathfrak{T}_{\mathsf{B}}= \bigcup_{n\geq1}\mathfrak{T}_{\mathsf{B},n}.\]

\begin{defn}[AF groupoid and dimension groups]
\label{def:AF-groupoid}
Let $\mathfrak{H}$ be an ample groupoid with compact unit space. We call $\mathfrak{H}$ an AF groupoid if there is an increasing sequence $\mathfrak{H}_1\subseteq\mathfrak{H}_2\subseteq\cdots$ of compact open principal subgroupoids with unit space $\mathfrak{H}^{(0)}$, finite orbits, and $\mathfrak{H}=\bigcup_{n\geq1}\mathfrak{H}_n$.
\end{defn}

The preceding description shows that $\mathfrak{T}_{\mathsf{B}}$ is an AF groupoid. Conversely, every AF groupoid with Cantor unit space is isomorphic to the tail groupoid of a
Bratteli diagram. See \cite[Proposition~5.2.5]{nek22}. 

\begin{defn}[Simple Bratteli diagram]
\label{def:simple-Bratteli-diagram}
We call $\mathsf{B}$ simple if, for every $n$, there exists $m>n$ such that every vertex of $V_n$ is connected by a finite path to every vertex of $V_m$.
\end{defn}

Equivalently, if $M_k$ denotes the incidence matrix (see the beginning of the next subsection) between levels $k$ and $k+1$, then $\mathsf{B}$ is simple if, for every $n$, there exists $m>n$ such that every entry of $M_{m-1}\cdots M_n$ is positive. A proof of the following proposition can be found in \cite[Subsection~2.3]{Putnam10}. 

\begin{prop}
\label{prop:simple-minimal-tail-groupoid}
The tail groupoid $\mathfrak{T}_{\mathsf{B}}$ is minimal if and only if $\mathsf{B}$ is simple. If $\mathsf{B}$ is simple and $\Omega(\mathsf{B})$ is infinite, then $\Omega(\mathsf{B})$ is a Cantor space.
\end{prop}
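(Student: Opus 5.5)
The statement splits into two claims: $\mathfrak{T}_{\mathsf{B}}$ is minimal exactly when $\mathsf{B}$ is simple, and $\Omega(\mathsf{B})$ is a Cantor space when $\mathsf{B}$ is simple with infinite path space. We work throughout with cylinder sets, since the cylinders $[\gamma]$, $\gamma\in\Omega^*(\mathsf{B})$, form a basis of clopen sets of $\Omega(\mathsf{B})$. The orbit of $x=e_1e_2\cdots$ under $\mathfrak{T}_{\mathsf{B}}$ is its tail equivalence class. Hence the orbit of $x$ is dense if and only if, for every finite path $\gamma$ ending at some $v\in V_{n+1}$, some path $\gamma\beta e_{m}e_{m+1}\cdots$ lies in the orbit. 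Concretely, this means that for some $m>n$ there is a finite path from $v$ to $\mathbf{s}(e_m)=\mathbf{r}(e_{m-1})\in V_m$.

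\emph{Simple implies minimal.} Fix $x$ and $\gamma$ as above. By simplicity there is $m>n+1$ such that every vertex of $V_{n+1}$ is joined by a path to every vertex of $V_m$. In particular $v$ is joined by some path $\beta$ to $\mathbf{r}(e_{m-1})$. The path $\gamma\beta e_me_{m+1}\cdots$ is then tail equivalent to $x$ and lies in $[\gamma]$.

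\emph{Minimal implies simple.} This is the delicate direction, and I would argue by contraposition. Suppose simplicity fails at level $n$. Then for every $m>n$ there exist $u_m\in V_n$ and $w_m\in V_m$ such that $w_m$ is not reachable from $u_m$. Since $V_n$ is finite, we can pass to one vertex $u\in V_n$ that occurs as $u_m$ for infinitely many $m$. For each $m$, let $W_m\subseteq V_m$ be the set of vertices not reachable from $u$. If $w\in V_{m+1}$ is not reachable from $u$, then neither is any source of an edge ending at $w$. Therefore the nonempty sets $W_m$, for $m\ge n$ large, are compatible under taking predecessors, which is exactly the structure needed for König's lemma.

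To apply it, consider the tree of finite paths starting in $V_1$ whose vertices at each level $m\geq n$ lie in $W_m$. Each $W_m$ is nonempty, and every vertex has an incoming edge because the range maps are surjective, so we can trace back to $V_1$ and see that this tree has paths of every length. It is finitely branching, so König's lemma yields an infinite path $x$ whose vertex at every level $m\geq n$ lies outside the reachable set of $u$. Now choose any finite path $\gamma$ ending at $u$, which exists because the range maps are surjective. No path tail equivalent to $x$ can enter $[\gamma]$: such a path would pass through $u$ and later join the tail of $x$, making some vertex of $x$ reachable from $u$. So the orbit of $x$ is not dense, and $\mathfrak{T}_{\mathsf{B}}$ is not minimal. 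Securing the nonemptiness of the sets $W_m$ at all large levels is the main obstacle; it follows from the observation that reachability from $u$ is inherited by successors.

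\emph{Cantor space.} $\Omega(\mathsf{B})$ is already compact, metrizable and totally disconnected, so it remains to show that it has no isolated points. Suppose some $x$ were isolated. Then some cylinder $[\gamma]$ would equal $\{x\}$. By minimality, every orbit meets $[\gamma]$, so every point is tail equivalent to $x$. Every orbit of $\mathfrak{T}_{\mathsf{B}}$ is countable, so $\Omega(\mathsf{B})$ would be countable. It would also be infinite and compact, and by the Baire category theorem it would therefore have a dense set of isolated points. Those points would all lie in the single orbit of $x$ and would be isolated. Hence the orbit of $x$ would be open and dense, and in fact every point would be isolated, since points of one orbit are moved into each other by the homeomorphisms $g_{Z(\alpha,\beta)}$ coming from the bisections $Z(\alpha,\beta)$. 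A compact space in which every point is isolated is finite, contradicting the assumption that $\Omega(\mathsf{B})$ is infinite.
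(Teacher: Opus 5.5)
Your argument is correct. The paper does not prove this proposition; it only refers to Putnam's notes (Subsection~2.3 there). What you have written is therefore a self-contained substitute for that citation rather than a variant of an argument in the paper.

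What your route buys is transparency about which parts of the paper's definition of a Bratteli diagram are actually used:
\begin{itemize}
\item finiteness of the levels, for the pigeonhole choice of $u$ and for K\"onig's lemma (which could equally be replaced by compactness of $\Omega(\mathsf{B})$ applied to the nested nonempty closed sets of paths whose vertices at levels $n,\ldots,M$ avoid the set reachable from $u$);
\item surjectivity of the range maps, to trace unreachable vertices back to $V_1$;
\item surjectivity of the source maps, which you use silently to know that the cylinder $[\gamma]$ for a path $\gamma$ ending at $u$ is nonempty.
\end{itemize}

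Two small points are worth tidying.

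First, if simplicity fails at $n=1$, there is no path of positive length ending at $u$. Either read $[\gamma]$ as the clopen set of paths starting at $u$, or observe that failure at level $n$ implies failure at level $n+1$ (every vertex has an outgoing edge), so you may assume $n\geq2$.

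Second, in the Cantor-space part, the countability and Baire-category step is unnecessary. Once minimality shows that every point $y$ is tail equivalent to the isolated point $x$, write $y=\alpha z$ and $x=\beta z$. Then $g_{Z(\alpha,\beta)}\colon[\beta]\to[\alpha]$ is a homeomorphism between clopen sets sending $x$ to $y$. So every point is isolated, and the compact space $\Omega(\mathsf{B})$ is finite, a contradiction.
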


For $v\in V_n$ and $w\in V_{n+1}$, let
\[m_{w,v}^{(n)}=\left|\left\{e\in E_n:\mathbf{s}(e)=v,\mathbf{r}(e)=w\right\}\right|.\]
We denote by $M_n$ the \emph{incidence} or \emph{adjacency matrix} $(m_{w,v}^{(n)})_{w\in V_{n+1},\,v\in V_n}$, viewed as a homomorphism $M_n:\mathbb{Z}^{V_n}\to\mathbb{Z}^{V_{n+1}}$.

\begin{defn}[Dimension group]
\label{def:dimension-group}
Following \cite[Example~1.3.12]{nek22}, the \emph{dimension group} of $\mathsf{B}$ is the ordered abelian group $\mathsf{D}(\mathsf{B})=\varinjlim\left(\mathbb{Z}^{V_n},M_n\right)$, with positive cone $\mathsf{D}(\mathsf{B})^+=\varinjlim\left(\mathbb{Z}_{\geq0}^{V_n},M_n\right)$.
\end{defn}

Let $h_1\in\mathbb{Z}^{V_1}$ be the vector all of whose entries are $1$,
and define $h_{n+1}=M_nh_n$.  Thus $h_n(v)$ is the number of finite paths
starting in $V_1$ and ending at $v\in V_n$. The order unit of $\mathsf{D}(\mathsf{B})$ is represented at every level by $h_n$. The next proposition follows from \cite[Theorem~2.9]{Putnam10} and \cite[Theorems~4.10 and~4.11]{Matui12}. 

\begin{prop}
\label{prop:tail-groupoid-dimension-group}
We have an isomorphism of ordered abelian groups with order unit $H_0(\mathfrak{T}_{\mathsf{B}};\mathbb{Z})\cong\mathsf{D}(\mathsf{B})$. More generally, for every abelian group $A$, we have $H_0(\mathfrak{T}_{\mathsf{B}};A)\cong\varinjlim\left(A^{V_n},M_n\right)$.
\end{prop}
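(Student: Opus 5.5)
The last stated result is Proposition~\ref{prop:tail-groupoid-dimension-group}: $H_0(\mathfrak{T}_{\mathsf{B}};\mathbb{Z})\cong\mathsf{D}(\mathsf{B})$ as ordered groups with order unit, and $H_0(\mathfrak{T}_{\mathsf{B}};A)\cong\varinjlim(A^{V_n},M_n)$ for every abelian group $A$. I would prove it by reducing to the finite stages $\mathfrak{T}_{\mathsf{B},n}$ and using that groupoid homology commutes with increasing unions of open subgroupoids.

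For the integral case I would use the concrete model of $H_0$. For an ample groupoid, $H_0(\mathfrak{H};A)$ is the quotient of $C_c(\mathfrak{H}^{(0)},A)$ by the subgroup generated by $\mathbf{1}_{\mathbf{s}(F)}\otimes a-\mathbf{1}_{\mathbf{r}(F)}\otimes a$, where $F$ runs over compact open bisections and $a\in A$; equivalently, it is the cokernel of $\mathbf{s}_*-\mathbf{r}_*$ on $C_c(\mathfrak{H},A)$.
\begin{enumerate}
\item Fix the level $n$. The groupoid $\mathfrak{T}_{\mathsf{B},n}$ is the disjoint union, over $v\in V_{n+1}$, of the pair groupoids on the finite sets of paths ending at $v$, each multiplied by the space $\Omega^{(n)}_v(\mathsf{B})$. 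The unit space is partitioned into the clopen sets $X_v=\bigsqcup_{\mathbf{r}(\alpha)=v}[\alpha]$.
\item Every compactly supported $A$-valued function on the unit space is locally constant. After refining levels, such a function is a finite combination of $\mathbf{1}_{[\alpha]}\otimes a$ with $\alpha\in\Omega_m(\mathsf{B})$, $m\geq n$.
\item The bisections $Z(\alpha,\beta)$ identify $\mathbf{1}_{[\alpha]}\otimes a$ with $\mathbf{1}_{[\beta]}\otimes a$ whenever $\mathbf{r}(\alpha)=\mathbf{r}(\beta)$. So the class of $\mathbf{1}_{[\alpha]}\otimes a$ depends only on $\mathbf{r}(\alpha)$ and $a$.
\item This gives a surjection $A^{V_{m+1}}\to H_0$ at each level. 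Decomposing $[\alpha]=\bigsqcup_e[\alpha e]$ shows that these maps are compatible with the connecting maps $M_m$.
\end{enumerate}
The key step is checking that the relations are exactly those coming from the direct system. I would do this by building an inverse map from the direct limit to $H_0$ at level $m$ and checking that it kills $\mathbf{1}_{\mathbf{s}(F)}\otimes a-\mathbf{1}_{\mathbf{r}(F)}\otimes a$. Any compact open bisection $F$ lies in some $\mathfrak{T}_{\mathsf{B},m}$, by compactness and since the $\mathfrak{T}_{\mathsf{B},m}$ are open and increasing. After refining, $F$ is a finite disjoint union of pieces of the form $Z(\alpha,\beta)$. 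Each such piece contributes the same vector $a\,\delta_{\mathbf{r}(\alpha)}$ on both sides, so these relations map to zero in the limit.

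It remains to compare orders. The positive cone in $H_0(\mathfrak{T}_{\mathsf{B}};\mathbb{Z})$ consists of the classes of nonnegative functions, which are sums of $\mathbf{1}_{[\alpha]}$; these map onto $\varinjlim\mathbb{Z}^{V_n}_{\geq0}$. The class of $\mathbf{1}_{\Omega(\mathsf{B})}$ is the sum over paths $\alpha\in\Omega_{n-1}(\mathsf{B})$ of $\delta_{\mathbf{r}(\alpha)}$. This sum equals $h_n$, so the order units match.

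I expect the main obstacle to be injectivity, namely that no extra relations arise in $H_0$ beyond stabilization under the $M_n$. The well-defined inverse map described above handles this. Alternatively, one can cite that homology of an increasing union of open subgroupoids is the colimit of their homologies, and that $H_0$ of an elementary groupoid (a pair groupoid times a space) is the set of functions on the orbit quotient. This is the route taken in \cite[Theorems~4.10 and~4.11]{Matui12}, and \cite[Theorem~2.9]{Putnam10} handles the order structure.
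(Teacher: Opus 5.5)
Your argument is correct, but it is not the paper's route. The paper gives no proof: it simply cites Putnam (Theorem~2.9) and Matui (Theorems~4.10 and~4.11). In other words, it relies on the general facts in your last paragraph, namely continuity of groupoid homology along $\mathfrak{T}_{\mathsf{B}}=\bigcup_n\mathfrak{T}_{\mathsf{B},n}$ and $H_0$ of an elementary groupoid being functions on its orbit space.

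Your main argument instead computes $H_0$ directly from the cokernel model using two mutually inverse maps:
\[
\Phi\colon[x,m+1]\mapsto\Bigl[\textstyle\sum_v x_v\mathbf{1}_{[\alpha_v]}\Bigr],\qquad
\Psi\colon\textstyle\sum_\alpha a_\alpha\mathbf{1}_{[\alpha]}\mapsto\Bigl[\textstyle\sum_\alpha a_\alpha\delta_{\mathbf{r}(\alpha)},m+1\Bigr].
\]
This is self-contained and handles every coefficient group $A$ at once. It also makes explicit that the class of $\mathbf{1}_{[\alpha]}$ is $\delta_{\mathbf{r}(\alpha)}$, which is exactly how the paper later uses $\mathsf{P}_{\mathsf{B}}$ to define $\varepsilon_{\mathsf{B}}$ through level-wise parity vectors. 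The cited route gives more, such as vanishing of higher homology and the identification with $K_0$, but none of that is needed here.

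Two expository slips should be fixed.

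First, the level-$m$ maps $A^{V_{m+1}}\to H_0$ are not individually surjective. Only the induced map from the direct limit is, because a given function is constant on cylinders only from some level on. Relatedly, the map $A^{V_{m+1}}\to A^{V_{m+2}}$ is $M_{m+1}$, not $M_m$.

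Second, the map that must kill $\mathbf{1}_{\mathbf{s}(F)}\otimes a-\mathbf{1}_{\mathbf{r}(F)}\otimes a$ is $\Psi$. It goes from $C(\Omega(\mathsf{B}),A)$ to the direct limit, not from the direct limit to $H_0$. It is well defined because refining $[\alpha]=\bigsqcup_e[\alpha e]$ corresponds to applying the connecting matrix.

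Once $\Psi$ descends to $H_0$, your Step~3 gives $\Phi\Psi=\mathrm{id}$, and $\Psi\Phi=\mathrm{id}$ is immediate. Both maps visibly preserve the positive cones, and $[\mathbf{1}_{\Omega(\mathsf{B})}]$ corresponds to $h_n$, so the ordered, order-unit statement follows as you describe.
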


In particular, $H_0\bigl(\mathfrak{T}_{\mathsf{B}};\mathbb{Z}/2\mathbb{Z}\bigr)\cong\varinjlim \left((\mathbb{Z}/2\mathbb{Z})^{V_n},\overline{M}_n\right)$, where $\overline{M}_n$ is the reduction of $M_n$ modulo $2$.

\subsection{AF-by-discrete groupoids and tile inflations}
\label{subsec:AF-by-discrete}

We follow \cite[Subsections~5.2.4 and~5.2.6]{nek22} and \cite[Subsections~2.1 and~2.2]{kua26}.  Let $\mathsf{B}$ be a Bratteli diagram and let $\mathfrak{T}_{\mathsf{B}}$ be its tail groupoid.

\begin{defn}\cite[Subsection~5.2.4]{nek22}
\label{def:AF-by-discrete-groupoid}
Let $\mathfrak{G}$ be an étale groupoid with unit space $\Omega(\mathsf{B})$.  We say that $\mathfrak{G}$ is \emph{AF-by-discrete over $\mathsf{B}$} if $\mathfrak{T}_{\mathsf{B}}$ is an open subgroupoid of $\mathfrak{G}$ and $\mathfrak{G}\setminus\mathfrak{T}_{\mathsf{B}}$ is discrete. We call $\mathfrak{T}_{\mathsf{B}}$ the \emph{AF core} of $\mathfrak{G}$.
\end{defn}

We recall how a tile inflation process produces such groupoids. Recall the definition. 

\begin{defn}\cite[Definition~2.1]{kua26}\label{def:graphs-with-bdd}
A \textit{graph with boundary} $\Gamma$ consists of a set of \textit{vertices} $V(\Gamma)$, a set of \textit{edges} $E(\Gamma)$, two partially defined maps
\[\mathsf{s}:E(\Gamma)\dashrightarrow V(\Gamma),\] 
\[\mathsf{r}:E(\Gamma)\dashrightarrow V(\Gamma),\]
and a map
\[E(\Gamma)\rightarrow E(\Gamma),\text{    } e\mapsto e^{-1},\] which satisfy the conditions: 
\begin{enumerate}
\item each $e\in E(\Gamma)$ belongs to at least one of the domains of $\mathsf{s}$ and $\mathsf{r}$;

\item for each $e\in E(\Gamma)$ we have $(e^{-1})^{-1}=e$, $e^{-1}\neq e$ and $\mathsf{s}(e)=\mathsf{r}(e^{-1})$ or $\mathsf{r}(e)=\mathsf{s}(e^{-1})$. 
\end{enumerate}
The maps $\mathsf{s},\mathsf{r}$ are called respectively \textit{source} and \textit{range} maps, which are defined on subsets of $E(\Gamma)$. For each $e\in E(\Gamma)$, the edge $e^{-1}$ is called the \textit{inverse} edge of $e$. 
\end{defn}

Fix a finite symmetric set of labels $\mathcal{S}$, with involution map $\mathcal{S}\rightarrow\mathcal{S}$ given by $F\mapsto F^{-1}$ such that $(F^{-1})^{-1}=F$.  A level-$n$ tile $\mathcal{T}_{v,n}$, where $v\in V_{n+1}$, is a finite well-labeled graph with boundary whose vertex set is $V(\mathcal{T}_{v,n})=\{\gamma\in\Omega_n(\mathsf{B}):\mathbf{r}(\gamma)=v\}$. An edge $e$ of $\mathcal{T}_{v,n}$ is called a \emph{boundary edge} if exactly one of $\mathsf{s}(e)$ and $\mathsf{r}(e)$ is defined. A vertex $\gamma\in\mathcal{T}_{v,n}$ is
called a \emph{boundary vertex} if it is the defined endpoint of a boundary edge. More precisely, if $e$ is labeled by $F\in \mathcal{S}$, $\mathsf{s}(e)=\gamma$, and $\mathsf{r}(e)$ is undefined, then we call $e$ an outgoing boundary edge labeled by $F$ at $\gamma$. Its inverse
edge $e^{-1}$ is an incoming boundary edge labeled by $F^{-1}$ at $\gamma$.  Similarly, if $\mathsf{r}(e)=\gamma$ and $\mathsf{s}(e)$ is undefined, then $e$ is an incoming boundary edge at $\gamma$.

\begin{rmk}[Source and range notations]
We use $\mathsf{s}$ and $\mathsf{r}$ for the partially defined source and range maps of a graph with boundary, and $\mathbf{s}$ and $\mathbf{r}$ for the source and range maps of a groupoid or a Bratteli diagram. The two uses of $\mathbf{s}$ and $\mathbf{r}$ will be distinguished in context.
\end{rmk}

We assume that all finite tiles are connected.

Suppose that the level-$n$ tiles have been constructed, and let $v\in V_{n+2}$.  For every edge $e\in E_{n+1}$ with $\mathbf{r}(e)=v$, we take a copy $\mathcal{T}_{\mathbf{s}(e),n}e$ of $\mathcal{T}_{\mathbf{s}(e),n}$, whose vertices are the paths $\gamma e$.

A level-$(n+1)$ \emph{connector} is a triple $(\gamma_1e_1,\gamma_2e_2,F)$, where the boundary edges at $\gamma_1e_1$ and $\gamma_2e_2$ are \emph{compatible}, that is, they are labeled by the same $F$ and $F^{-1}$, respectively.  For every connector, we replace the two
compatible boundary edges by arrows
\[\gamma_1e_1\xrightarrow{\ F\ }\gamma_2e_2,
\qquad
\gamma_2e_2\xrightarrow{\ F^{-1}\ }\gamma_1e_1.\]
The resulting graph with boundary is $\mathcal{T}_{v,n+1}$. Boundary
edges that are not used in a connector remain boundary edges at the next
level. Non-boundary vertices do not produce boundary vertices under
inflation.

For $\gamma=e_1e_2\ldots\in\Omega(\mathsf{B})$, let $\gamma_n=e_1\ldots e_n$.  The embeddings
$\eta\mapsto\eta e_{n+1}$ identify $\mathcal{T}_{\mathbf{r}(\gamma_n),n}$ with a subgraph of
$\mathcal{T}_{\mathbf{r}(\gamma_{n+1}),n+1}$. Their inductive limit is the infinite tile $\mathcal{T}_{\gamma}$. We call $\xi\in\Omega(\mathsf{B})$ a boundary point of an infinite tile if $\xi_n$ is a boundary vertex for every $n$.

For $F\in\mathcal{S}$ and $\gamma\in\Omega(\mathsf{B})$, suppose that the infinite tile containing $\gamma$ has an internal outgoing edge labeled by $F$.  Since the tile is well-labeled, this edge is unique, and we define $F(\gamma)$ to be its range.  The same edge occurs for every path with a sufficiently long common prefix with $\gamma$, so the domain
of $F$ is open and $F$ is a partial homeomorphism.

We assume that the actions of the labels extend continuously at the required boundary points so that every $F\in\mathcal{S}$ has clopen domain and range.  The labels then generate an inverse semigroup $G_0=\langle\mathcal{S}\rangle$ acting on $\Omega(\mathsf{B})$.  We use
the same symbol $F$ for the compact open bisection of germs $\{(F,x):x\in\operatorname{Dom}(F)\}$ and put
\[\mathfrak{G}=\operatorname{Germ}\bigl(G_0\curvearrowright\Omega(\mathsf{B})
        \bigr).\]
Thus the groupoid associated with the tile inflation process is the groupoid of germs of the inverse semigroup defined by its labels.

\begin{prop}
\label{prop:tile-inflation-AF-core}
The tail groupoid $\mathfrak{T}_{\mathsf{B}}$ is an open subgroupoid of $\mathfrak{G}$.  Hence, if $\mathfrak{G}\setminus\mathfrak{T}_{\mathsf{B}}$ is discrete, then $\mathfrak{G}$ is AF-by-discrete over $\mathsf{B}$.
\end{prop}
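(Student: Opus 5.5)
To prove Proposition~\ref{prop:tile-inflation-AF-core}, the plan is to show two things. First, every element $(x,y)$ of $\mathfrak{T}_{\mathsf{B}}$ is realized as a germ $(s,y)$ for some $s\in G_0$ with $s(y)=x$. Second, the resulting identification carries the basic bisections $Z(\alpha,\beta)$ onto open bisections of $\mathfrak{G}$. The second claim of the proposition is then immediate from Definition~\ref{def:AF-by-discrete-groupoid}.

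For the first step, I would use connectedness of the finite tiles. Let $\alpha,\beta\in\Omega_n(\mathsf{B})$ with $\mathbf{r}(\alpha)=\mathbf{r}(\beta)=v$. Both are vertices of the connected tile $\mathcal{T}_{v,n}$, so there is an edge path $\beta=\gamma_0\xrightarrow{F_1}\gamma_1\to\cdots\xrightarrow{F_k}\gamma_k=\alpha$ inside $\mathcal{T}_{v,n}$ consisting of internal edges. By construction of the inflation, an internal edge of $\mathcal{T}_{v,n}$ persists in every $\mathcal{T}_{\mathbf{r}(\alpha z_m),m}$ for $m\ge n$ under the embeddings $\eta\mapsto\eta e_{n+1}\cdots$. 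Hence for every $z\in\Omega^{(n)}_v(\mathsf{B})$ the label $F_j$ maps $\gamma_{j-1}z$ to $\gamma_jz$. Put $s_{\alpha,\beta}=F_k\cdots F_1\in G_0$. Then $[\beta]\subseteq\operatorname{Dom}(s_{\alpha,\beta})$ and $s_{\alpha,\beta}(\beta z)=\alpha z$. Define $\Phi:\mathfrak{T}_{\mathsf{B}}\to\mathfrak{G}$ by $\Phi(\alpha z,\beta z)=(s_{\alpha,\beta},\beta z)$.

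The main obstacle is showing that $\Phi$ is well defined, that is, independent of the chosen path and of the level $n$. Suppose $s,s'$ both map $\beta z\mapsto\alpha z$ for all $z$. Then they agree on the open set $[\beta]$, so their germs coincide at every point of $[\beta]$. Passing from level $n$ to $n+1$ only shrinks the cylinder, so the germs again agree. Therefore $\Phi$ is well defined.

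$\Phi$ is a groupoid homomorphism, because concatenating paths gives $s_{\alpha,\beta}s_{\beta,\gamma}$ on $[\gamma]$. It is injective, since the range and source of $\Phi(x,y)$ are $x,y$. Since $\mathfrak{G}$ is a groupoid of germs and hence determined by the action, $\Phi$ identifies $\mathfrak{T}_{\mathsf{B}}$ with a subgroupoid. Moreover, $\Phi(Z(\alpha,\beta))=\mathcal{U}(s_{\alpha,\beta},[\beta])$ is a basic open set of $\mathfrak{G}$. So $\Phi$ is an open map onto an open subset, and it is a homeomorphism onto its image because both sides carry the topology pulled back from the source map on bisections.

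Thus $\mathfrak{T}_{\mathsf{B}}$ is an open subgroupoid of $\mathfrak{G}$ with the same unit space, and under discreteness of the complement $\mathfrak{G}$ is AF-by-discrete over $\mathsf{B}$.
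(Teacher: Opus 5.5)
Your proposal is correct and follows the same route as the paper: connectedness of $\mathcal{T}_{v,n}$ gives a word $F_k\cdots F_1$ that acts as the prefix replacement $\beta z\mapsto\alpha z$, so each basic bisection $Z(\alpha,\beta)$ lies in $\mathfrak{G}$ as the basic open set $\mathcal{U}(F_k\cdots F_1,[\beta])$. You also spell out the identification map $\Phi$, its independence of the chosen path and level, and the agreement of the two topologies, all of which the paper leaves implicit.
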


\begin{proof}
Let $\alpha,\beta\in\Omega_n(\mathsf{B})$ end at the same vertex.  Since the corresponding level-$n$ tile is connected, there is a path of internal tile edges from $\beta$ to $\alpha$.  Let $F_k,\ldots,F_1$ be the labels of this path.  Every internal tile edge is a prefix replacement preserving the remaining tail. Therefore $F_k\cdots F_1(\beta z)=\alpha z$ for every compatible infinite tail $z$.

It follows that the basic tail bisection
\[Z(\alpha,\beta)=\{(\alpha z,\beta z):z\in\Omega_{\mathbf{r}(\alpha)}^{(n)}(\mathsf{B})\}\]
is contained in $\mathfrak{G}$.  These bisections form a basis of $\mathfrak{T}_{\mathsf{B}}$, so $\mathfrak{T}_{\mathsf{B}}$ is an open subgroupoid of $\mathfrak{G}$.
\end{proof}

We next clarify the relation between finite tiles and the Cayley graphs of
the elementary subgroupoids. Let $\mathcal{T}_{v,n}^{\circ}$ denote the graph obtained from
$\mathcal{T}_{v,n}$ by deleting its boundary edges.

For every internal arrow $\beta\xrightarrow{F}\alpha$ in a level-$n$ tile, let $Z(\alpha,\beta)$ carry the label $F$, and let $\mathcal{S}_n$ be the finite labeled family of all these bisections.  Since the tiles are connected, $\mathcal{S}_n$ generates the elementary subgroupoid $\mathfrak{T}_{\mathsf{B},n}$.

\begin{prop}[Finite tiles and elementary Cayley graphs]
\label{prop:finite-tiles-Cayley-graphs}
Fix $v\in V_{n+1}$, $\gamma_0\in\mathcal{T}_{v,n}$ and
$z\in\Omega_v^{(n)}(\mathsf{B})$.  Put $x=\gamma_0z$.  Then
\[\Phi_x:\mathcal{T}_{v,n}^{\circ}\longrightarrow(\mathfrak{T}_{\mathsf{B},n})_x,
        \qquad
        \Phi_x(\gamma)=(\gamma z,\gamma_0z),\]
is an isomorphism from the rooted labeled graph $(\mathcal{T}_{v,n}^{\circ},\gamma_0)$ to the Cayley graph of the elementary groupoid $\mathfrak{T}_{\mathsf{B},n}$ at $x$ with respect to
$\mathcal{S}_n$.
\end{prop}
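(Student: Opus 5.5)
The plan is to check directly that $\Phi_x$ is well defined, bijective onto $(\mathfrak{T}_{\mathsf{B},n})_x$, sends the root to the root, and matches labeled arrows in both directions. We identify $(\mathfrak{T}_{\mathsf{B},n})_x$ explicitly first. An arrow with source $x=\gamma_0z$ in $\mathfrak{T}_{\mathsf{B},n}$ is a pair $(y,x)$ with $y_k=x_k$ for all $k>n$. This means $y=\gamma z$ for a unique $\gamma\in\Omega_n(\mathsf{B})$. Because $y$ is a path, $\mathbf{r}(\gamma)=\mathbf{s}(z_1)=v$, so $\gamma$ is a vertex of $\mathcal{T}_{v,n}$. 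Hence $\Phi_x$ is a bijection from $V(\mathcal{T}_{v,n}^{\circ})=V(\mathcal{T}_{v,n})$ onto the vertex set $(\mathfrak{T}_{\mathsf{B},n})_x$, and $\Phi_x(\gamma_0)=(x,x)=1_x$ is the root.

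Next we show that edges go to edges. Let $\beta\xrightarrow{F}\alpha$ be an internal arrow of $\mathcal{T}_{v,n}$. By construction of $\mathcal{S}_n$, the bisection $Z(\alpha,\beta)$ carries label $F$. At the vertex $\Phi_x(\beta)=(\beta z,\gamma_0 z)$ the range is $\beta z\in[\beta]=\mathbf{s}(Z(\alpha,\beta))$. The unique $\eta\in Z(\alpha,\beta)$ with source $\beta z$ is $(\alpha z,\beta z)$. So the Cayley graph has the $F$-labeled arrow $\Phi_x(\beta)\to(\alpha z,\beta z)(\beta z,\gamma_0z)=(\alpha z,\gamma_0 z)=\Phi_x(\alpha)$.

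Then we show that every Cayley arrow comes from a tile arrow. Take an arrow $\Phi_x(\beta)\to\eta\Phi_x(\beta)$ labeled by some element $Z(\alpha',\beta')\in\mathcal{S}_n$, which arises from an internal arrow $\beta'\xrightarrow{F}\alpha'$ in some level-$n$ tile $\mathcal{T}_{w,n}$. The source of this bisection is $[\beta']$, and it must contain $\beta z$. Since $\beta,\beta'\in\Omega_n(\mathsf{B})$ have the same length, this forces $\beta'=\beta$, hence $w=v$, and the target is $\Phi_x(\alpha')$. So the arrow is the image of the tile arrow $\beta\xrightarrow{F}\alpha'$.

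The main obstacle is the labeling. Several tiles, or several arrows, might a priori produce the same bisection $Z(\alpha,\beta)$ with different labels, or two different labels at one vertex. Well-labeledness of the tiles rules out two outgoing arrows with the same label at $\beta$. The point that needs care is to treat $\mathcal{S}_n$ as a labeled family, so that each arrow $\beta\xrightarrow{F}\alpha$ gives its own labeled element. Then labeled arrows at each vertex correspond bijectively, and incoming arrows are handled the same way using inverse edges $F^{-1}$. This yields a rooted labeled graph isomorphism.
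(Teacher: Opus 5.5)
Your proof is correct and follows the same route as the paper. You identify the source fiber to get the bijection and the root. You then use the unique element $(\alpha z,\beta z)$ of $Z(\alpha,\beta)$ with source $\beta z$ to send tile arrows to Cayley arrows. Your converse (that $\beta z\in[\beta']$ forces $\beta'=\beta$ and hence $w=v$) and your remark on treating $\mathcal{S}_n$ as a labeled family spell out what the paper compresses into ``the converse follows from the definition of $\mathcal{S}_n$.''
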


\begin{proof}
The source fiber $(\mathfrak{T}_{\mathsf{B},n})_x$ consists of the arrows
$(\gamma z,\gamma_0z)$, where $\gamma\in\mathcal{T}_{v,n}$. Hence $\Phi_x$ is a bijection and sends $\gamma_0$ to the unit at $x$.

Suppose that the tile contains an internal arrow $\beta\xrightarrow{F}\alpha$.  The unique element of $Z(\alpha,\beta)$ with source $\beta z$ is $(\alpha z,\beta z)$, and
\[(\alpha z,\beta z)(\beta z,\gamma_0z)=(\alpha z,\gamma_0z).\]
Thus $\Phi_x$ maps the tile arrow from $\beta$ to $\alpha$ to the corresponding Cayley edge.  The converse follows from the definition of $\mathcal{S}_n$.
\end{proof}

The boundary edges require a separate interpretation. They are part of the recursive data used to construct the next tiles, but they are not edges of the Cayley graph of $\mathfrak{T}_{\mathsf{B},n}$.

\begin{prop}
\label{prop:tile-boundary-Cayley-graph}
Let $\beta\in\mathcal{T}_{v,n}$ have an outgoing boundary edge labeled by $F$, and let $z\in\Omega_v^{(n)}(\mathsf{B})$ satisfy $y:=\beta z\in\operatorname{Dom}(F)$.  Then the germ $(F,y)$ gives an $F$-labeled edge in the Cayley graph of $\mathfrak{G}$, but this edge does not belong to the image of $\mathcal{T}_{v,n}^{\circ}$.

Moreover, the following conditions are equivalent:

\begin{enumerate}
\item
$(F,y)\in\mathfrak{T}_{\mathsf{B}}$;

\item
there is $m\geq n$, an extension $\beta'\in\Omega_m(\mathsf{B})$ of $\beta$ such that $y\in[\beta']$, and a path $\alpha'\in\Omega_m(\mathsf{B})$ such that $Z(\alpha',\beta')\subseteq F$;

\item
the $F$-boundary edge at $\beta$ becomes an internal tile edge along the path $y$ after passing to a sufficiently deep level.
\end{enumerate}
\end{prop}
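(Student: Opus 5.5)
We prove the two assertions separately. The first is a local comparison of germs. The second is the equivalence of (1), (2), (3), proved as (1)$\Rightarrow$(2)$\Rightarrow$(3)$\Rightarrow$(1).

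\emph{The edge $(F,y)$ is not in the image of $\mathcal{T}_{v,n}^{\circ}$.} The germ $(F,y)$ has source $y$ and label $F$, so it gives an $F$-labeled edge at the root of $\mathfrak{G}(y,\mathcal{S})$. By Proposition~\ref{prop:finite-tiles-Cayley-graphs}, the image of $\mathcal{T}_{v,n}^{\circ}$ (with root $\beta$) consists of arrows $(\gamma z,\beta z)$ and the edges coming from internal tile arrows, i.e.\ germs of the bisections in $\mathcal{S}_n$. Suppose the $F$-edge at $\beta$ were in this image. Then $\mathcal{T}_{v,n}$ would contain an internal $F$-labeled arrow out of $\beta$. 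But $\beta$ already has an outgoing boundary edge labeled $F$, and the tile is well-labeled, so there is at most one outgoing $F$-edge at $\beta$. This is a contradiction.

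\emph{(1)$\Rightarrow$(2).} Assume $(F,y)\in\mathfrak{T}_{\mathsf{B}}$. Then $(F,y)=(F(y),y)$ is a tail pair, and the bisections $Z(\alpha',\beta')$ form a basis of $\mathfrak{T}_{\mathsf{B}}$. So $(F,y)$ lies in some $Z(\alpha',\beta')$ with $\beta'$ a prefix of $y$, and we may take $m\geq n$ by extending. Since $F$ is an open bisection containing $(F,y)$, after lengthening $\alpha',\beta'$ further we get $Z(\alpha',\beta')\subseteq F$ as sets of germs. In a groupoid of germs this means $F(\beta'z')=\alpha'z'$ on $[\beta']$.

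\emph{(2)$\Rightarrow$(3).} Now $F$ acts on $[\beta']$ as the prefix replacement $\beta'\mapsto\alpha'$. Choose $k\geq m$ so that $\alpha'$ and $\beta'$, extended along the tail of $y$ to level $k$, lie in the same level-$k$ tile; this is possible because they share a tail. In that level-$k$ tile the vertex $y_k$ has an $F$-edge whose target is the corresponding prefix of $F(y)$. This step needs the compatibility between the label action and the tile edges: by construction, $F(\gamma)$ is defined through internal edges of the infinite tile, and otherwise through the continuous extension at boundary points. I expect this to be the main subtlety. One must show the $F$-edge at $y_k$ is internal rather than a persisting boundary edge whose action is only extended by continuity. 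For this, note that a persisting boundary edge at $y$ would make $y$ a boundary point. At a boundary point the germ of $F$ is given by the extension, which moves the tail. The germ would then fail to lie in $\mathfrak{T}_{\mathsf{B}}$, contradicting $Z(\alpha',\beta')\subseteq F$. Hence the $F$-boundary edge is consumed by a connector at some level, which is (3).

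\emph{(3)$\Rightarrow$(1).} Suppose the edge becomes internal at level $k$, say $y_k\xrightarrow{F}\alpha_k$. Internal tile edges act as prefix replacements preserving the remaining tail, as in the proof of Proposition~\ref{prop:tile-inflation-AF-core}. So $F$ agrees with the replacement $y_k\mapsto\alpha_k$ on $[y_k]$. Therefore $(F,y)\in Z(\alpha_k,y_k)\subseteq\mathfrak{T}_{\mathsf{B}}$.
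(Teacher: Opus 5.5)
\textbf{Verdict: genuine gap in (2)$\Rightarrow$(3).} The rest of your proposal is sound. Your argument for the first assertion is correct: an $F$-labeled edge at the root in the image of $\mathcal{T}_{v,n}^{\circ}$ would be a second outgoing $F$-edge at $\beta$, which well-labeledness forbids. It is also sturdier than the paper's argument, which relies on $F$ not agreeing with a level-$n$ prefix replacement on $[\beta]$; that claim fails in the example below. Your (1)$\Rightarrow$(2) coincides with the paper's, and (3)$\Rightarrow$(1) is fine.

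In (2)$\Rightarrow$(3) you must pass from the dynamical fact $Z(\alpha',\beta')\subseteq F$ to the combinatorial fact that the tiles along $y$ eventually contain an internal $F$-edge. Your justification is that a persistent boundary edge makes $y$ a boundary point, where ``the germ of $F$ is given by the extension, which moves the tail.'' That clause is unsupported. At a boundary point $F$ is defined only by continuous extension. Moreover, the claim that a persistent boundary edge forces $(F,y)\notin\mathfrak{T}_{\mathsf{B}}$ is exactly the contrapositive of (1)$\Rightarrow$(3), so the argument is circular. The step before it is vacuous: choosing $k$ so that $\alpha'$ and $\beta'$ lie in a common tile is unnecessary, since $Z(\alpha',\beta')$ already requires $\mathbf{r}(\alpha')=\mathbf{r}(\beta')$.

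This step cannot be proved from the definitions as stated. Take one vertex per level and $E_n=\{0,1\}$. Let the level-$1$ tile have an internal $S$-edge between $0$ and $1$, an outgoing $F$-boundary edge at $0$, and an incoming one at $1$. Add an odometer-type label $T$ with level-$(k+1)$ connector $1^k0\xrightarrow{T}0^k1$ to keep all tiles connected. At level $k+1$, let the $F$-connector join $01^{k-1}0$ to $1^k0$, and leave the $F$-boundary edges at $01^k$ and $1^{k+1}$ free. This is a bounded-type tile inflation with three boundary points. After continuous extension, $F(0w)=1w$ on all of $[0]$. So $Z(1,0)\subseteq F$ and $(F,01^\infty)\in\mathfrak{T}_{\mathsf{B}}$, yet the $F$-edge along $y=01^\infty$ is a boundary edge at every level.

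The paper's proof does not supply this step either; it simply declares the edge internal once $Z(\alpha',\beta')\subseteq F$. To close the gap you need a nondegeneracy hypothesis, for instance that no boundary-connection state of the automaton in Proposition~\ref{Automaton} acts as the identity. Suppose the edge were still a boundary edge along $y$ at level $m$, where $\beta'=y_m$. Then $Z(\alpha',\beta')\subseteq F$ says precisely that the state $(F,y_m,\alpha')$ reached along $y$ acts as the identity on $\Omega^{(m)}_{\mathbf{r}(y_m)}(\mathsf{B})$. The hypothesis rules this out, so a connector must have absorbed the edge at some level $\leq m$, which is (3).
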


\begin{proof}
Since the edge at $\beta$ is a boundary edge, $F$ does not agree on the whole cylinder $[\beta]$ with a level-$n$ prefix replacement. Hence the corresponding edge of the ambient Cayley graph is not represented in $\mathcal{T}_{v,n}^{\circ}$.

If $Z(\alpha',\beta')\subseteq F$, then $(F,y)$ belongs to the tail groupoid and the corresponding edge is internal at level $m$.

Conversely, suppose $(F,y)\in\mathfrak{T}_{\mathsf{B}}$. Since both $F$ and $\mathfrak{T}_{\mathsf{B}}$ are open, the germ $(F,y)$ belongs to a basic tail bisection $Z(\alpha',\beta')$ contained in $F$.  After increasing the level, we may assume that $\beta'$ extends $\beta$. Thus the edge becomes internal at that level.
\end{proof}

\begin{rmk}
A boundary edge records that the action of its label is not represented uniformly on the whole finite cylinder by a prefix replacement. It does not necessarily represent a non-AF germ.  For a fixed infinite path, the edge may become internal after further inflation.  It remains a boundary edge at every deeper level precisely when the corresponding germ does not belong to $\mathfrak{T}_{\mathsf{B}}$.
\end{rmk}

The infinite tile $\mathcal{T}_{\gamma}^{\circ}$ is therefore the subgraph of the orbital graph of $\gamma$ consisting of the vertices in the $\mathfrak{T}_{\mathsf{B}}$-orbit of $\gamma$ and the generator edges whose germs belong to the AF core.  The remaining generator edges connect the infinite tiles at persistent boundary points.

Since $\mathcal{S}$ is finite and every $F\in\mathcal{S}$ is compact, the set $\bigcup_{F\in\mathcal{S}}\bigl(F\setminus\mathfrak{T}_{\mathsf{B}}\bigr)$ is a compact subset of the discrete space $\mathfrak{G}\setminus\mathfrak{T}_{\mathsf{B}}$, and is therefore finite. Consequently, every orbital graph is obtained by joining infinite tiles by finitely many non-AF generator edges. Every infinite tile lifts isomorphically to a subgraph of the corresponding Cayley graph of $\mathfrak{G}$.

We will also need the family of inverse semigroups (or groups) defined by the same tile inflation process, denoted $\{G_0\}_{\mathsf{B}}$ (or $\{G\}_{\mathsf{B}}$). To define this family, we need the notion of \textit{automata}.    

\begin{defn}\cite[Subsections~2.3.3--2.3.5]{nek22} \label{nondetaut}
     Let $X_1,X_2,\ldots$ and $X_1',X_2',\ldots$ be two sequences of \textit{alphabets}, and let $Q_1,Q_2,\ldots$ be a sequence of \textit{sets of states}. A \textit{non-deterministic time-varying automaton} $\mathcal{A}$ consists of a sequence of transitions $T_1,T_2,\ldots$, where $T_n\subset Q_n\times Q_{n+1}\times X_n\times X_n'$. The automaton is \textit{$\omega$-deterministic} if, for every initial state $q_1\in Q_1$ and every input sequence $x_1x_2\ldots$ with $x_i\in X_i$, there exists at most one infinite sequence of transitions
     \[(q_i,q_{i+1},x_i,y_i)\in T_i,\qquad i\geq1.\]
     Thus each initial state $q_1\in Q_1$ defines a partial transformation on the set of input sequences admitting such an infinite transition sequence, by
     \[x_1x_2\ldots\longmapsto y_1y_2\ldots.\]
\end{defn}

Time-varying automata can be presented by Moore diagrams. 

\begin{defn}
Let $\mathcal{A}$ be a non-deterministic time-varying automaton. Its \textit{Moore diagram} consists of a set of vertices $Q=\bigcup_n Q_n$ and a set of edges $T=\bigcup_n T_n$, where $(q_i,q_{i+1},x,y)\in T_i$ is an arrow from $q_i\in Q_i$ to $q_{i+1}\in Q_{i+1}$ labeled by $x|y$. The state $q_{i+1}$ is an element in the \textit{section} of $q_i$ at $x$. The section (set of states) is denoted $q_i|_{x}$. 
\end{defn}

\begin{defn}\label{finconpt} Let $\mathcal{T}_{v}$ be an $n$-th level tile. An \textit{$n$-th level boundary connection} is a triple $(F,\gamma_1,\gamma_2)$ where $F\in\mathcal{S}$, and $\gamma_1,\gamma_2$ are paths of length $n$ such that $F$ is an outgoing boundary edge at $\gamma_1$ and an incoming boundary edge at $\gamma_2$. If $(F,\gamma_1,\gamma_2)$ is a boundary connection, then $(F^{-1},\gamma_2,\gamma_1)$ is also a boundary connection. 
\end{defn}

The following was proved in \cite[Proposition~5.2.19]{nek22}. 

\begin{prop}[Automaton associated with the tile inflation process] \label{Automaton} 
 Consider the following $\omega$-deterministic time-varying automaton $\mathcal{A}$. The input and output alphabets at level $n$ are both $E_n$. The set of states $Q_n$ is the union of the set of trivial states $1_v$ labeled by the vertices $v\in V_n$ and the set of $n$-th level boundary connections $(F,\gamma_1,\gamma_2)$.  

For every $e\in E_n$, define a transition from the state $1_{\mathbf{s}(e)}$ to $1_{\mathbf{r}(e)}$ labeled by $e|e$.  If $e_1$ and $e_2$ are edges such that $F$ is an outgoing boundary edge at $\gamma_1e_1$ and incoming boundary edge at $\gamma_2e_2$, then for every boundary connection $(F,\gamma_1e_1,\gamma_2e_2)$, define a transition from $(F,\gamma_1,\gamma_2)$ to $(F,\gamma_1e_1,\gamma_2e_2)$ labeled by $e_1|e_2$. Otherwise, define a transition from $(F,\gamma_1,\gamma_2)$ to $1_{\mathbf{r}(e_1)}$ labeled by $e_1|e_2$. 

Then the set of initial states of the form $(F,v_1,v_2)$, for $v_1,v_2\in V_1$, defines the local homeomorphism $F$. Each non-initial non-trivial state has exactly $1$ incoming edge. 
\end{prop}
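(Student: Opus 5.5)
The statement is \cite[Proposition~5.2.19]{nek22}, and I would prove its three assertions in increasing order of difficulty: the incoming-edge count, then $\omega$-determinism, then that the initial state $(F,v_1,v_2)$ defines $F$.

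\emph{Incoming edges.} Every transition out of a trivial state goes to a trivial state. A transition ending at a non-trivial state $(F,\gamma_1',\gamma_2')$ of level $n+1\geq2$ must therefore start at a boundary connection. By construction it has the form $(F,\gamma_1,\gamma_2)\to(F,\gamma_1e_1,\gamma_2e_2)$ with label $e_1|e_2$. Since $\gamma_1'=\gamma_1e_1$ and $\gamma_2'=\gamma_2e_2$, the source state and the label are recovered by deleting the last edges. So there is exactly one incoming edge, and it exists because a boundary edge of a level-$(n+1)$ tile that is not created by inflation comes from an unused boundary edge of the level-$n$ copy. This uses the rule ``non-boundary vertices do not produce boundary vertices''.

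\emph{$\omega$-determinism.} Fix an initial state and an input $x=e_1e_2\ldots$. From a trivial state $1_v$, the only transition with input $e$ is $e|e$ to $1_{\mathbf{r}(e)}$, so there is nothing to check. From a boundary connection $(F,\gamma_1,\gamma_2)$ with input $e_1$, the tile $\mathcal{T}_{\mathbf{r}(\gamma_1e_1),n+1}$ is well-labeled, so there is at most one outgoing $F$-edge at $\gamma_1e_1$. Two cases arise:
\begin{enumerate}
\item If that edge is internal, created by a connector $(\gamma_1e_1,\gamma_2e_2,F)$, the output $e_2$ is forced and the run enters a trivial state.
\item If it remains a boundary edge, the output can a priori be any $e_2$ for which $\gamma_2e_2$ carries an incoming $F$-boundary edge.
\end{enumerate}
Case~2 is where finite-level non-determinism really occurs. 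I expect it to be the main obstacle, and I would resolve it with the standing assumption that $F$ extends continuously with clopen domain. Suppose two infinite runs exist and first diverge at some level. If either run eventually enters case~1, the connectors force its output. So at least one of them stays forever in boundary connections, and hence $x$ is a persistent boundary point. Then every output of an infinite run is a limit of points $F(x_k)$ with $x_k\to x$, where $x_k$ shares a long prefix with $x$ and leaves the boundary through a connector. Continuity of $F$ at $x$ then forces the outputs to agree.

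\emph{The initial state defines $F$.} For $x\in\operatorname{Dom}(F)$ there are two cases.
\begin{enumerate}
\item If the $F$-edge at $x_n$ becomes internal at some level $n$, the run follows boundary connections along the prefixes up to level $n$. It then switches via the connector to trivial states that copy the tail. The output is therefore the prefix replacement $\alpha z$, where $x=\beta z$, and this equals $F(x)$ by the definition of the action on internal edges.
\item If $x$ is a persistent boundary point, the outputs converge to $F(x)$ by the continuity argument above.
\end{enumerate}
Conversely, an input on which the automaton admits an infinite run has outputs that are either eventually copied through a connector or limits along a persistent boundary point. In both cases the input lies in $\operatorname{Dom}(F)$. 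Hence the partial transformation defined by $(F,v_1,v_2)$, with $v_1,v_2\in V_1$ ranging over initial states, coincides with $F$.
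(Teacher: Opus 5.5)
The paper gives no argument of its own for this statement; it only cites \cite[Proposition~5.2.19]{nek22}. Your proposal therefore has to stand on its own. Two parts are fine: the count of incoming edges, and the treatment of inputs whose $F$-edge eventually becomes internal.

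\textbf{The gap.} It sits in the persistent-boundary case, where you assert that every output of an infinite run is a limit of values $F(x_k)$ at nearby points that leave the boundary through a connector. Nothing in the definition of the states supports this. A boundary connection $(F,\gamma_1,\gamma_2)$ only records that $\gamma_1$ carries an outgoing and $\gamma_2$ an incoming $F$-boundary edge. It does not say that these two edges are ever joined by a connector below $\gamma_1$ and $\gamma_2$. So a run can stay forever in states that pair the input with an unrelated persistent path.

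Here is a concrete example. On the binary tree (one vertex per level), take the involution $F(0w)=0\,b'(w)$, $F(1w)=1\,b(w)$, where $b=(a,b)$ and $b'=(b',a)$. Add finitary labels that keep the tiles connected. Every $0^n$ and every $1^n$ carries an $F$-boundary edge in both directions, so $(F,0^n,1^n)$ is a state for every $n$. On input $0^\omega$ there are then two infinite runs from the single initial state, with outputs $0^\omega=F(0^\omega)$ and $1^\omega$. Continuity of $F$ at the input says nothing about the second run. Hence your argument proves neither $\omega$-determinism nor that the initial states define $F$, and for the automaton read literally both statements fail.

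\textbf{What is needed.} You need a realizability requirement on the non-trivial states. Keep only those boundary connections $(F,\gamma_1,\gamma_2)$ whose two boundary edges are actually joined by a connector at some deeper level, i.e.\ from which some finite input leads through a connector transition to a trivial state. This is presumably what ``boundary connections occurring in the tile inflation'' is meant to express. The property passes to the unique predecessor, so your incoming-edge count is unaffected.

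With this restriction your continuity idea works:
\begin{enumerate}
\item Along a run through $(F,x_n,y_n)$, choose $p_n\in[x_n]$ whose run from $(F,x_n,y_n)$ passes a connector. Then $p_n\in\operatorname{Dom}(F)$ and $F(p_n)\in[y_n]$.
\item Since $\operatorname{Dom}(F)$ is closed and $F$ is continuous, $p_n\to x$ gives $x\in\operatorname{Dom}(F)$ and $F(x)=y$.
\item A run is determined by its input and output, so this yields both $\omega$-determinism and the identification with $F$.
\end{enumerate}

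\textbf{Two further omissions.} First, you must show that the run with output $F(x)$ actually exists for a persistent $x$. Approximate $x$ by points of $[x_n]$ whose $F$-edge becomes internal at some level $m>n$ and whose image lies in $[F(x)_n]$. The connector creating that edge, by inheritance of boundary edges under inflation, shows that $F(x)_n$ carries an incoming $F$-boundary edge joined to the one at $x_n$.

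Second, your divergence step needs one more line. Two runs on the same input reach the connector case at the same level. There the connector has a unique target, so at most one of the runs can continue.
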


It follows that $(F,\gamma_1e_1,\gamma_2e_2)$ is an element of the section of $(F,\gamma_1,\gamma_2)$ at $e_1$. Let $F\in\mathcal{S}$ (i.e., it is defined by a set of initial states of the form $(F,v_1,v_2)\in\mathcal{A}$). Let $\gamma\in\Omega^*(\mathsf{B})$. Then the \emph{section of $F$ at $\gamma$}, denoted $F|_{\gamma}$, is defined to be the set of states of the form $(F,\gamma,-)$.   

For every $i\geq0$, let $G_i$ be generated by the transformations in $F|_{\gamma}$, where $F\in\mathcal{S}$ and $\gamma\in\Omega_i(\mathsf{B})$ is a path on which $F$ is defined. We denote the resulting family $(G_i)_{i\geq0}$ by $\{G_0\}_{\mathsf{B}}$.  

The boundary connections occurring in the tile inflation are precisely the nontrivial states of the associated $\omega$-deterministic time-varying automaton from Proposition~\ref{Automaton}. We will use that automaton to describe the sections of the generators.

\subsection{The AF symmetric and alternating groups}
\label{subsec:AF-symmetric-alternating-groups}

For a finite set $Y$, we denote its symmetric and alternating groups by $\mathsf{S}(Y)$ and $\mathsf{A}(Y)$, respectively.  For $v\in V_{n+1}$, write $V(\mathcal{T}_{v,n})=\{\gamma\in\Omega_n(\mathsf{B}): \mathbf{r}(\gamma)=v\}$. We define $\mathsf{S}_n:=\prod_{v\in V_{n+1}}\mathsf{S}(\mathcal{T}_{v,n})$ and $\mathsf{A}_n:=\prod_{v\in V_{n+1}}\mathsf{A}(\mathcal{T}_{v,n})$. More precisely, let $\sigma=(\sigma_v)_{v\in V_{n+1}}\in\mathsf{S}_n$, then $\sigma(\gamma z)=\sigma_v(\gamma)z$ for $\gamma\in V(\mathcal{T}_{v,n})$ and
$z\in\Omega_v^{(n)}(\mathsf{B})$.

The canonical embedding, also called the diagonal embedding, $\iota_{n,n+1}:\mathsf{S}_n\to\mathsf{S}_{n+1}$ copies the same permutation to every descendant.  More precisely, for $w\in V_{n+2}$,
\[\left.\iota_{n,n+1}(\sigma)\right|_{\mathcal{T}_{w,n+1}}=\prod_{\substack{e\in E_{n+1}\\ \mathbf{r}(e)=w}}(\sigma_{\mathbf{s}(e)})_e,\]
where $(\sigma_{\mathbf{s}(e)})_e$ is the copy of $\sigma_{\mathbf{s}(e)}$ acting on
$\mathcal{T}_{\mathbf{s}(e),n}e$. These embeddings restrict to $\iota_{n,n+1}:\mathsf{A}_n\to\mathsf{A}_{n+1}$.

\begin{defn}[Diagonal subgroup]
\label{def:diagonal-subgroup}
Let $Y$ be a finite set, let $H\leq\mathsf{S}(Y)$, and let $\phi_i:Y\to Y_i$, $i\in I$, be bijections onto pairwise disjoint finite sets. We extend every permutation of $Y_i$ identically outside $Y_i$ and put
\[\Delta_{\{\phi_i\}_{i\in I}}(H):=\left\{\prod_{i\in I}\phi_i\sigma\phi_i^{-1}:\sigma\in H\right\}.\]
We call this group the \emph{diagonal subgroup} determined by the maps $\phi_i$. If the sets $Y_i$ are canonically identified copies of $Y$, we omit the maps from the notation. For example, $\Delta(\mathsf{A}(Y_1),\ldots,\mathsf{A}(Y_k))$ denotes the diagonal
copy of $\mathsf{A}(Y)$ in $\mathsf{A}(Y_1)\times\cdots\times\mathsf{A}(Y_k)$.
\end{defn}

For $v\in V_{n+1}$ and $E(v):=\{e\in E_{n+1}:\mathbf{s}(e)=v\}$, the maps
$\phi_e:\mathcal{T}_{v,n}\to\mathcal{T}_{v,n}e$, $\phi_e(\gamma)=\gamma e$, are canonical identifications. Hence the image of $\mathsf{S}(\mathcal{T}_{v,n})$ under $\iota_{n,n+1}$ is the corresponding diagonal subgroup. The same statement holds for the alternating factor.

The description of the AF symmetric group as a direct limit of finite products of symmetric groups follows from \cite[Proposition~3.3]{Matui06}. Nekrashevych \cite[Section~3.2]{nnek19} defines the symmetric and alternating groups of an arbitrary étale groupoid using multisections and proves that both are normal in its topological full group.  For the tail groupoid, these groups agree with the following direct limits.

\begin{defn}
\label{def:AF-full-groups-direct-limits}
We define the AF symmetric and alternating groups by
\[\mathsf{S}(\mathfrak{T}_{\mathsf{B}}):=\varinjlim_n\mathsf{S}_n,
\qquad
\mathsf{A}(\mathfrak{T}_{\mathsf{B}})
:=\varinjlim_n\mathsf{A}_n.\]
We identify each group with the corresponding increasing union of its finite-level images.
\end{defn}

\begin{prop}
\label{prop:AF-alternating-normal}
The group $\mathsf{A}(\mathfrak{T}_{\mathsf{B}})$ is a normal subgroup of $\mathsf{S}(\mathfrak{T}_{\mathsf{B}})$.
\end{prop}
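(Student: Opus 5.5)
The plan is to reduce normality to a statement at a single finite level, using the direct-limit description in Definition~\ref{def:AF-full-groups-direct-limits}. Take $\sigma\in\mathsf{S}(\mathfrak{T}_{\mathsf{B}})$ and $a\in\mathsf{A}(\mathfrak{T}_{\mathsf{B}})$. By definition, $\sigma$ is the image of some $\sigma_m\in\mathsf{S}_m$ and $a$ is the image of some $a_k\in\mathsf{A}_k$. The first step is to push both elements to the common level $n=\max(m,k)$. We use that the canonical embeddings $\iota_{n,n+1}$ map $\mathsf{S}_n$ into $\mathsf{S}_{n+1}$ and restrict to maps $\mathsf{A}_n\to\mathsf{A}_{n+1}$, as noted before Definition~\ref{def:diagonal-subgroup}. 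We also use that they are compatible with the action on $\Omega(\mathsf{B})$, since $\iota_{n,n+1}(\sigma)$ acts on $\gamma e z$ by $\sigma_{\mathbf{s}(e)}(\gamma)ez$, which is the same homeomorphism as $\sigma$. After this step, $\sigma\in\mathsf{S}_n$ and $a\in\mathsf{A}_n$ act as the same homeomorphisms as before.

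At level $n$ the claim is immediate. We have $\mathsf{S}_n=\prod_{v\in V_{n+1}}\mathsf{S}(\mathcal{T}_{v,n})$ and $\mathsf{A}_n=\prod_v\mathsf{A}(\mathcal{T}_{v,n})$. The action of $\sigma$ on $\gamma z$, with $\gamma\in V(\mathcal{T}_{v,n})$, is coordinatewise, so the conjugate satisfies $(\sigma a\sigma^{-1})_v=\sigma_v a_v\sigma_v^{-1}$. This lies in $\mathsf{A}(\mathcal{T}_{v,n})$ because the alternating group is normal in the finite symmetric group. Hence $\sigma a\sigma^{-1}\in\mathsf{A}_n$, and therefore it lies in $\mathsf{A}(\mathfrak{T}_{\mathsf{B}})$.

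The one point that needs care, and that I expect to be the only real obstacle, is checking that the identifications are legitimate. Each $\mathsf{S}_n$ must act faithfully on $\Omega(\mathsf{B})$; this holds because every cylinder $[\gamma]$ is nonempty, as the maps $\mathbf{s}_n$ are surjective. The embeddings $\iota_{n,n+1}$ must also be injective homomorphisms that commute with the actions. Once this is established, the direct limit can be treated as an increasing union of subgroups of $\operatorname{Homeo}(\Omega(\mathsf{B}))$, and the conjugation above, computed inside $\mathsf{S}_n$, agrees with conjugation in $\mathsf{S}(\mathfrak{T}_{\mathsf{B}})$. Finally, we note that $\mathsf{A}(\mathfrak{T}_{\mathsf{B}})$ is a subgroup, because it is an increasing union of the subgroups $\mathsf{A}_n$. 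This completes normality.
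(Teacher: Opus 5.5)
Your argument is correct. You push $\sigma$ and $a$ to a common level $n$ through the action-compatible embeddings $\iota$, compute $(\sigma a\sigma^{-1})_v=\sigma_v a_v\sigma_v^{-1}$ coordinatewise, and use $\mathsf{A}(\mathcal{T}_{v,n})\trianglelefteq\mathsf{S}(\mathcal{T}_{v,n})$. This is exactly the fact that an increasing union of normal subgroups along compatible embeddings is normal.

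The paper gives no separate proof of this proposition. It relies on the citation just before Definition~\ref{def:AF-full-groups-direct-limits}: Nekrashevych defines the symmetric and alternating groups of an arbitrary étale groupoid through multisections and shows they are normal in its topological full group. The paper then combines this with the asserted identification, for the tail groupoid, of those groups with the direct limits $\varinjlim_n\mathsf{S}_n$ and $\varinjlim_n\mathsf{A}_n$.

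Your route is self-contained and uses only the direct-limit definition the paper actually works with. So it does not depend on that identification. The cited route is more general: it does not need every element to be a finite-level prefix permutation, and it gives normality of the alternating group in the full group of any étale groupoid.

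Two small remarks. First, the faithfulness check is harmless but not needed: the image of a normal subgroup under a homomorphism is normal in the image of the ambient group. Second, under the hypotheses of Proposition~\ref{prop:AF-parity-exact-sequence}, normality also follows a posteriori from $\mathsf{A}(\mathfrak{T}_{\mathsf{B}})=\ker\varepsilon_{\mathsf{B}}$.
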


\begin{prop}[Gluing finite alternating groups]
\label{prop:finite-alternating-gluing}
Let all finite sets below have at least three vertices.
\begin{enumerate}
\item If $p\in Y$, then $\mathsf{A}(Y)$ is generated by the cycles $(p,u,v)$ with $u,v\in Y\setminus\{p\}$ and $u\neq v$.

\item If $U\cap V\neq\varnothing$, then $\langle\mathsf{A}(U),\mathsf{A}(V)\rangle=\mathsf{A}(U\cup V)$.

\item Suppose that $Y_1,\ldots,Y_k$ cover $Y$ and have connected intersection graph. Then these alternating groups generate $\mathsf{A}(Y)$.

\item The preceding statements remain valid for diagonal alternating groups on canonically identified copies of the supports.
\end{enumerate}
\end{prop}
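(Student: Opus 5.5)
The plan is to prove the four items in order, each reducing to the previous one. The only tools are the standard fact that $\mathsf{A}(Y)$ is generated by $3$-cycles and elementary conjugation identities among $3$-cycles.

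For (1), I will show that every $3$-cycle lies in $H:=\langle (p,u,v): u,v\in Y\setminus\{p\}, u\neq v\rangle$; since $3$-cycles generate $\mathsf{A}(Y)$, this gives $H=\mathsf{A}(Y)$. A $3$-cycle containing $p$ can be rotated so that $p$ comes first. For a $3$-cycle $(a,b,c)$ avoiding $p$, I will use the identity
\[
(a,b,c)=(p,a,b)(p,b,c)\cdot(\text{correction}),
\]
or more cleanly the conjugation $(a,b,c)=(p,c,a)(p,a,b)(p,c,a)^{-1}$. This identity is verified by the rule $g(x,y,z)g^{-1}=(g(x),g(y),g(z))$: the element $(p,c,a)$ sends $p\mapsto c$, $a\mapsto p$, $b\mapsto b$, so conjugating $(p,a,b)$ by it gives $(c,p,b)=(p,b,c)$. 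That is not the cycle we want, so I will pick the conjugator to send $p\mapsto a$ and fix $b,c$ in the right way. The cleanest choice is to conjugate $(p,b,c)$ by a $3$-cycle $g=(p,a,x)$ with $g(p)=a$ that fixes $b$ and $c$. This needs a fourth point $x$, and when $|Y|=3$ there are no cycles avoiding $p$ at all. With $|Y|\ge4$ and $x=b$ one has to adjust, so instead I will simply take $g=(a,p,\cdot)$ among the generators and check the finitely many cases. This step is routine.

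For (2), choose $p\in U\cap V$. By (1), $\mathsf{A}(U\cup V)$ is generated by the cycles $(p,u,v)$ with $u,v\in U\cup V$. If $u$ and $v$ lie in the same set, the cycle already belongs to $\mathsf{A}(U)$ or $\mathsf{A}(V)$. If $u\in U\setminus V$ and $v\in V\setminus U$, I will write
\[
(p,u,v)=(p,v,w)\,(p,u,w)\,(p,v,w)^{-1}
\]
for a suitable third point $w$. When $|U|\ge3$, pick $w\in U$ with $w\neq p,u$; then $(p,u,w)\in\mathsf{A}(U)$. The difficulty is that the conjugator $(p,v,w)$ need not lie in either group. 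So I will instead conjugate $(p,u,w)\in\mathsf{A}(U)$ by an element of $\mathsf{A}(V)$ that sends $w\mapsto v$ and fixes $p$ and $u$. Such an element exists because $u\notin V$, and $|V|\ge3$ lets me choose a $3$-cycle in $V$ that moves $w$ to $v$ while fixing $p$. This requires $w\in V$. If no such $w$ exists, I will first reduce to that situation by another such conjugation. Handling these small configurations, such as $|U\cap V|=1$ with $w$ forced outside $V$, is the main obstacle, and I will settle it by case analysis on whether $U\cap V$ contains a second point.

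For (3), I will induct along a spanning tree of the intersection graph. Ordering the sets so that each $Y_j$ meets $Y_1\cup\cdots\cup Y_{j-1}$, part (2) gives $\langle\mathsf{A}(Y_1\cup\cdots\cup Y_{j-1}),\mathsf{A}(Y_j)\rangle=\mathsf{A}(Y_1\cup\cdots\cup Y_j)$, and the union of all the sets is $Y$.

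For (4), the diagonal maps $\Delta$ are injective homomorphisms $\mathsf{S}(Y)\to\prod_i\mathsf{S}(Y_i)$ that are compatible with restriction to subsets. Therefore every generation statement about subgroups of $\mathsf{S}(Y)$ transfers verbatim to their diagonal images.
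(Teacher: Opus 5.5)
Your parts (3) and (4) agree with the paper. The paper inducts along a spanning tree of the intersection graph, and the diagonal map is an injective homomorphism, so the same words work in every coordinate. Parts (1) and (2), however, stop exactly at the cases that carry the content. In (1), a conjugator $(p,a,x)$ fixing $b$ and $c$ needs $x\notin\{p,a,b,c\}$, which is a fifth point, and you leave $|Y|=4$ to an unperformed check. Your first guess was already correct. Composing from right to left as in the paper, $(a\,b\,c)=(p\,a\,b)(p\,b\,c)$ with no correction, and this is the paper's one-line proof of (1).

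In (2) the gap is genuine. You conjugate $(p\,u\,w)\in\mathsf{A}(U)$ by an element of $\mathsf{A}(V)$ that fixes $p$ and sends $w$ to $v$. This requires $w\in(U\cap V)\setminus\{p\}$, and it also requires a fourth point of $V$. If $V=\{p,w,v\}$, the only element of $\mathsf{A}(V)$ fixing $p$ is the identity, so $|V|\geq3$ does not suffice. More seriously, suppose $U\cap V=\{p\}$, a case the statement must cover and which (3) relies on. Then every element of $\mathsf{A}(V)$ fixing $p$ fixes $U$ pointwise. Conjugations of your kind therefore act trivially on $\mathsf{A}(U)$, and the deferred case analysis has nothing to work with.

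The missing idea, which is the one in the paper's sketch, is to let the conjugator move $p$. Take $w\in U\setminus\{p,u\}$ and conjugate $(p\,u\,w)$ by a $p$-based cycle of $\mathsf{A}(V)$:
\begin{itemize}
\item if $w\notin V$, use $(p\,v\,z)$ with $z\in V\setminus\{p,v\}$;
\item if $w\in V$, use $(p\,w\,v)$.
\end{itemize}
By $g(x\,y\,t)g^{-1}=(g(x)\,g(y)\,g(t))$, both give $(u\,v\,w)^{\pm1}$. This is a $3$-cycle that avoids $p$ and meets both supports. Then $(p\,u\,v)=(p\,u\,w)(u\,v\,w)$, a rearrangement of the identity in (1), lies in $\langle\mathsf{A}(U),\mathsf{A}(V)\rangle$. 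This settles every mixed generator uniformly, with no hypothesis on $|U\cap V|$ and no need for $|V|\geq4$.
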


\begin{proof}[Sketch]
The alternating group is generated by $3$-cycles, and $(u\,v\,w)=(p\,u\,v)(p\,v\,w)$ when $p,u,v,w$ are distinct.  This proves the first statement.  For the second, choose $p\in U\cap V$. The two groups contain all $3$-cycles based at $p$ inside $U$ and inside $V$; conjugating
one such cycle by another produces a $3$-cycle meeting both supports. The first statement then gives $\mathsf{A}(U\cup V)$. Repeatedly applying the second statement along a spanning tree of the intersection graph gives the third. For diagonal groups, we apply the same words simultaneously in every coordinate.
\end{proof}

At every level, the product of the sign homomorphisms gives $\mathsf{S}_n/\mathsf{A}_n\cong(\mathbb{Z}/2\mathbb{Z})^{V_{n+1}}$. Under the embedding from level $n$ to level $n+1$, the sign vector is mapped by $\overline{M}_{n+1}$.  We will use this observation to define $\varepsilon_{\mathsf{B}}:\mathsf{S}(\mathfrak{T}_{\mathsf{B}})\to
H_0(\mathfrak{T}_{\mathsf{B}};\mathbb{Z}/2\mathbb{Z})$.

\section{Groups of bounded type with germ-defining singularities}
\label{sec:fixed-singularity-presentations}

The purpose of this section is to isolate a subclass of groups of bounded type used in the paper. Let $\mathsf{B}$ be a simple Bratteli diagram with infinite path space $\Omega(\mathsf{B})$.  By Proposition~\ref{prop:simple-minimal-tail-groupoid}, $\Omega(\mathsf{B})$ is a Cantor space.  We denote its tail groupoid by $\mathfrak{T}_{\mathsf{B}}$.

Let $G=\langle S\rangle\leq\operatorname{Homeo}(\Omega(\mathsf{B}))$,
where $S=S^{-1}$ is finite, and put $\mathfrak{G}:=\operatorname{Germ}(G\curvearrowright\Omega(\mathsf{B}))$. We assume that $\mathfrak{T}_{\mathsf{B}}$ is the AF core of $\mathfrak{G}$ and that $\mathfrak{G}\setminus\mathfrak{T}_{\mathsf{B}}$ is discrete.

\subsection{Bounded type and singular germs}
\label{subsec:bounded-type-singular-germs}

We recall the definition of bounded type from \cite{kua25,kua26}.

\begin{defn}[Bounded type]
\label{def:bounded-type-recall}
A tile inflation process on $\mathsf{B}=(V_n,E_n,\mathbf{s},\mathbf{r})$ is of \emph{bounded type} if the cardinalities of the boundaries of the finite tiles are uniformly bounded, there are finitely many boundary points on infinite tiles (by-discrete), and the cardinalities $|V_n|$ and $|E_n|$ are uniformly bounded in $n$.

An inverse semigroup is of bounded type if it is defined by such a tile inflation process.  A group $G=\langle S\rangle$ is of bounded type if, after extending the labels at the boundary points of the infinite tiles when necessary, the tile inflation produces perfectly labeled orbital graphs with labels in $S$.
\end{defn}

\begin{defn}[Finitary transformation]
\label{def:AF-finitary}
A homeomorphism $g$ of $\Omega(\mathsf{B})$ is \emph{finitary} if $g\in\mathsf{S}(\mathfrak{T}_{\mathsf{B}})$. Equivalently, $g$ is a finite prefix permutation at some level and has trivial sections below that level.
\end{defn}

For $\zeta\in\Omega(\mathsf{B})$, let $G_\zeta:=\{g\in G:g(\zeta)=\zeta\}$ and let $G_{(\zeta)}$ be the subgroup of elements which fix pointwise a neighborhood of $\zeta$.  The graph of germs at $\zeta$ is the Schreier graph of $G/G_{(\zeta)}$, and the map to
the orbital graph is a Galois covering with group of deck transformation $G_\zeta/G_{(\zeta)}$. We identify this quotient with the isotropy group
$\mathfrak{G}_\zeta^\zeta$ and denote it by $H_\zeta$.

\begin{defn}
\label{gregular}
A point $\zeta\in\Omega(\mathsf{B})$ is \emph{$G$-regular} if $H_\zeta$ is trivial. Otherwise, we call $\zeta$ \emph{singular}, and the nontrivial elements of $H_\zeta$ are called the singular germs at $\zeta$. 
\end{defn}

We call a point $\zeta\in\Omega(\mathsf{B})$ \emph{$S$-singular} if there exists $s\in S$ such that $s(\zeta)=\zeta$ and $(s,\zeta)\neq(\Id,\zeta)$.

\begin{defn}[Germ-defining singular points]
\label{Germsin}
\textup{\cite[Definition~3.2.1]{JC19}}
Let $\xi$ be a singular point. We call $\xi$ \emph{germ-defining} relative to $S$ if:
\begin{enumerate}
\item\label{Germsin-cond-1} $\xi$ is the only $S$-singular point in the orbit $G\xi$;

\item\label{Germsin-cond-2} if $\zeta\in G\xi$ and $g=s_m\cdots s_1\in G_\zeta$, where
$s_j\in S\cup S^{-1}$, are such that the points
\[\zeta,\quad s_1(\zeta),\quad s_2s_1(\zeta),\quad\ldots,\quad s_{m-1}\cdots s_1(\zeta) \addtag\label{nonloop-cycle}\]
are pairwise distinct, then $(g,\zeta)=(\Id,\zeta)$. The path in \eqref{nonloop-cycle} is called a \emph{nonloop cycle}. 
\end{enumerate}
\end{defn}

\subsection{The finite singular germ condition}
\label{subsec:finite-germ-presentations}

\begin{defn}
\label{def:singular-germ}
Let $G=\langle S\rangle$ be a group of bounded type arising from a tile inflation process over a Bratteli diagram $\mathsf{B}$. Let
$\Xi=\{\xi_1,\ldots,\xi_r\}\subseteq\Omega(\mathsf{B})$. We say that $G$ satisfies the \emph{finite singular germ condition}, with respect to $(\mathsf{B},S,\Xi)$, if the following conditions hold.

\begin{enumerate}[label=\textup{(S\arabic*)},ref=\textup{(S\arabic*)}]
\item\label{cond:finite-singular:S1} $S=S_{\mathrm{fin}}\sqcup S_{\mathrm{sing}}$, where
every element of $S_{\mathrm{fin}}$ is finitary;

\item\label{cond:finite-singular:S2} The set $\Xi$ is exactly the set of boundary points of the infinite tiles, and every $\xi_i\in\Xi$ is germ-defining relative to $S$;

\item\label{cond:finite-singular:S3} there is a map $\nu:S_{\mathrm{sing}}\to\{1,\ldots,r\}$ such that every $s\in S_{\mathrm{sing}}$ fixes $\xi_{\nu(s)}$, $(s,\xi_{\nu(s)})\notin\mathfrak{T}_{\mathsf{B}}$, and $(s,\zeta)\in\mathfrak{T}_{\mathsf{B}}$ for every $\zeta\neq\xi_{\nu(s)}$;

\item\label{cond:finite-singular:S4} $H_i:=G_{\xi_i}/G_{(\xi_i)}$ is finite for every $i$.
\end{enumerate}
\end{defn}

The points $\xi_i$ belong to pairwise distinct $G$-orbits. Indeed, each $\xi_i$ is $S$-singular, while the first condition in Definition~\ref{Germsin} allows only one such point in its orbit.

\begin{rmk}
We do not assume that the Bratteli diagram or the associated automaton is stationary. 
\end{rmk}

\begin{prop}[Representatives of singular germs]
\label{prop:singular-germ-representatives}
For $i=1,\ldots,r$, put $S_{\xi_i}:=S\cap G_{\xi_i}$, $S_{(\xi_i)}:=S\cap G_{(\xi_i)}$, and
$K_i:=\langle S_{\xi_i}\setminus S_{(\xi_i)}\rangle$. Then $G_{\xi_i}=G_{(\xi_i)}K_i$.  Consequently, $K_i/(K_i\cap G_{(\xi_i)})\cong H_i$, and every element of $H_i$ has a representative in $K_i$.

Moreover, $(k,\eta)\in\mathfrak{T}_{\mathsf{B}}$ for every $k\in K_i$ and every $\eta\neq\xi_i$.
\end{prop}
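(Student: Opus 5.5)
We prove $G_{\xi_i}\subseteq G_{(\xi_i)}K_i$ by induction on word length, using the germ-defining property of $\xi_i$. The reverse inclusion is clear, since $K_i\leq G_{\xi_i}$ and $G_{(\xi_i)}\leq G_{\xi_i}$. Fix $g\in G_{\xi_i}$ and write $g=s_m\cdots s_1$ with $s_j\in S$. Put $\zeta_0=\xi_i$ and $\zeta_j=s_j\cdots s_1(\xi_i)$, so $\zeta_m=\xi_i$, and induct on $m$.

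If the points $\zeta_0,\ldots,\zeta_{m-1}$ are pairwise distinct, condition \ref{Germsin-cond-2} of Definition~\ref{Germsin} gives $(g,\xi_i)=(\Id,\xi_i)$, so $g\in G_{(\xi_i)}$. Otherwise choose indices $a<b\le m$ with $\zeta_a=\zeta_b$ and $b-a$ minimal, and write $g=u\,w\,v$ with $v=s_a\cdots s_1$, $w=s_b\cdots s_{a+1}$ and $u=s_m\cdots s_{b+1}$. Then $w\in G_{\zeta_a}$, and the subpath from $\zeta_a$ traced by $w$ is a nonloop cycle by minimality. Hence $(w,\zeta_a)$ is trivial, and consequently $(g,\xi_i)=(uv,\xi_i)$.

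There are two cases. If $b-a\ge2$, then $uv$ is a shorter word fixing $\xi_i$ with the same germ, and the induction hypothesis gives $uv\in G_{(\xi_i)}K_i$. It follows that $g\in (uv)G_{(\xi_i)}\subseteq G_{(\xi_i)}K_iG_{(\xi_i)}=G_{(\xi_i)}K_i$, where the last equality holds because $G_{(\xi_i)}$ is normal in $G_{\xi_i}$. If $b-a=1$, the letter $s_b$ fixes $\zeta_a$. When the germ $(s_b,\zeta_a)$ is trivial we delete the letter and proceed as before. When it is nontrivial, $\zeta_a$ is $S$-singular and lies in the orbit $G\xi_i$, so by condition \ref{Germsin-cond-1} we have $\zeta_a=\xi_i$, and then $s_b\in S_{\xi_i}\setminus S_{(\xi_i)}\subseteq K_i$. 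In this situation $v$ and $u$ each fix $\xi_i$ and are shorter than $g$, so induction gives $v,u\in G_{(\xi_i)}K_i$, and hence $g=us_bv\in G_{(\xi_i)}K_i$ by normality again. The base case $m=0$ is trivial.

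The main care is in the bookkeeping: a loop of length one is exactly where a singular generator can act, and condition \ref{Germsin-cond-1} forces that point to be $\xi_i$ itself. With $G_{\xi_i}=G_{(\xi_i)}K_i$ established, the second isomorphism theorem gives $K_i/(K_i\cap G_{(\xi_i)})\cong G_{\xi_i}/G_{(\xi_i)}=H_i$. In particular every element of $H_i$ has a representative in $K_i$.

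For the final claim, let $s\in S_{\xi_i}\setminus S_{(\xi_i)}$. The germ $(s,\xi_i)$ is nontrivial, so $s$ is not finitary and therefore $s\in S_{\mathrm{sing}}$ by \ref{cond:finite-singular:S1}. By \ref{cond:finite-singular:S3}, $s$ fixes $\xi_{\nu(s)}$, and $(s,\xi_i)\in\mathfrak{T}_{\mathsf{B}}$ unless $\xi_i=\xi_{\nu(s)}$. A nontrivial germ at a fixed point cannot lie in the principal groupoid $\mathfrak{T}_{\mathsf{B}}$, so $\nu(s)=i$. Then $(s,\eta)\in\mathfrak{T}_{\mathsf{B}}$ for every $\eta\neq\xi_i$. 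Now take $k=t_\ell\cdots t_1\in K_i$ with each $t_j$ a generator of $K_i$ or its inverse. Since each $t_j$ fixes $\xi_i$ and is a homeomorphism, the point $\eta\neq\xi_i$ is never moved to $\xi_i$ along the way. So every germ $(t_j,t_{j-1}\cdots t_1\eta)$ is taken at a point different from $\xi_i$ and lies in $\mathfrak{T}_{\mathsf{B}}$. Their product $(k,\eta)$ therefore lies in the subgroupoid $\mathfrak{T}_{\mathsf{B}}$.
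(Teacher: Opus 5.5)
Your proof is correct. For the main identity $G_{\xi_i}=G_{(\xi_i)}K_i$ it takes a different route from the paper. The paper does not prove this identity; it imports it from Cantu (Proposition~3.2.2 and Corollary~3.2.3 of his thesis). You instead give a self-contained loop-erasure induction on word length.

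Your version shows exactly where each germ-defining axiom enters:
\begin{itemize}
\item condition~\ref{Germsin-cond-2} of Definition~\ref{Germsin} erases nonloop cycles;
\item condition~\ref{Germsin-cond-1} forces every length-one loop with a nontrivial germ to sit at $\xi_i$ itself, so only letters of $S_{\xi_i}\setminus S_{(\xi_i)}$ survive;
\item normality of $G_{(\xi_i)}$ in $G_{\xi_i}$ closes the induction.
\end{itemize}
The citation is shorter, but it depends on matching the paper's definitions with Cantu's.

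One bookkeeping point needs fixing. Your first case, and the sentence just before your split into $b-a\geq2$ and $b-a=1$, must be restricted to cycles of length at least $2$. Read literally with $m=1$, condition~\ref{Germsin-cond-2} would make every one-letter word fixing $\xi_i$ have trivial germ, which contradicts \ref{cond:finite-singular:S3}. So a ``nonloop cycle'' must mean $m\geq2$. Your $b-a=1$ branch (with $a=0$, $b=1$ when $m=1$) already handles these words correctly, so nothing else changes.

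For the final assertion you follow the paper's argument, but you also supply a step the paper leaves implicit. Every generator $s$ of $K_i$ lies in $S_{\mathrm{sing}}$ with $\nu(s)=i$, because a nontrivial germ at a fixed point cannot lie in the principal groupoid $\mathfrak{T}_{\mathsf{B}}$. Without this, \ref{cond:finite-singular:S3} only gives AF germs away from $\xi_{\nu(s)}$, not away from $\xi_i$.
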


\begin{proof}
The first statement follows from \cite[Proposition~3.2.2 and Corollary~3.2.3]{JC19}. Let
$k=s_m\cdots s_1$, where $s_j\in S_{\xi_i}\setminus S_{(\xi_i)}$, and let $\eta\neq\xi_i$.
Put $\eta_0=\eta$ and $\eta_j=s_j\cdots s_1(\eta)$. Since every $s_j$ fixes $\xi_i$, the equality $\eta_j=\xi_i$ would imply $\eta=\xi_i$. Thus $\eta_j\neq\xi_i$ for every $j$.  By \ref{cond:finite-singular:S3}, every germ $(s_j,\eta_{j-1})$ belongs to $\mathfrak{T}_{\mathsf{B}}$, and hence so does their product $(k,\eta)$.
\end{proof}

\subsection{Pathwise sections and shifted groups of germs}
\label{subsec:pathwise-sections-shifted-germs}

Let $\xi=e_1e_2\cdots\in\Omega(\mathsf{B})$. We write $\xi_n=e_1\cdots e_n$ for its $n$-th truncation and $\xi^{(n)}=e_{n+1}e_{n+2}\cdots$ for its shifted tail. For $v\in V_{n+1}$, put
$\Omega_v^{(n)}(\mathsf{B})=\{e_{n+1}e_{n+2}\cdots:\mathbf{s}(e_{n+1})=v\}$. If $\gamma\in\Omega_n(\mathsf{B})$ ends at $v$, the \emph{prefix chart} is $\kappa_\gamma:\Omega_v^{(n)}(\mathsf{B})\to[\gamma]$, $\kappa_\gamma(w)=\gamma w$.

\begin{rmk}
    If the automaton described in Proposition~\ref{Automaton} is \emph{stationary} \cite[Definition~3.6]{kua26} then the map $\kappa_{\gamma}$ is exactly the local inverse of the iterated shift map on a subshift of finite type, and is used to describe self-similar inverse semigroups (which is a broader class containing the class of stationary inverse semigroups of bounded type). See \cite[Sections~5 and~6]{nekrash2025}. 
\end{rmk}

\begin{defn}[Pathwise section]
\label{def:pathwise-section}
Let $g\in G$, $\xi\in\Omega(\mathsf{B})$, and $n\geq1$. On a sufficiently small neighborhood of $\xi^{(n)}$, we define
\[\operatorname{sec}_{\xi,n}(g):=\kappa_{g(\xi)_n}^{-1}g\kappa_{\xi_n}.\]
\end{defn}
Thus $\operatorname{sec}_{\xi,n}(g)$ is the local transformation satisfying
$g(\xi_nw)=g(\xi)_n\operatorname{sec}_{\xi,n}(g)(w)$ near $\xi^{(n)}$.

\begin{rmk}
The finite automaton section at $\xi_n$ may contain several states. Nevertheless, $\omega$-determinism implies that the complete input $\xi$ selects a unique state if we take sections along the finite truncations of $\xi$. This state represents the local transformation in
Definition~\ref{def:pathwise-section}; see \cite[Definition~3.8]{kua26}.
\end{rmk}

\begin{lemma}[Pathwise product rule]
\label{lem:pathwise-product-rule}
For $g,h\in G$ and $n\geq1$, we have
\[\operatorname{sec}_{\xi,n}(gh)=\operatorname{sec}_{h(\xi),n}(g)\operatorname{sec}_{\xi,n}(h).\]
In particular, if $g,h\in G_\xi$, then $\operatorname{sec}_{\xi,n}(gh)=\operatorname{sec}_{\xi,n}(g)\operatorname{sec}_{\xi,n}(h)$.
\end{lemma}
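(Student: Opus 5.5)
The product rule follows directly from Definition~\ref{def:pathwise-section}, by inserting the identity $\kappa_{h(\xi)_n}\kappa_{h(\xi)_n}^{-1}$ in the middle of the composite. First I fix the domains. Since $h$ is a homeomorphism, there is a neighborhood $W$ of $\xi^{(n)}$ in $\Omega_{\mathbf{r}(\xi_n)}^{(n)}(\mathsf{B})$ such that $h(\xi_nW)\subseteq[h(\xi)_n]$. Shrinking $W$ further, we may assume that $h(\xi_nW)$ lies in the neighborhood of $h(\xi)$ on which $g$ maps into $[gh(\xi)_n]$. On $h(\xi_nW)$ the chart inverse $\kappa_{h(\xi)_n}^{-1}$ is defined. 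Moreover, $\kappa_{h(\xi)_n}\kappa_{h(\xi)_n}^{-1}$ is the identity on $[h(\xi)_n]$. The tail of $h(\xi)$ is $h(\xi)^{(n)}=\operatorname{sec}_{\xi,n}(h)(\xi^{(n)})$, so $\operatorname{sec}_{\xi,n}(h)(W)$ is a neighborhood of $h(\xi)^{(n)}$. After shrinking, it lies in the domain of $\operatorname{sec}_{h(\xi),n}(g)$.

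With these domains fixed, I compute on $W$:
\[
\operatorname{sec}_{\xi,n}(gh)=\kappa_{gh(\xi)_n}^{-1}\,g\,h\,\kappa_{\xi_n}
=\kappa_{g(h(\xi))_n}^{-1}\,g\,\kappa_{h(\xi)_n}\,\kappa_{h(\xi)_n}^{-1}\,h\,\kappa_{\xi_n}.
\]
The first factor $\kappa_{g(h(\xi))_n}^{-1}g\kappa_{h(\xi)_n}$ is $\operatorname{sec}_{h(\xi),n}(g)$, evaluated near $h(\xi)^{(n)}$. The second factor $\kappa_{h(\xi)_n}^{-1}h\kappa_{\xi_n}$ is $\operatorname{sec}_{\xi,n}(h)$, evaluated near $\xi^{(n)}$. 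This gives the identity as an equality of local transformations near $\xi^{(n)}$. This is the sense in which pathwise sections are defined, so it is also an equality of germs at $\xi^{(n)}$.

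For the special case, suppose $g,h\in G_\xi$. Then $h(\xi)=\xi$, so $\operatorname{sec}_{h(\xi),n}(g)=\operatorname{sec}_{\xi,n}(g)$, and the formula reduces to $\operatorname{sec}_{\xi,n}(gh)=\operatorname{sec}_{\xi,n}(g)\operatorname{sec}_{\xi,n}(h)$.

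There is no real obstacle. The only point requiring care is the domain bookkeeping above, in particular checking that $\kappa_{h(\xi)_n}\kappa_{h(\xi)_n}^{-1}$ acts as the identity on the relevant image $h(\xi_nW)$. That holds because this image was arranged to lie inside the cylinder $[h(\xi)_n]$.
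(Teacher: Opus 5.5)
Your proposal is correct and matches the paper's proof, which simply inserts $\kappa_{h(\xi)_n}\kappa_{h(\xi)_n}^{-1}$ between $g$ and $h$ and regroups the two factors. Your domain bookkeeping makes explicit the ``sufficiently small neighborhood'' that Definition~\ref{def:pathwise-section} leaves implicit.
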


\begin{proof}
The formula follows by inserting the prefix charts in the definition and cancelling $\kappa_{h(\xi)_n}\kappa_{h(\xi)_n}^{-1}$.
\end{proof}

Fix $i$ and put $\xi_{i,n}:=(\xi_i)_n$ and $\kappa_{i,n}:=\kappa_{\xi_{i,n}}$.

\begin{defn}
\label{def:shifted-germ-group}
For $n\geq1$, the \emph{shifted group of germs} $H_{i,n}$ is
\[H_{i,n}:=\left\{(\operatorname{sec}_{\xi_i,n}(g),\xi_i^{(n)}):g\in G_{\xi_i}\right\}.\]
\end{defn}

\begin{prop}[Canonical restriction of singular germs]
\label{prop:canonical-singular-restriction}
For every $i$ and $n\geq1$, the map $\operatorname{res}_{i,n}:H_i\to H_{i,n}$ defined by
\[\operatorname{res}_{i,n}((g,\xi_i))=(\operatorname{sec}_{\xi_i,n}(g),\xi_i^{(n)})\]
is a group isomorphism. Equivalently, $\operatorname{res}_{i,n}((g,\xi_i))=
(\kappa_{i,n}^{-1}g\kappa_{i,n},\xi_i^{(n)})$. In particular, $|H_{i,n}|=|H_i|$.
\end{prop}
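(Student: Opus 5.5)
The plan is to verify directly that $\operatorname{res}_{i,n}$ is well defined, multiplicative, injective and surjective. The working tool is the pathwise product rule, Lemma~\ref{lem:pathwise-product-rule}, together with the observation that the prefix chart $\kappa_{i,n}$ is a homeomorphism from $\Omega^{(n)}_{\mathbf{r}(\xi_{i,n})}(\mathsf{B})$ onto the clopen cylinder $[\xi_{i,n}]$. It sends $\xi_i^{(n)}$ to $\xi_i$.

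First we settle the equivalent formula. For $g\in G_{\xi_i}$ we have $g(\xi_i)=\xi_i$, so $g(\xi_i)_n=\xi_{i,n}$. Definition~\ref{def:pathwise-section} then reads $\operatorname{sec}_{\xi_i,n}(g)=\kappa_{i,n}^{-1}g\kappa_{i,n}$. This makes sense on a neighborhood of $\xi_i^{(n)}$, namely on $\kappa_{i,n}^{-1}\bigl([\xi_{i,n}]\cap g^{-1}[\xi_{i,n}]\bigr)$, which is open because $g$ is continuous.

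Next comes well-definedness. Suppose $(g,\xi_i)=(h,\xi_i)$, so $g=h$ on a neighborhood $U$ of $\xi_i$. Shrinking $U$ into $[\xi_{i,n}]$, the two conjugates $\kappa_{i,n}^{-1}g\kappa_{i,n}$ and $\kappa_{i,n}^{-1}h\kappa_{i,n}$ agree on the neighborhood $\kappa_{i,n}^{-1}(U)$ of $\xi_i^{(n)}$. Hence they have the same germ there, and the map depends only on the germ.

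For the homomorphism property, Lemma~\ref{lem:pathwise-product-rule} gives $\operatorname{sec}_{\xi_i,n}(gh)=\operatorname{sec}_{\xi_i,n}(g)\operatorname{sec}_{\xi_i,n}(h)$ for $g,h\in G_{\xi_i}$. Passing to germs at $\xi_i^{(n)}$, which is fixed by every section, yields multiplicativity. This also shows that $H_{i,n}$ is a group, since it is the image of a group under a multiplicative map. Surjectivity holds by the definition of $H_{i,n}$.

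The step I expect to need the most care is injectivity. Suppose $\operatorname{res}_{i,n}((g,\xi_i))$ is trivial. Then $\kappa_{i,n}^{-1}g\kappa_{i,n}$ is the identity on some neighborhood $W$ of $\xi_i^{(n)}$. The set $\kappa_{i,n}(W)$ is an open neighborhood of $\xi_i$ in $[\xi_{i,n}]$, and hence in $\Omega(\mathsf{B})$, because cylinders are open. On this set $g=\kappa_{i,n}\circ\mathrm{id}\circ\kappa_{i,n}^{-1}$ is the identity, so $(g,\xi_i)$ is trivial. The essential point is that prefix charts are homeomorphisms onto open sets, so germs transport faithfully in both directions.

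Finally, $|H_{i,n}|=|H_i|$ is immediate from bijectivity. By \ref{cond:finite-singular:S4} this common order is finite.
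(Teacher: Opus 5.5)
Your proposal is correct and follows the paper's proof essentially step by step. Well-definedness comes from conjugating agreement on a neighborhood through $\kappa_{i,n}$, multiplicativity from Lemma~\ref{lem:pathwise-product-rule}, surjectivity from the definition of $H_{i,n}$, and injectivity from transporting the trivial shifted germ back to a neighborhood of $\xi_i$. Your check of the equivalent formula via $g(\xi_i)_n=\xi_{i,n}$ is a small explicit step that the paper leaves implicit.
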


\begin{proof}
If $(g,\xi_i)=(h,\xi_i)$, then $g$ and $h$ agree on a neighborhood of $\xi_i$. Conjugating by $\kappa_{i,n}$ shows that their shifted local transformations agree near $\xi_i^{(n)}$, so the map is well defined. Lemma~\ref{lem:pathwise-product-rule} shows that it is a homomorphism, and
surjectivity follows from the definition of $H_{i,n}$.

Suppose that $\operatorname{res}_{i,n}((g,\xi_i))$ is trivial. Then $\kappa_{i,n}^{-1}g\kappa_{i,n}$ fixes a neighborhood of $\xi_i^{(n)}$. Since $g$ fixes $\xi_i$, it follows that $g$ fixes a cylinder neighborhood of $\xi_i$. Thus $g\in G_{(\xi_i)}$, and the map is injective.
\end{proof}

\begin{cor}[One-step restriction]
\label{cor:one-step-singular-restriction}
For every $i$ and $n\geq1$, the map
\[\rho_{i,n}:=\operatorname{res}_{i,n+1}\circ\operatorname{res}_{i,n}^{-1}:H_{i,n}\longrightarrow H_{i,n+1}\]
is a canonical isomorphism.  In the automaton, it follows the unique transition along the next edge of $\xi_i$.
\end{cor}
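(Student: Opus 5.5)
The map $\rho_{i,n}$ is a composition of two isomorphisms, so the plan is to establish that it is a well-defined group isomorphism, and then identify it concretely with the automaton transition. By Proposition~\ref{prop:canonical-singular-restriction}, both $\operatorname{res}_{i,n}:H_i\to H_{i,n}$ and $\operatorname{res}_{i,n+1}:H_i\to H_{i,n+1}$ are group isomorphisms. Hence $\operatorname{res}_{i,n}^{-1}$ exists and is an isomorphism, and $\rho_{i,n}=\operatorname{res}_{i,n+1}\circ\operatorname{res}_{i,n}^{-1}$ is an isomorphism. It is canonical because neither restriction map involves any choice: each depends only on $\xi_i$ and the prefix charts.

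For the concrete description, write $\xi_i=e_1e_2\cdots$ and let $v=\mathbf{r}(e_{n+1})$. For $w\in\Omega^{(n+1)}_{v}(\mathsf{B})$ we have $\kappa_{i,n+1}(w)=\xi_{i,n}e_{n+1}w=\kappa_{i,n}(e_{n+1}w)$, so $\kappa_{i,n+1}=\kappa_{i,n}\circ\kappa_{e_{n+1}}$. Here $\kappa_{e_{n+1}}:w\mapsto e_{n+1}w$ is the one-edge prefix chart from $\Omega^{(n+1)}_v(\mathsf{B})$ into $\Omega^{(n)}_{\mathbf{s}(e_{n+1})}(\mathsf{B})$. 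Take $(g,\xi_i)\in H_i$, so that $g$ fixes $\xi_i$. Then
\[\kappa_{i,n+1}^{-1}g\kappa_{i,n+1}=\kappa_{e_{n+1}}^{-1}\bigl(\kappa_{i,n}^{-1}g\kappa_{i,n}\bigr)\kappa_{e_{n+1}}\]
on a small neighborhood of $\xi_i^{(n+1)}$. Put $h=\operatorname{sec}_{\xi_i,n}(g)$. It fixes $\xi_i^{(n)}=e_{n+1}\xi_i^{(n+1)}$, so near this point $h$ preserves the first edge $e_{n+1}$. Consequently
\[\rho_{i,n}\bigl((h,\xi_i^{(n)})\bigr)=(\kappa_{e_{n+1}}^{-1}h\kappa_{e_{n+1}},\xi_i^{(n+1)}).\]
This is the one-step pathwise section of $h$ along $e_{n+1}$, which is what the formula asserts.

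It remains to check that $\rho_{i,n}$ is well defined directly at the level of germs, independently of representatives. If $h$ and $h'$ agree near $\xi_i^{(n)}$, then their conjugates by $\kappa_{e_{n+1}}$ agree near $\xi_i^{(n+1)}$. The step needing care is the translation into the automaton language. By Proposition~\ref{Automaton} and the remark after Definition~\ref{def:pathwise-section}, $\omega$-determinism means that the full input $\xi_i$ selects a unique state at each level. The local transformation $h$ is represented by the state reached after reading $\xi_{i,n}$. Reading the next input letter $e_{n+1}$ produces output $e_{n+1}$, because $h$ fixes $\xi_i^{(n)}$. The transition then goes to the state representing $\kappa_{e_{n+1}}^{-1}h\kappa_{e_{n+1}}$, and this transition is unique. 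So $\rho_{i,n}$ is obtained by following the unique transition along $e_{n+1}$. For products in $G_{\xi_i}$, this is compatible with Lemma~\ref{lem:pathwise-product-rule}, since each generator's state is followed separately and the results are composed.
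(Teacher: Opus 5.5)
Your proposal is correct and follows the paper, which treats this as an immediate consequence of Proposition~\ref{prop:canonical-singular-restriction}: $\rho_{i,n}$ is the composite of the two isomorphisms $\operatorname{res}_{i,n}^{-1}$ and $\operatorname{res}_{i,n+1}$. Your factorization $\kappa_{i,n+1}=\kappa_{i,n}\circ\kappa_{e_{n+1}}$, which gives $\rho_{i,n}((h,\xi_i^{(n)}))=(\kappa_{e_{n+1}}^{-1}h\kappa_{e_{n+1}},\xi_i^{(n+1)})$, is a correct explicit version of the automaton description that the paper states without proof.
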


\begin{rmk}
The transformations representing $H_{i,n}$ may depend on $n$ in a nonstationary manner. We only use the finiteness of every $H_{i,n}$ and the canonical restriction isomorphisms between them.
\end{rmk}

\subsection{Action of the topological full group}
\label{subsec:factorization-singular-germs}

\begin{prop}[Singular factorization]
\label{prop:singular-factorization}
Let $G$ be a group of bounded type satisfying the finite singular germ condition with respect to $(\mathsf{B},S,\Xi)$. Every germ $\gamma\in\mathfrak{G}$ satisfies exactly one of the following alternatives:
\begin{enumerate}
\item $\gamma\in\mathfrak{T}_{\mathsf{B}}$;

\item there are a unique $i$, a unique $h\in H_i\setminus\{1\}$, and AF arrows $\beta:\mathbf{s}(\gamma)\to\xi_i$ and $\alpha:\xi_i\to\mathbf{r}(\gamma)$ such that $\gamma=\alpha h\beta$.
\end{enumerate}
\end{prop}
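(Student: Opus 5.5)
We first prove existence by induction on word length, and then prove uniqueness from the germ-defining condition.

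\emph{Existence.} Every germ of $\mathfrak{G}$ has the form $(g,x)$ with $g=s_m\cdots s_1$, $s_j\in S$. Hence it is a product of generator germs $(s_j,x_{j-1})$, where $x_j=s_j\cdots s_1(x)$. By \ref{cond:finite-singular:S1} and \ref{cond:finite-singular:S3}, each factor is AF, except when $s_j\in S_{\mathrm{sing}}$ and $x_{j-1}=\xi_{\nu(s_j)}$. In that case the factor is $(s_j,\xi_i)\in H_i\setminus\{1\}$. So $\gamma$ is a product of AF arrows and elements of the isotropy groups $H_i$ at the points $\xi_i$. We reduce this product by induction on the number of singular factors.

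\begin{itemize}
\item If there is no singular factor, $\gamma\in\mathfrak{T}_{\mathsf{B}}$.
\item If there is exactly one, we get $\gamma=\alpha h\beta$ directly.
\item If there are two consecutive ones, $h'\delta h$ with $\delta$ AF, then $\delta$ goes from $\xi_i$ to $\xi_{i'}$. The points $\xi_i$ lie in distinct $G$-orbits, and AF arrows stay in a $G$-orbit, so $i=i'$ and $\delta\in(\mathfrak{T}_{\mathsf{B}})_{\xi_i}^{\xi_i}$. The tail groupoid is principal, so $\delta=1_{\xi_i}$ and $h'\delta h=h'h\in H_i$.
\end{itemize}

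Merging repeatedly leaves at most one singular factor $h\in H_i$. If this factor is trivial, it is a unit and $\gamma$ is AF. This proves existence.

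\emph{Exclusivity.} We must show that $\alpha h\beta\notin\mathfrak{T}_{\mathsf{B}}$ when $h\neq 1$. If $\alpha h\beta$ were AF, then $h=\alpha^{-1}(\alpha h\beta)\beta^{-1}$ would be an AF arrow in the isotropy at $\xi_i$. By principality of $\mathfrak{T}_{\mathsf{B}}$, this forces $h=1_{\xi_i}$, a contradiction.

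\emph{Uniqueness.} The index $i$ is unique because $\xi_i$ lies in the $G$-orbit of $\mathbf{s}(\gamma)$, and the $\xi_i$ lie in distinct orbits. Now suppose $\alpha h\beta=\alpha' h'\beta'$, so that
\[h=(\alpha^{-1}\alpha')\,h'\,(\beta'\beta^{-1}).\]
Here $\beta'\beta^{-1}$ and $\alpha^{-1}\alpha'$ are AF arrows from $\xi_i$ to $\xi_i$. By principality they are units, so $h=h'$. The arrows $\alpha,\beta$ themselves are generally not unique, but their AF classes are determined: for example, $\beta$ is the unique AF arrow from $\mathbf{s}(\gamma)$ to $\xi_i$. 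So the statement's uniqueness of $i$ and $h$ holds.

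\emph{Main obstacle.} The key fact used throughout is that distinct AF arrows with the same endpoints cannot exist. This holds because $\mathfrak{T}_{\mathsf{B}}$ is an equivalence relation, so all of its isotropy groups are trivial. The germ-defining hypothesis \ref{cond:finite-singular:S2} is not actually needed beyond knowing that the $\xi_i$ lie in distinct orbits; \ref{cond:finite-singular:S3} is what identifies the non-AF generator germs.
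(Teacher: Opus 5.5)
Your proof is correct and follows essentially the same route as the paper. You decompose $\gamma$ into generator germs, use \ref{cond:finite-singular:S3} to place the non-AF factors at the points $\xi_i$, and use the distinct-orbit property to put them all at one $\xi_i$. Principality of $\mathfrak{T}_{\mathsf{B}}$ then kills the AF pieces in between and gives uniqueness; the paper does the merging in one pass rather than by induction.

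Your explicit check that alternative 2 excludes alternative 1 is a small improvement, since the paper leaves it implicit. The one inaccuracy is the remark that $\alpha,\beta$ are ``generally not unique'': as arrows of the principal groupoid $\mathfrak{T}_{\mathsf{B}}$ they are unique, and the paper proves $\alpha=\alpha'$, $\beta=\beta'$. Only the words representing them fail to be unique.
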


\begin{proof}
Write $\gamma=(g,x)$, where $g=s_m\cdots s_1$ and $s_j\in S$, and put $x_0=x$ and $x_j=s_j\cdots s_1(x)$. Let $J:=\{j:(s_j,x_{j-1})\notin\mathfrak{T}_{\mathsf{B}}\}$. If $J$ is empty, every factor germ is AF, and so is $\gamma$.

Suppose $J=\{j_1<\cdots<j_\ell\}$. By \ref{cond:finite-singular:S3}, for every $t$ there is an index $i_t$ such that $x_{j_t-1}=x_{j_t}=\xi_{i_t}$.  All these points belong to the same
$G$-orbit.  Since the points of $\Xi$ belong to pairwise distinct $G$-orbits, all $i_t$ are equal; denote their common value by $i$.

Put $g_-:=s_{j_1-1}\cdots s_1$ and $g_+:=s_m\cdots s_{j_\ell+1}$, where an empty product is the identity. Then $\beta:=(g_-,x)$ and $\alpha:=(g_+,\xi_i)$ are AF arrows with the
required source and range.

For $t=1,\ldots,\ell-1$, put $w_t:=s_{j_{t+1}-1}\cdots s_{j_t+1}$.  This word maps $\xi_i$ to itself, and every factor germ in $(w_t,\xi_i)$ belongs to $\mathfrak{T}_{\mathsf{B}}$.  Hence $(w_t,\xi_i)$ is isotropy in the principal groupoid $\mathfrak{T}_{\mathsf{B}}$, and therefore
$(w_t,\xi_i)=(\Id,\xi_i)$.

Let $h_t:=(s_{j_t},\xi_i)\in H_i$. We obtain
\[
\gamma=\alpha h_\ell(w_{\ell-1},\xi_i)h_{\ell-1}\cdots(w_1,\xi_i)h_1\beta
        =\alpha h_\ell\cdots h_1\beta.
\]
Put $h=h_\ell\cdots h_1$.  If $h=1$, then $\gamma$ is AF; otherwise, the second alternative holds.

Now we prove uniqueness. Suppose $\gamma=\alpha h\beta=\alpha'h'\beta'$, where $h\in H_i\setminus\{1\}$ and $h'\in H_k\setminus\{1\}$.  The source of $\gamma$ is tail equivalent to both $\xi_i$ and $\xi_k$, so $i=k$. Since $\mathfrak{T}_{\mathsf{B}}$ is principal, the AF arrows from the source of $\gamma$ to $\xi_i$ and from $\xi_i$ to the range of $\gamma$
are unique. Thus $\alpha=\alpha'$, $\beta=\beta'$, and $h=h'$.
\end{proof}

For $\gamma\in\Omega_n(\mathsf{B})$ and a clopen set $D\subseteq\Omega_{\mathbf{r}(\gamma)}^{(n)}(\mathsf{B})$, write $[\gamma;D]:=\{\gamma w:w\in D\}$.

Let $h\in H_{i,n}$ and put $\gamma_i=\xi_{i,n}$. Choose $\widehat{h}\in G_{\xi_i}$ such that
$\operatorname{res}_{i,n}((\widehat{h},\xi_i))=h$. After restricting to a sufficiently small clopen neighborhood $D$ of $\xi_i^{(n)}$, the map 
\[\widetilde{h}:=
        \left.\kappa_{\gamma_i}^{-1}\widehat{h}\kappa_{\gamma_i}\right|_D:D\to D'\]
represents $h$, and $\widehat{h}(\gamma_iw)=\gamma_i\widetilde{h}(w)$ for $w\in D$.

\begin{prop}[Action of the full group]
\label{prop:finite-boundary-normal-form}
Let $G$ be a group of bounded type satisfying the finite singular germ condition with respect to $(\mathsf{B},S,\Xi)$.  Then every $g\in\mathsf{F}(\mathfrak{G})$ admits, at one common level $n$, paths $\gamma_j,\eta_j\in\Omega_n(\mathsf{B})$, clopen sets $D_j\subseteq\Omega_{\mathbf{r}(\gamma_j)}^{(n)}(\mathsf{B})$, and local maps
$\widetilde{h}_j:D_j\to\widetilde{h}_j(D_j)$ such that:
\begin{enumerate}
\item the sets $[\gamma_j;D_j]$ form a partition of $\Omega(\mathsf{B})$;

\item the sets $[\eta_j;\widetilde{h}_j(D_j)]$ form a partition of $\Omega(\mathsf{B})$;

\item $\mathbf{r}(\eta_j)=\mathbf{r}(\gamma_j)$;

\item $\widetilde{h}_j$ is either the identity or a local representative of a germ $h_j\in H_{i_j,n}$ with $\mathbf{r}(\gamma_j)=\mathbf{r}(\xi_{i_j,n})$;

\item \label{item:boundary normal form}$g(\gamma_jw)=\eta_j\widetilde{h}_j(w)$ for $w\in D_j$.
\end{enumerate}
At level $n$, only elements of the set $\{1\}\sqcup\bigsqcup_{i=1}^rH_{i,n}$ occur as singular labels.
\end{prop}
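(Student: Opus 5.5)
Fix $g\in\mathsf{F}(\mathfrak{G})$. The plan is to cover $\Omega(\mathsf{B})$ by finitely many clopen sets on which $g$ has a controlled germ, using Proposition~\ref{prop:singular-factorization} pointwise and compactness, and then to refine everything to one common level $n$. First, $\mathfrak{G}\setminus\mathfrak{T}_{\mathsf{B}}$ is discrete and the full bisection of $g$ is compact. Hence only finitely many points $x$ have $(g,x)\notin\mathfrak{T}_{\mathsf{B}}$. By Proposition~\ref{prop:singular-factorization}, each such germ is $\alpha h\beta$ with $h\in H_i\setminus\{1\}$. In particular its source lies in the tail class of some $\xi_i$, and $x$ is tail equivalent to $\xi_i$.

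\emph{Non-AF points.} Fix one such $x$ and write $(g,x)=\alpha h\beta$ with $\beta:x\to\xi_i$ and $\alpha:\xi_i\to g(x)$. Choose $n$ large enough that $x$ and $\xi_i$ agree after level $n$, and likewise $g(x)$ and $\xi_i$. Then $x=\gamma\,\xi_i^{(n)}$ and $g(x)=\eta\,\xi_i^{(n)}$ with $\mathbf{r}(\gamma)=\mathbf{r}(\eta)=\mathbf{r}(\xi_{i,n})$. The arrows $\beta$ and $\alpha$ are realized locally by the prefix replacements $Z(\xi_{i,n},\gamma)$ and $Z(\eta,\xi_{i,n})$. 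Pick $\widehat h\in G_{\xi_i}$ representing $h$. By Proposition~\ref{prop:canonical-singular-restriction} its section $\widetilde h=\kappa_{\xi_{i,n}}^{-1}\widehat h\kappa_{\xi_{i,n}}$ represents $\operatorname{res}_{i,n}(h)\in H_{i,n}$ on a small clopen $D\ni\xi_i^{(n)}$. Composing, we get $g(\gamma w)=\eta\widetilde h(w)$ for $w$ in a clopen $D\subseteq\Omega^{(n)}_{\mathbf{r}(\gamma)}(\mathsf{B})$, after shrinking $D$ so that the germ equality at $x$ holds on all of $[\gamma;D]$. Increasing $n$ is harmless: by Corollary~\ref{cor:one-step-singular-restriction}, the label at level $n+1$ is just $\rho_{i,n}$ of the label at level $n$. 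One then splits $[\gamma;D]$ according to the next edge; only the piece containing $x$ carries the singular label, and the remaining pieces are handled as AF points.

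\emph{AF points.} At any other point $y$, the germ $(g,y)$ lies in the open set $\mathfrak{T}_{\mathsf{B}}$. So it lies in some $Z(\eta,\gamma)$ contained in the bisection of $g$, which gives $g(\gamma w)=\eta w$ on a cylinder. Here $\widetilde h_j=\mathrm{id}$ and $D$ is the full tail space.

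\emph{Gluing.} By compactness, finitely many such clopen pieces cover $\Omega(\mathsf{B})$. We pass to a common level $n$ by extending each cylinder, noting that identity pieces stay identity and singular pieces transform via $\rho_{i,n}$. We then refine to a disjoint partition; the images form a partition because $g$ is a bijection. Items (1)--(5) are then immediate. The final claim, that only labels in $\{1\}\sqcup\bigsqcup_i H_{i,n}$ occur at level $n$, holds by construction.

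\emph{Main obstacle.} The delicate step is making the singular pieces compatible with a common level. When we refine $[\gamma;D]$ into cylinders $[\gamma e;\cdot]$, the local map $\widetilde h$ need not preserve prefixes. I would handle this by not refining the singular pieces into cylinders at all, keeping them in the form $[\gamma;D]$. I would refine only the AF pieces, and intersect them with the complement of the finitely many singular sets, which are clopen. This is exactly why the statement allows general clopen $D_j$ rather than full tail spaces.
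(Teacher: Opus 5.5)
Your outline follows the paper's own proof. You describe $g$ near each point (AF germs by prefix replacements, non-AF germs via Proposition~\ref{prop:singular-factorization} and $\operatorname{res}_{i,n}$), take a finite subcover, and pass to one common level, transporting singular labels by the maps $\rho_{i,k}$. The step you single out as the main obstacle is a genuine gap, and your fix does not close it. The statement demands $\gamma_j,\eta_j\in\Omega_n(\mathsf{B})$ for a single $n$, so a singular piece $[\gamma;D]$ with $\gamma\in\Omega_{n_x}(\mathsf{B})$ cannot be ``kept'' once $n>n_x$. If you instead fix $n$ first for the finitely many singular pieces, an AF piece $g(\alpha u)=\beta u$ with $\alpha,\beta\in\Omega_{n'}(\mathsf{B})$, $n'>n$, can be rewritten at level $n$ with $\widetilde{h}_j=\Id$ only when $\alpha$ and $\beta$ agree after level $n$. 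AF pieces can only be pushed deeper, and singular pieces can only be refined by shrinking around $x$. The leftover (your ``remaining pieces handled as AF points'') is a compact set of AF points whose prefix-replacement levels need not be bounded by the current level, and going deeper produces a new leftover. The paper's proof passes over the same point with the phrase ``refine \ldots\ to one common level $n$''.

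No argument can close this gap, because the common-level claim fails in general. Suppose there is a single singular point $\xi$ and $y=\gamma\xi^{(n)}$ with $\gamma\in\Omega_n(\mathsf{B})$. Then $y$ lies in a level-$n$ piece with $\xi^{(n)}\in D_j$, and both $\Id$ and every local representative of an element of $H_{1,n}$ fix $\xi^{(n)}$. Hence a level-$n$ representation forces $g(y)=\eta_j\xi^{(n)}$. Now take the Grigorchuk group on $\{0,1\}^\omega$: one vertex per level, $\xi=1^\omega$, and $b=(a,c)$, $c=(a,d)$, $d=(1,b)$, where $(x,y)$ acts by $0w\mapsto 0x(w)$, $1w\mapsto 1y(w)$. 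It satisfies the finite singular germ condition with $\Xi=\{1^\omega\}$. Let $f\in\mathsf{F}(\mathfrak{G})$ act as $aca$ on $[0]$ and as $b$ on $[1]$, so $f(0w)=0d(w)$ and $f(1w)=1c(w)$. Let $t\in\mathsf{S}(\mathfrak{T}_{\mathsf{B}})$ interchange the prefixes $00$ and $01$, and put $g=tf$. Then $g(01^\omega)=001^\omega$, so level $n=1$ fails. For $n\geq 2$, $f(1^{n-1}01^\omega)=1^{n-1}001^\omega$ when $n\equiv 1,2\pmod 3$, and $f(01^{n-2}01^\omega)=01^{n-2}001^\omega$ when $n\equiv 0,1\pmod 3$. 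Since $t$ changes only the first two letters, every level fails. What the compactness argument actually proves is the version in which each piece carries its own level, with the singular label at that level. That suffices for Corollary~\ref{cor:AF-normal-form-criterion}, and the proof of Theorem~\ref{thm:AF-parity-full-group-completion} does not use the common level.
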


\begin{proof}
Let $\mathcal{U}$ be a compact open full bisection representing $g$. For $x\in\Omega(\mathsf{B})$, let $\gamma_x\in\mathcal{U}$ be the germ with source $x$. If $\gamma_x$ is AF, then $g$ agrees with an AF prefix replacement on a sufficiently small clopen neighborhood of $x$.

Suppose $\gamma_x$ is not AF. By Proposition~\ref{prop:singular-factorization}, we have
$\gamma_x=\alpha h\beta$ for some $h\in H_i$ and AF arrows $\alpha,\beta$. At a sufficiently deep level $n_x$, the AF arrows are prefix replacements to and from the singular cylinder, while $h$ is transported to $h_{n_x}:=\operatorname{res}_{i,n_x}(h)\in H_{i,n_x}$. After shrinking the source, the action of $g$ has the form $g(\gamma w)=\eta\widetilde{h}_{n_x}(w)$ for a local representative of $h_{n_x}$.

These neighborhoods cover $\Omega(\mathsf{B})$. Compactness gives a finite subcover, which we refine to a disjoint clopen partition and then to one common level $n$. When we refine a singular piece, we transport its germ label by the maps $\rho_{i,k}$. Since $g$ is a homeomorphism, the images of the source pieces form the required range partition.
\end{proof}

\begin{cor}[AF criterion]
\label{cor:AF-normal-form-criterion}
An element $g\in\mathsf{F}(\mathfrak{G})$ belongs to
$\mathsf{S}(\mathfrak{T}_{\mathsf{B}})$ if and only if it admits a
representation as in Proposition~\ref{prop:finite-boundary-normal-form}
in which every local map $\widetilde{h}_j$ is the identity.
\end{cor}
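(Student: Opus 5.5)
The statement to prove is Corollary~\ref{cor:AF-normal-form-criterion}: $g\in\mathsf{F}(\mathfrak{G})$ lies in $\mathsf{S}(\mathfrak{T}_{\mathsf{B}})$ if and only if it has a normal form as in Proposition~\ref{prop:finite-boundary-normal-form} with every $\widetilde{h}_j$ equal to the identity. We prove the two implications separately, working throughout with the germ description of both groups.

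\emph{The representation implies finitary.} Suppose $g$ has such a representation. Then on each piece $[\gamma_j;D_j]$ we have $g(\gamma_jw)=\eta_jw$ with $\mathbf{r}(\eta_j)=\mathbf{r}(\gamma_j)$. Each $D_j$ is clopen in $\Omega^{(n)}_{\mathbf{r}(\gamma_j)}(\mathsf{B})$, so we pass to a common deeper level $m$ at which every $D_j$ is a finite union of cylinders. At that level $g$ becomes a prefix replacement $\gamma_j\mu\mapsto\eta_j\mu$ on each level-$m$ cylinder. Here $\mu$ is a path from level $n+1$ to level $m+1$, and the replacement preserves the terminal vertex and the tail.

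Because $g$ is a bijection and the source and range pieces partition $\Omega(\mathsf{B})$, this prefix replacement is a bijection on each set $V(\mathcal{T}_{v,m})$. It therefore gives an element $\sigma\in\mathsf{S}_m$ with $g=\sigma$, so $g$ is finitary. The one point to check is that the partitions refine consistently to a common level. This is routine, since clopen sets in the Cantor space are finite unions of cylinders.

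\emph{Finitary implies the representation.} Conversely, let $g\in\mathsf{S}(\mathfrak{T}_{\mathsf{B}})$, and take $\sigma\in\mathsf{S}_n$ representing $g$ at some level $n$. Choose the pieces to be the full cylinders: $\gamma_j$ ranges over $\Omega_n(\mathsf{B})$, $D_j=\Omega^{(n)}_{\mathbf{r}(\gamma_j)}(\mathsf{B})$, $\eta_j=\sigma(\gamma_j)$, and $\widetilde{h}_j=\Id$. Conditions (1) through (5) of Proposition~\ref{prop:finite-boundary-normal-form} then hold directly from the definition of $\mathsf{S}_n$.

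\emph{Main obstacle.} The only subtlety is the reading of ``admits a representation'': a given $g$ might also have representations involving nontrivial local maps. This does not affect the argument, since the criterion is existential. It is worth recording, though, that by the uniqueness in Proposition~\ref{prop:singular-factorization} every germ of a finitary $g$ is AF. Hence in \emph{any} representation of such a $g$, the labels $h_j$ must be trivial. The converse direction needs only that the identity labels yield AF germs, which is immediate.
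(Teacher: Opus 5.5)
Your proof is correct and follows the paper's argument: identity local maps make $g$ a finite union of AF prefix replacements, hence a permutation in some $\mathsf{S}_m$, and conversely a level-$n$ representative $\sigma\in\mathsf{S}_n$ gives the normal form on full cylinders; you simply write out the refinement-to-a-common-level and bijectivity checks that the paper leaves implicit. Your closing remark is optional, and its attribution is slightly off: AF-ness of the germs of a finitary $g$ is immediate from the definition, while it is the exclusivity of the two alternatives in Proposition~\ref{prop:singular-factorization} that forces the labels $h_j$ to be trivial.
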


\begin{proof}
If every $\widetilde{h}_j$ is the identity, then $g$ is a finite union
of AF prefix replacements and therefore belongs to $\mathsf{S}(\mathfrak{T}_{\mathsf{B}})$. Conversely, every element of
$\mathsf{S}(\mathfrak{T}_{\mathsf{B}})$ admits a finite prefix description involving only AF prefix replacements.
\end{proof}

\section{Full group completion}
\label{sec:AF-parity-full-group-completion}

Throughout this section, let $\widehat{G}$ be a group of bounded type satisfying the finite singular germ condition with respect to $(\mathsf{B},S,\Xi)$, and put $\mathfrak{G}=\operatorname{Germ}(\widehat{G}\curvearrowright\Omega(\mathsf{B}))$. Assume that $\widehat{G}$ contains the AF alternating group, that is, $\mathsf{A}(\mathfrak{T}_{\mathsf{B}})\leq\widehat{G}$. For each $i$, let $K_i\leq\widehat{G}$ be the subgroup defined in Proposition~\ref{prop:singular-germ-representatives}, applied to the generating set $S$ of $\widehat{G}$. Hence every element of $H_i$ has a representative in $K_i$, and $(k,x)\in\mathfrak{T}_{\mathsf{B}}$ for every $k\in K_i$ and $x\neq\xi_i$.


\subsection{The AF parity quotient and parity completion}
\label{subsec:AF-parity-quotient}

Put $\mathsf{P}_{\mathsf{B}}:=H_0(\mathfrak{T}_{\mathsf{B}};\mathbb{Z}/2\mathbb{Z})$. The following proposition is the AF case of the parity term in Matui's AH exact sequence.  We refer to \cite[Theorems~4.10 and~4.11]{Matui12}, \cite[Conjecture~2.9 and Theorem~3.6]{Matui16}, and \cite[Theorem~6.12 and Corollary~6.14]{Li25}. Here we include a direct proof
to define the map $\varepsilon_{\mathsf{B}}$ used below.

\begin{prop}[The AF parity exact sequence]
\label{prop:AF-parity-exact-sequence}
Let $\mathsf{B}$ be a simple Bratteli diagram with infinite path space and uniformly bounded $|V_n|$. There is an exact sequence
\[1\longrightarrow\mathsf{A}(\mathfrak{T}_{\mathsf{B}})\longrightarrow\mathsf{S}(\mathfrak{T}_{\mathsf{B}})\xrightarrow{\ \varepsilon_{\mathsf{B}}\ }\mathsf{P}_{\mathsf{B}}\longrightarrow1.\addtag\label{af-parity-exact}\]
Thus $\mathsf{S}(\mathfrak{T}_{\mathsf{B}})/\mathsf{A}(\mathfrak{T}_{\mathsf{B}})\cong\mathsf{P}_{\mathsf{B}}$. Moreover, $\dim_{\mathbb{Z}/2\mathbb{Z}}\mathsf{P}_{\mathsf{B}}\leq\sup_n|V_n|$, and hence $\mathsf{P}_{\mathsf{B}}$ is finite.
\end{prop}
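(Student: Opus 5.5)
We build $\varepsilon_{\mathsf{B}}$ level by level from sign homomorphisms and pass to the direct limit. For each $n$, let
\[\varepsilon_n:\mathsf{S}_n\to(\mathbb{Z}/2\mathbb{Z})^{V_{n+1}},\qquad \sigma=(\sigma_v)_v\mapsto(\operatorname{sgn}\sigma_v)_v.\]
This is a surjective homomorphism with kernel $\mathsf{A}_n$. Surjectivity needs $|\mathcal{T}_{v,n}|\geq2$ for every $v$; vertices whose tiles are singletons contribute zero in the mod-$2$ quotient and are handled below.

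The key compatibility is $\varepsilon_{n+1}\circ\iota_{n,n+1}=\overline{M}_{n+1}\circ\varepsilon_n$, with the indexing of $M$ matching the paper's convention for the edges between $V_{n+1}$ and $V_{n+2}$. To see this, note that for $w\in V_{n+2}$ the restriction of $\iota_{n,n+1}(\sigma)$ to $\mathcal{T}_{w,n+1}$ is a product of disjoint copies of $\sigma_{\mathbf{s}(e)}$, one for each edge $e$ with $\mathbf{r}(e)=w$. Its sign is therefore $\prod_{e}\operatorname{sgn}\sigma_{\mathbf{s}(e)}$, which in additive notation is $\sum_v m_{w,v}\,\varepsilon_n(\sigma)_v \bmod 2$. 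So the sign maps form a morphism of directed systems
\[(\mathsf{S}_n,\iota)\to\bigl((\mathbb{Z}/2\mathbb{Z})^{V_{n+1}},\overline{M}\bigr),\]
and they induce $\varepsilon_{\mathsf{B}}:\mathsf{S}(\mathfrak{T}_{\mathsf{B}})\to\varinjlim\bigl((\mathbb{Z}/2\mathbb{Z})^{V_n},\overline{M}_n\bigr)\cong\mathsf{P}_{\mathsf{B}}$, using Proposition~\ref{prop:tail-groupoid-dimension-group}. The map is well defined because each element of $\mathsf{S}(\mathfrak{T}_{\mathsf{B}})$ has a representative at some level, and representatives at different levels are related by the $\iota$'s.

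Exactness follows because exactness of directed systems of groups is preserved by direct limits, which is an elementary element chase.
\begin{enumerate}
\item \emph{Surjectivity.} An element of $\mathsf{P}_{\mathsf{B}}$ is represented by some vector $p\in(\mathbb{Z}/2\mathbb{Z})^{V_{n+1}}$, and we lift it at a level where all tiles have at least two vertices. Such a level exists because $\mathsf{B}$ is simple with $\Omega(\mathsf{B})$ infinite, so $\min_v h_n(v)\to\infty$; otherwise some vertex sequence would carry only boundedly many paths, contradicting minimality on a Cantor space. We may also simply push $p$ forward to such a deeper level.
\item \emph{Kernel.} If $\varepsilon_{\mathsf{B}}(g)=0$ with $g$ represented by $\sigma\in\mathsf{S}_n$, then $\overline{M}_{m-1}\cdots\overline{M}_{n+1}\varepsilon_n(\sigma)=0$ for some $m$. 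Hence $\iota_{n,m}(\sigma)\in\ker\varepsilon_m=\mathsf{A}_m$, so $g\in\mathsf{A}(\mathfrak{T}_{\mathsf{B}})$.
\item \emph{Containment.} The inclusion $\mathsf{A}(\mathfrak{T}_{\mathsf{B}})\subseteq\ker\varepsilon_{\mathsf{B}}$ is clear.
\end{enumerate}
This gives the exact sequence \eqref{af-parity-exact} and the isomorphism $\mathsf{S}(\mathfrak{T}_{\mathsf{B}})/\mathsf{A}(\mathfrak{T}_{\mathsf{B}})\cong\mathsf{P}_{\mathsf{B}}$.

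For the dimension bound, a direct limit of $\mathbb{Z}/2\mathbb{Z}$-vector spaces of dimensions $|V_n|\leq N:=\sup_n|V_n|$ has dimension at most $N$. Indeed, if $N+1$ elements were linearly independent in the limit, they would all be represented at a common level $n$ by vectors in a space of dimension at most $N$, so they would be dependent there. Since the limit maps are linear, that dependence persists in the limit, a contradiction. Thus $\mathsf{P}_{\mathsf{B}}$ is finite.

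The main technical point is the sign-compatibility computation together with getting the indexing of $\overline{M}$ right. The only other subtlety is making sure all tiles have size at least $2$ at deep levels, so that $\varepsilon_n$ is onto. If we prefer to avoid that argument, surjectivity onto the limit still holds: a coordinate at a vertex with a singleton tile eventually either dies or is pushed to deeper levels where the tiles are large.
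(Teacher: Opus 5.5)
Your proposal is correct and follows the paper's proof essentially step for step: levelwise sign maps $\varepsilon_n$, the compatibility $\varepsilon_{n+1}\circ\iota_{n,n+1}=\overline{M}_{n+1}\circ\varepsilon_n$, an element chase in the direct limit for the kernel, lifting a class by transpositions at a level where all tiles are large, and the common-level argument for $\dim\mathsf{P}_{\mathsf{B}}\leq\sup_n|V_n|$. Two small points. First, the growth of tiles is best justified directly from simplicity, as the paper does: fix finitely many paths with distinct prefixes at some level, and at a deeper level every vertex is reached from all of their endpoints. Appealing to minimality is less direct. Second, in the kernel step the composite should read $\overline{M}_m\cdots\overline{M}_{n+1}$ to match $\varepsilon_m\circ\iota_{n,m}$.
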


\begin{proof}
We first pass to a level of $\mathsf{B}$ after which every tile has at least three vertices. Choose three distinct infinite paths and a level at which their prefixes are distinct. By simplicity of $\mathsf{B}$, after passing to a deeper level, each of the terminal vertices of these three prefixes is connected to every vertex at the new level. Hence every tile at that level contains continuations of all three prefixes and therefore has at least three vertices. The same holds at every deeper level.

For a finite permutation $\sigma$, let $\operatorname{sgn}(\sigma)\in\mathbb{Z}/2\mathbb{Z}$ be its parity, which is equal to $0$ when $\sigma$ is an even permutation and to $1$ when $\sigma$ is an odd permutation. At level $n$, define
\[\varepsilon_n:\mathsf{S}_n\to(\mathbb{Z}/2\mathbb{Z})^{V_{n+1}}, \qquad \varepsilon_n((\sigma_v)_v)=(\operatorname{sgn}(\sigma_v))_v.\]
This map is surjective and has kernel $\mathsf{A}_n$.

Let $w\in V_{n+2}$. The tile $\mathcal{T}_{w,n+1}$ is the disjoint union of the copies $\mathcal{T}_{\mathbf{s}(e),n}e$ over the edges $e\in E_{n+1}$ with $\mathbf{r}(e)=w$. Therefore
\[
\begin{aligned}
        \operatorname{sgn}\left(\left.\iota_{n,n+1}(\sigma)\right|_{\mathcal{T}_{w,n+1}}\right)
        &=
        \sum_{\substack{e\in E_{n+1}\\ \mathbf{r}(e)=w}}\operatorname{sgn}(\sigma_{\mathbf{s}(e)})\\
        &=
        \sum_{v\in V_{n+1}}m_{w,v}^{(n+1)}\operatorname{sgn}(\sigma_v)\pmod 2.
\end{aligned}
\]
Hence
\[\varepsilon_{n+1}\circ\iota_{n,n+1}=\overline{M}_{n+1}\circ\varepsilon_n.\addtag\label{comptabpar}\]

Recall that
\[\mathsf{P}_{\mathsf{B}}:=H_0(\mathfrak{T}_{\mathsf{B}};\mathbb{Z}/2\mathbb{Z})\cong\varinjlim\left((\mathbb{Z}/2\mathbb{Z})^{V_n},\overline{M}_n\right).\]
For $x\in(\mathbb{Z}/2\mathbb{Z})^{V_n}$, we denote by $[x,n]\in\mathsf{P}_{\mathsf{B}}$ the class in the direct limit represented by $x$ at level $n$. Thus
\[[x,n]=[\overline{M}_n x,n+1].\]
The compatibility (\ref{comptabpar}) therefore allows us to define
\[
\varepsilon_{\mathsf{B}}(g):=
[\varepsilon_n(g_n),n+1],\addtag\label{parity-map}\]
where $g_n\in\mathsf{S}_n$ represents $g\in\mathsf{S}(\mathfrak{T}_{\mathsf{B}})$. Indeed, if $g_n$ is replaced by its image $\iota_{n,n+1}(g_n)$ at the next level, then
\[
\begin{aligned}
[\varepsilon_{n+1}(\iota_{n,n+1}(g_n)),n+2]
&=
[\overline{M}_{n+1}\varepsilon_n(g_n),n+2]\\
&=
[\varepsilon_n(g_n),n+1].
\end{aligned}
\]
Thus $\varepsilon_{\mathsf{B}}$ is independent of the finite-level representative.

If $g\in\mathsf{A}(\mathfrak{T}_{\mathsf{B}})$, then
$\varepsilon_{\mathsf{B}}(g)=0$. Conversely, if
$\varepsilon_{\mathsf{B}}(g)=0$, then at some deeper level the parity vector of $g$ is zero. Hence the corresponding finite-level representative belongs to $\mathsf{A}_m$, and thus $g\in\mathsf{A}(\mathfrak{T}_{\mathsf{B}})$.
Therefore, $\ker(\varepsilon_{\mathsf{B}})=
\mathsf{A}(\mathfrak{T}_{\mathsf{B}})$.

We next prove surjectivity. Let $p\in\mathsf{P}_{\mathsf{B}}$. Passing to a sufficiently deep level if necessary, we may write
$p=[(x_v)_{v\in V_{n+1}},n+1]$
for some $(x_v)_{v\in V_{n+1}}\in(\mathbb{Z}/2\mathbb{Z})^{V_{n+1}}$,
where every level-$n$ tile has at least three vertices. For every $v\in V_{n+1}$, choose $\sigma_v\in\mathsf{S}(\mathcal{T}_{v,n})$ to be the identity if $x_v=0$ and a transposition if $x_v=1$. Then $\sigma=(\sigma_v)_{v\in V_{n+1}}\in\mathsf{S}_n$ satisfies $\varepsilon_n(\sigma)=(x_v)_{v\in V_{n+1}}$.

Therefore,
\[\varepsilon_{\mathsf{B}}(\sigma)=[(x_v)_{v\in V_{n+1}},n+1]=p.\]
Hence $\varepsilon_{\mathsf{B}}$ is surjective.

Finally, let $p_1,\ldots,p_k\in\mathsf{P}_{\mathsf{B}}$ be linearly independent. Since every finite family of elements of a direct limit can be represented at one common level, after passing to a sufficiently deep level all $p_j$ belong to the image of $(\mathbb{Z}/2\mathbb{Z})^{V_n}
\longrightarrow\mathsf{P}_{\mathsf{B}}$. The image of this map has dimension at most $|V_n|\leq\sup_m|V_m|$. Since $p_1,\ldots,p_k$ are linearly independent, it follows that $k\leq\sup_m|V_m|$. Therefore, $\dim_{\mathbb{Z}/2\mathbb{Z}}\mathsf{P}_{\mathsf{B}}\leq\sup_m|V_m|$.

\end{proof}

Put $\mathsf{P}_{\widehat{G}}:=\varepsilon_{\mathsf{B}}(\widehat{G}\cap\mathsf{S}(\mathfrak{T}_{\mathsf{B}}))$. Choose $p_1,\ldots,p_d\in\mathsf{P}_{\mathsf{B}}$ whose images form a basis of $\mathsf{P}_{\mathsf{B}}/\mathsf{P}_{\widehat{G}}$, and choose $\tau_1,\ldots,\tau_d\in\mathsf{S}(\mathfrak{T}_{\mathsf{B}})$ with $\varepsilon_{\mathsf{B}}(\tau_j)=p_j$.

\begin{defn}
\label{def:parity-completion}
We define $\widehat{G}^{\mathrm{par}}:=\langle\widehat{G},\tau_1,\ldots,\tau_d\rangle$ and call it the \emph{parity completion} of $\widehat{G}$. 
\end{defn}

\begin{cor}
\label{cor:AF-symmetric-parity-completion}
We have $\mathsf{S}(\mathfrak{T}_{\mathsf{B}})
\leq\widehat{G}^{\mathrm{par}}$.
\end{cor}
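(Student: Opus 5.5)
The plan is to reduce the containment to a statement about parity classes, using the exact sequence of Proposition~\ref{prop:AF-parity-exact-sequence}. Put $L:=\widehat{G}^{\mathrm{par}}\cap\mathsf{S}(\mathfrak{T}_{\mathsf{B}})$. This is a subgroup of $\mathsf{S}(\mathfrak{T}_{\mathsf{B}})$. Moreover, $\mathsf{A}(\mathfrak{T}_{\mathsf{B}})\leq\widehat{G}\leq\widehat{G}^{\mathrm{par}}$, so $\mathsf{A}(\mathfrak{T}_{\mathsf{B}})\leq L$. Since $\mathsf{A}(\mathfrak{T}_{\mathsf{B}})=\ker\varepsilon_{\mathsf{B}}$, the subgroup $L$ is the full preimage $\varepsilon_{\mathsf{B}}^{-1}(\varepsilon_{\mathsf{B}}(L))$. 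It therefore suffices to show $\varepsilon_{\mathsf{B}}(L)=\mathsf{P}_{\mathsf{B}}$. Then every $g\in\mathsf{S}(\mathfrak{T}_{\mathsf{B}})$ has the same parity as some $\ell\in L$, so $\ell^{-1}g\in\mathsf{A}(\mathfrak{T}_{\mathsf{B}})\leq L$, which gives $g\in L$.

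To compute $\varepsilon_{\mathsf{B}}(L)$, I would observe that it is a subgroup of the elementary abelian $2$-group $\mathsf{P}_{\mathsf{B}}$. It contains $\mathsf{P}_{\widehat{G}}=\varepsilon_{\mathsf{B}}(\widehat{G}\cap\mathsf{S}(\mathfrak{T}_{\mathsf{B}}))$, because $\widehat{G}\cap\mathsf{S}(\mathfrak{T}_{\mathsf{B}})\leq L$. It also contains $p_j=\varepsilon_{\mathsf{B}}(\tau_j)$ for each $j$, because $\tau_j\in\widehat{G}^{\mathrm{par}}\cap\mathsf{S}(\mathfrak{T}_{\mathsf{B}})$ by construction.

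By the choice of $p_1,\ldots,p_d$, their images span $\mathsf{P}_{\mathsf{B}}/\mathsf{P}_{\widehat{G}}$. Hence every $p\in\mathsf{P}_{\mathsf{B}}$ can be written as $p=q+\sum_j c_jp_j$ with $q\in\mathsf{P}_{\widehat{G}}$ and $c_j\in\mathbb{Z}/2\mathbb{Z}$. This shows $\mathsf{P}_{\widehat{G}}+\langle p_1,\ldots,p_d\rangle=\mathsf{P}_{\mathsf{B}}$, and therefore $\varepsilon_{\mathsf{B}}(L)=\mathsf{P}_{\mathsf{B}}$.

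The argument is essentially formal, and the only point needing care is the first step. There one must use that $\varepsilon_{\mathsf{B}}$ is a homomorphism defined on all of $\mathsf{S}(\mathfrak{T}_{\mathsf{B}})$ whose kernel is exactly $\mathsf{A}(\mathfrak{T}_{\mathsf{B}})$. This is precisely what Proposition~\ref{prop:AF-parity-exact-sequence} provides, together with the standing hypothesis $\mathsf{A}(\mathfrak{T}_{\mathsf{B}})\leq\widehat{G}$. No analysis of singular germs is needed for this corollary.
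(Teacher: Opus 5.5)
Your proposal is correct and follows the paper's own argument in substance. The paper works elementwise: it writes $q=(q\tau^{-1}b^{-1})\,b\,\tau$ with $\tau\in\langle\tau_1,\ldots,\tau_d\rangle$, $b\in\widehat{G}\cap\mathsf{S}(\mathfrak{T}_{\mathsf{B}})$ and $q\tau^{-1}b^{-1}\in\mathsf{A}(\mathfrak{T}_{\mathsf{B}})$. You package the same steps as the statement that $L$ contains $\ker\varepsilon_{\mathsf{B}}$ and has full parity image, relying likewise on the exact sequence and on $\mathsf{P}_{\widehat{G}}+\langle p_1,\ldots,p_d\rangle=\mathsf{P}_{\mathsf{B}}$.
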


\begin{proof}
Let $q\in\mathsf{S}(\mathfrak{T}_{\mathsf{B}})$. Choose $\tau\in\langle\tau_1,\ldots,\tau_d\rangle$ such that $\varepsilon_{\mathsf{B}}(q\tau^{-1})\in\mathsf{P}_{\widehat{G}}$. Choose $b\in\widehat{G}\cap\mathsf{S}(\mathfrak{T}_{\mathsf{B}})$ with the same parity. Then $q\tau^{-1}b^{-1}\in\mathsf{A}(\mathfrak{T}_{\mathsf{B}})\leq\widehat{G}$, and therefore $q\in\widehat{G}^{\mathrm{par}}$.
\end{proof}

\begin{rmk}
We may choose the elements $\tau_j$ as finite products of transpositions in tiles on deep levels. Since their germs belong to the AF core, adjoining them does not change $\mathfrak{G}$.
\end{rmk}

\subsection{Localization of singular germs}
\label{subsec:localization-singular-germs}
Let $h\in H_{i,n}$ and let $m\geq n$. We write $h_m=\rho_{i,m-1}\cdots\rho_{i,n}(h)
\in H_{i,m}$, with $h_n=h$, where the maps $\rho_{i,k}$ are the one-step restriction isomorphisms from
Corollary~\ref{cor:one-step-singular-restriction}. A path
$\gamma\in\Omega_m(\mathsf{B})$ is said to be \emph{compatible} with
$h_m$ if $\mathbf{r}(\gamma)=\mathbf{r}(\xi_{i,m})$. In this case, the prefix charts
$\kappa_{\xi_{i,m}},\kappa_\gamma:
\Omega_{\mathbf{r}(\gamma)}^{(m)}(\mathsf{B})\longrightarrow\Omega(\mathsf{B})$ have the same domain.

\begin{defn}[Localization]
\label{def:singular-localization}
Let $\gamma\in\Omega_m(\mathsf{B})$ be compatible with $h_m$. Suppose $\widetilde h_m$ is a homeomorphism of $\Omega_{\mathbf{r}(\gamma)}^{(m)}(\mathsf{B})$ whose germ at $\xi_i^{(m)}$ is $h_m$. A \emph{localization of $h_m$ to $[\gamma]$}, with respect to
$\widetilde h_m$, is a homeomorphism $h_\gamma$ of $\Omega(\mathsf{B})$ satisfying $h_\gamma(\gamma w)=\gamma \widetilde h_m(w)$ for every $w\in\Omega_{\mathbf{r}(\gamma )}^{(m)}(\mathsf{B})$, and $h_\gamma (x)=x$ for every $x\notin[\gamma ]$.
\end{defn}

Thus a localization has support contained in $[\gamma ]$ and reproduces the prescribed shifted singular behavior inside that cylinder.

\begin{prop}[Localization of singular germs]
\label{prop:singular-localization-parity}
Let $h\in H_{i,n}$. Suppose that, after passing to some level
$m\geq n$, there exists $\widehat{h}\in K_i$ such that:

\begin{enumerate}
\item the cylinder $C=[\xi_{i,m}]$ is $\widehat{h}$-invariant;

\item the homeomorphism $\widetilde{h}_m=\kappa_{\xi_{i,m}}^{-1}\widehat{h}\kappa_{\xi_{i,m}}$ of
$\Omega_{\mathbf{r}(\xi_{i,m})}^{(m)}(\mathsf{B})$ satisfies
$(\widetilde{h}_m,\xi_i^{(m)})=h_m$.
\end{enumerate}

Assume also that the finite tile containing $\xi_{i,m}$ has at least
three vertices. Then every level-$m$ cylinder $[\gamma]$ compatible
with $h_m$ admits a localization $h_\gamma\in\widehat{G}^{\mathrm{par}}$.

Moreover, write $x_\gamma=\gamma\xi_i^{(m)}$. Then
$(h_\gamma,x)\in\mathfrak{T}_{\mathsf{B}}$ for every $x\neq x_\gamma$.
\end{prop}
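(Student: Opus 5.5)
The plan is to first build the localization on the singular cylinder $C=[\xi_{i,m}]$ itself, and then move it to an arbitrary compatible cylinder $[\gamma]$ by conjugating with a finitary transformation.

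\textbf{Localization on $C$.} Since $C$ is $\widehat{h}$-invariant, $\widehat h$ restricts to a homeomorphism of $C$, so $\widetilde h_m=\kappa_{\xi_{i,m}}^{-1}\widehat h\kappa_{\xi_{i,m}}$ is a homeomorphism of $\Omega^{(m)}_{\mathbf{r}(\xi_{i,m})}(\mathsf{B})$ with germ $h_m$, by hypothesis. The element $\widehat h$ need not be supported in $C$. By Proposition~\ref{prop:singular-germ-representatives}, however, $(\widehat h,x)\in\mathfrak{T}_{\mathsf{B}}$ for every $x\neq\xi_i$. In particular $\widehat h$ is AF on the compact set $\Omega(\mathsf{B})\setminus C$, which is also invariant. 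Hence $\widehat h|_{\Omega(\mathsf{B})\setminus C}$ is a finite union of AF prefix replacements. Extending it by the identity on $C$ gives an element $q\in\mathsf{S}(\mathfrak{T}_{\mathsf{B}})$; here I use that a homeomorphism all of whose germs are AF lies in $\mathsf{S}(\mathfrak{T}_{\mathsf{B}})$, as in Corollary~\ref{cor:AF-normal-form-criterion}. By Corollary~\ref{cor:AF-symmetric-parity-completion}, $q\in\widehat G^{\mathrm{par}}$. Put $h_{\xi_{i,m}}:=q^{-1}\widehat h\in\widehat G^{\mathrm{par}}$. It is the identity off $C$ and equals $\widehat h$ on $C$, so it is a localization of $h_m$ to $C$.

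\textbf{Transport to $[\gamma]$.} Let $\gamma\in\Omega_m(\mathsf{B})$ be compatible, so $\gamma$ and $\xi_{i,m}$ are vertices of the same level-$m$ tile $\mathcal{T}_{v,m}$. If $\gamma=\xi_{i,m}$ we are done. Otherwise take the transposition $\sigma=(\gamma,\xi_{i,m})\in\mathsf{S}(\mathcal{T}_{v,m})\leq\mathsf{S}_m$, which lies in $\widehat G^{\mathrm{par}}$ by Corollary~\ref{cor:AF-symmetric-parity-completion}. If one prefers to stay inside $\widehat G$ up to $\mathsf{A}(\mathfrak{T}_{\mathsf{B}})$, use the three-vertex hypothesis to replace $\sigma$ by a $3$-cycle sending $\xi_{i,m}\mapsto\gamma$. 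Either way $\sigma$ is an AF prefix replacement with $\sigma(\xi_{i,m}w)=\gamma w$. Define $h_\gamma:=\sigma h_{\xi_{i,m}}\sigma^{-1}$. Then
\[
h_\gamma(\gamma w)=\sigma h_{\xi_{i,m}}(\xi_{i,m}w)=\sigma(\xi_{i,m}\widetilde h_m(w))=\gamma\widetilde h_m(w).
\]
Moreover $h_\gamma$ is supported in $\sigma(C)=[\gamma]$. So $h_\gamma$ is a localization in $\widehat G^{\mathrm{par}}$.

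\textbf{Germ claim.} For $x\neq x_\gamma$, the germ $(h_\gamma,x)$ is a product of the AF germs of $\sigma^{\pm1}$ and the germ of $h_{\xi_{i,m}}$ at $\sigma^{-1}(x)\neq\xi_{i,m}\xi_i^{(m)}=\xi_i$. The germ of $h_{\xi_{i,m}}$ at such a point is either trivial (off $C$) or the germ of $\widehat h$ away from $\xi_i$. The latter is AF by Proposition~\ref{prop:singular-germ-representatives}. Hence $(h_\gamma,x)\in\mathfrak{T}_{\mathsf{B}}$.

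\textbf{Main obstacle.} The delicate step is showing that $q$, i.e.\ $\widehat h$ restricted to the complement of $C$ and extended by the identity, is genuinely finitary. One needs that a homeomorphism with AF germs everywhere on a clopen invariant set is a finite prefix permutation at a common level. This follows by compactness: cover the set by basic bisections $Z(\alpha,\beta)$ and refine them to a common level, using that $\mathfrak{T}_{\mathsf{B}}$ is open in $\mathfrak{G}$.
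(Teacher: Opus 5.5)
Your proposal is correct and follows the paper's proof essentially step for step: your $q$ is the paper's AF truncation $a$, your $h_{\xi_{i,m}}=q^{-1}\widehat h$ is the paper's $h_C$, and you argue the germ claim the same way. The only variation is that your main route transports by a transposition in $\mathsf{S}(\mathfrak{T}_{\mathsf{B}})\leq\widehat G^{\mathrm{par}}$, which makes the three-vertex hypothesis unnecessary for this proposition; the paper instead uses the $3$-cycle $(\xi_{i,m}\,\gamma\,z)\in\mathsf{A}(\mathfrak{T}_{\mathsf{B}})\leq\widehat G$ (your stated alternative), because Corollary~\ref{cor:localization-in-original-group} later needs the conjugator in $\widehat G$ to keep $h_\gamma$ inside $\widehat G$.
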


\begin{proof}
Since $C$ is $\widehat{h}$-invariant, its complement is also
$\widehat{h}$-invariant. Following the idea of \cite[Subsection~8.3]{nekrash18}, define
\begin{equation}
a(x)
=
\begin{cases}
x, & x\in C,\\
\widehat{h}(x), & x\notin C.
\end{cases}
\label{aftruncation}
\end{equation}
This is a homeomorphism of $\Omega(\mathsf{B})$.

We first show that $a\in\mathsf{S}(\mathfrak{T}_{\mathsf{B}})$. On $C$, the map $a$ is the identity. If $x\notin C$, then $x\neq\xi_i$, and Proposition~\ref{prop:singular-germ-representatives} gives
$(\widehat{h},x)\in\mathfrak{T}_{\mathsf{B}}$. Thus every germ of $a$ is AF. Compactness gives a finite clopen
partition on which $a$ is given by AF prefix replacements. Hence $a\in\mathsf{S}(\mathfrak{T}_{\mathsf{B}})$.

By Corollary~\ref{cor:AF-symmetric-parity-completion}, $a\in\widehat{G}^{\mathrm{par}}$.
Put $h_C=a^{-1}\widehat{h}$.
Then $h_C\in\widehat{G}^{\mathrm{par}}$, $h_C$ agrees with
$\widehat{h}$ on $C$, and $h_C$ is the identity outside $C$. 

For $w\in\Omega_{\mathbf{r}(\xi_{i,m})}^{(m)}(\mathsf{B})$,
we have
\[h_C(\xi_{i,m}w)=\xi_{i,m}\widetilde{h}_m(w).\] 
Thus $h_C$ is a localization of $h_m$ to $C$.

Let $\gamma\in\Omega_m(\mathsf{B})$, with $\mathbf{r}(\gamma)=\mathbf{r}(\xi_{i,m})$. If $\gamma=\xi_{i,m}$, then take $\alpha=\Id$. Otherwise, choose a third path $z\in\Omega_m(\mathsf{B})$ ending at the same vertex and distinct from both $\gamma$ and $\xi_{i,m}$. The prefix $3$-cycle $\alpha=(\xi_{i,m}\,\gamma\,z)$ belongs to
$\mathsf{A}(\mathfrak{T}_{\mathsf{B}})
\leq\widehat{G}$ and sends $C$ onto $[\gamma]$. Define $h_\gamma=\alpha h_C\alpha^{-1}$. Then $h_\gamma\in\widehat{G}^{\mathrm{par}}$, and for every compatible tail $w$, $h_\gamma(\gamma w)=
\gamma\widetilde{h}_m(w)$. Thus $h_\gamma$ is the required localization.

It remains to prove the last assertion. Every germ of $h_C$ away from $\xi_i$ is AF: outside $C$ the map is the identity, while inside $C$
this follows from Proposition~\ref{prop:singular-germ-representatives}. Since $\alpha$
is AF, conjugation by $\alpha$ carries the unique possible non-AF
source $\xi_i$ to $x_\gamma=\gamma\xi_i^{(m)}$. Therefore, every germ of $h_\gamma$ away from $x_\gamma$ belongs to
$\mathfrak{T}_{\mathsf{B}}$.
\end{proof}

\begin{defn}[AF truncation]
\label{def:af-truncation}
Let $h\in H_{i,n}$, and let $m$ and $\widehat{h}\in K_i$ satisfy the
hypotheses of Proposition~\ref{prop:singular-localization-parity}. The transformation $a$ defined by~\eqref{aftruncation} is called the \emph{AF truncation} of $\widehat{h}$ relative to the cylinder $[\xi_{i,m}]$.
\end{defn}

\begin{cor}
\label{cor:localization-in-original-group}
If the AF truncation $a$ satisfies
$\varepsilon_{\mathsf{B}}(a)\in\mathsf{P}_{\widehat{G}}$, then every localization supplied by
Proposition~\ref{prop:singular-localization-parity} belongs to
$\widehat{G}$.
\end{cor}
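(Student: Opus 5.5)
The plan is to rerun the construction in the proof of Proposition~\ref{prop:singular-localization-parity}, but to show that every ingredient already lies in $\widehat{G}$ rather than only in $\widehat{G}^{\mathrm{par}}$. Recall that $h_C=a^{-1}\widehat{h}$ and $h_\gamma=\alpha h_C\alpha^{-1}$, where $\widehat{h}\in K_i\leq\widehat{G}$ and $\alpha$ is either the identity or a prefix $3$-cycle lying in $\mathsf{A}(\mathfrak{T}_{\mathsf{B}})\leq\widehat{G}$. Hence it is enough to prove that the AF truncation $a$ lies in $\widehat{G}$. The only point in that proof where $\widehat{G}^{\mathrm{par}}$ was needed was the use of Corollary~\ref{cor:AF-symmetric-parity-completion} for $a$.

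\textbf{Step 1.} From the proof of Proposition~\ref{prop:singular-localization-parity} we already know that $a\in\mathsf{S}(\mathfrak{T}_{\mathsf{B}})$. By hypothesis $\varepsilon_{\mathsf{B}}(a)\in\mathsf{P}_{\widehat{G}}=\varepsilon_{\mathsf{B}}\bigl(\widehat{G}\cap\mathsf{S}(\mathfrak{T}_{\mathsf{B}})\bigr)$, so we can pick $b\in\widehat{G}\cap\mathsf{S}(\mathfrak{T}_{\mathsf{B}})$ with $\varepsilon_{\mathsf{B}}(b)=\varepsilon_{\mathsf{B}}(a)$.

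\textbf{Step 2.} Since $\varepsilon_{\mathsf{B}}$ is a homomorphism on $\mathsf{S}(\mathfrak{T}_{\mathsf{B}})$ with values in an abelian group of exponent $2$, we get $\varepsilon_{\mathsf{B}}(ab^{-1})=0$. The exact sequence~\eqref{af-parity-exact} in Proposition~\ref{prop:AF-parity-exact-sequence} then gives $ab^{-1}\in\mathsf{A}(\mathfrak{T}_{\mathsf{B}})\leq\widehat{G}$, and therefore $a=(ab^{-1})b\in\widehat{G}$.

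\textbf{Step 3.} Consequently $h_C=a^{-1}\widehat{h}\in\widehat{G}$, and so $h_\gamma=\alpha h_C\alpha^{-1}\in\widehat{G}$ for every compatible $\gamma$. This covers every localization produced by Proposition~\ref{prop:singular-localization-parity}.

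There is essentially no obstacle in this argument. The one point to check is that the corollary refers to the specific localizations built from the same $\widehat{h}$ and $m$ whose truncation $a$ satisfies the parity hypothesis; the choice of $\alpha$ does not matter, since $\alpha$ always lies in $\mathsf{A}(\mathfrak{T}_{\mathsf{B}})$.
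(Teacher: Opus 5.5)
Your proposal is correct and follows the paper's own proof essentially verbatim. You choose $b\in\widehat{G}\cap\mathsf{S}(\mathfrak{T}_{\mathsf{B}})$ with the same parity as $a$, use $\ker(\varepsilon_{\mathsf{B}})=\mathsf{A}(\mathfrak{T}_{\mathsf{B}})\leq\widehat{G}$ to conclude $a\in\widehat{G}$, and then deduce $h_C=a^{-1}\widehat{h}\in\widehat{G}$ and $h_\gamma=\alpha h_C\alpha^{-1}\in\widehat{G}$.
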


\begin{proof}
By the definition of $\mathsf{P}_{\widehat{G}}$, there exists $b\in\widehat{G}\cap\mathsf{S}(\mathfrak{T}_{\mathsf{B}})$ such that $\varepsilon_{\mathsf{B}}(b)=
\varepsilon_{\mathsf{B}}(a)$.

Hence
\[ab^{-1}\in\ker(\varepsilon_{\mathsf{B}})=
\mathsf{A}(\mathfrak{T}_{\mathsf{B}})
\leq\widehat{G}.\]
Since $b\in\widehat{G}$, it follows that $a\in\widehat{G}$. Therefore,
$h_C=a^{-1}\widehat{h}\in\widehat{G}$. Since the prefix $3$-cycle $\alpha$ also belongs to $\widehat{G}$, we obtain $h_\gamma=\alpha h_C\alpha^{-1}\in\widehat{G}$.
\end{proof}

\subsection{Full group completion}\label{sec:full-group-comp}

We are ready to prove the main theorem of this paper. We say that the singular germs satisfy the \emph{localization condition}
if, for every $i$, every $n\geq1$, and every $h\in H_{i,n}$, there
exist $m\geq n$ and $\widehat{h}\in K_i$ satisfying the hypotheses of
Proposition~\ref{prop:singular-localization-parity} for the canonical
image $h_m\in H_{i,m}$.

We say that the AF truncations satisfy the \emph{parity condition} if the choices of $m$ and $\widehat{h}$ in the localization condition can always be made so that the associated AF truncation $a$ satisfies $\varepsilon_{\mathsf{B}}(a)\in\mathsf{P}_{\widehat{G}}$.

The localization condition is formulated in terms of shifted groups of germs. Since each $H_i$ is finite, the following proposition reduces it to choosing invariant representatives of the original singular germs. Thus localization can be verified directly from the tile inflation process.

\begin{prop}[Invariant representatives of singular germs]
\label{prop:invariant-singular-representatives}
Suppose that, for every $i$ and every $h\in H_i$, there exists $\widehat{h}\in K_i$ such that $(\widehat{h},\xi_i)=h$ and $\widehat{h}([\xi_{i,m}])=[\xi_{i,m}]$ for arbitrarily large $m$. Then the singular germs satisfy the localization condition.
\end{prop}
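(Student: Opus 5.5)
We must verify the localization condition: for every $i$, every $n\geq1$ and every $h\in H_{i,n}$, we must find $m\geq n$ and $\widehat h\in K_i$ satisfying the two hypotheses of Proposition~\ref{prop:singular-localization-parity}, together with the standing assumption that the tile containing $\xi_{i,m}$ has at least three vertices. The plan is to pull $h$ back to an original germ in $H_i$ and use the hypothesized invariant representative.

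First, by Proposition~\ref{prop:canonical-singular-restriction}, $\operatorname{res}_{i,n}$ is an isomorphism. So we set $g:=\operatorname{res}_{i,n}^{-1}(h)\in H_i$. By hypothesis there is $\widehat h\in K_i$ with $(\widehat h,\xi_i)=g$ and $\widehat h([\xi_{i,m}])=[\xi_{i,m}]$ for arbitrarily large $m$. The same $\widehat h$ also represents $g$ at every level.

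Next we fix the level $m$. As in the first paragraph of the proof of Proposition~\ref{prop:AF-parity-exact-sequence}, simplicity of $\mathsf{B}$ and infiniteness of $\Omega(\mathsf{B})$ give a level $N$ beyond which every finite tile has at least three vertices. We then choose $m\geq\max(n,N)$ among the arbitrarily large levels at which $[\xi_{i,m}]$ is $\widehat h$-invariant. This gives hypothesis (1) of Proposition~\ref{prop:singular-localization-parity} and the three-vertex assumption.

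For hypothesis (2), invariance of $C=[\xi_{i,m}]$ means $\widetilde h_m=\kappa_{\xi_{i,m}}^{-1}\widehat h\kappa_{\xi_{i,m}}$ is a globally defined homeomorphism of $\Omega^{(m)}_{\mathbf{r}(\xi_{i,m})}(\mathsf{B})$. On a neighborhood of $\xi_i^{(m)}$ it coincides with $\operatorname{sec}_{\xi_i,m}(\widehat h)$, because $\widehat h(\xi_i)=\xi_i$ and hence $\widehat h(\xi_i)_m=\xi_{i,m}$. Therefore
\[(\widetilde h_m,\xi_i^{(m)})=\operatorname{res}_{i,m}(g)=\operatorname{res}_{i,m}\operatorname{res}_{i,n}^{-1}(h)=\rho_{i,m-1}\cdots\rho_{i,n}(h)=h_m.\]
Here the third equality is the telescoping of the definition in Corollary~\ref{cor:one-step-singular-restriction}.

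The only delicate point is this identification of $h_m$ with $\operatorname{res}_{i,m}$ of the original germ, i.e.\ that the transported label does not depend on the intermediate levels. It is immediate from $\rho_{i,k}=\operatorname{res}_{i,k+1}\circ\operatorname{res}_{i,k}^{-1}$. Everything else is bookkeeping, so the proposition follows.
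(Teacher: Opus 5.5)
Your proof is correct and follows essentially the same route as the paper: you pull $h$ back to $H_i$ via $\operatorname{res}_{i,n}^{-1}$, take the hypothesized invariant representative $\widehat h\in K_i$, choose $m$ large enough for both cylinder invariance and the three-vertex condition, and identify the germ of $\kappa_{\xi_{i,m}}^{-1}\widehat h\kappa_{\xi_{i,m}}$ at $\xi_i^{(m)}$ with $h_m$. Your explicit telescoping of the $\rho_{i,k}$ and your appeal to the simplicity argument for the three-vertex level spell out what the paper's proof leaves implicit.
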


\begin{proof}
Fix $i$, $n\geq1$, and $h\in H_{i,n}$. Put $h_0=\operatorname{res}_{i,n}^{-1}(h)\in H_i$, and choose a representative $\widehat{h}\in K_i$ of $h_0$ as in the hypothesis. Choose $m\geq n$ sufficiently large so that $\widehat{h}([\xi_{i,m}])=[\xi_{i,m}]$ and the tile containing $\xi_{i,m}$ has at least three vertices. By the definition of the canonical restriction map, the germ of $\kappa_{\xi_{i,m}}^{-1}\widehat{h}\kappa_{\xi_{i,m}}$ at $\xi_i^{(m)}$ is the canonical image $h_m\in H_{i,m}$ of $h$. Thus $\widehat{h}$ satisfies the hypotheses of Proposition~\ref{prop:singular-localization-parity}.
\end{proof}

\begin{thm}
\label{thm:AF-parity-full-group-completion}
Let $\widehat{G}$ be a finitely generated group of bounded type over a simple Bratteli diagram $\mathsf{B}$ with infinite path space, satisfying the finite singular germ condition, and put $\mathfrak{G}=\operatorname{Germ}(\widehat{G}\curvearrowright\Omega(\mathsf{B}))$. Suppose that $\mathsf{A}(\mathfrak{T}_{\mathsf{B}})\leq\widehat{G}$ and that the singular germs satisfy the localization condition.

Let $\mathsf{P}_{\mathsf{B}}=H_0(\mathfrak{T}_{\mathsf{B}};\mathbb{Z}/2\mathbb{Z})$ and \[\mathsf{P}_{\widehat{G}}=\varepsilon_{\mathsf{B}}\bigl(\widehat{G}\cap\mathsf{S}(\mathfrak{T}_{\mathsf{B}})\bigr).\] Put $d=\dim_{\mathbb{Z}/2\mathbb{Z}}(\mathsf{P}_{\mathsf{B}}/\mathsf{P}_{\widehat{G}})$. Then there exist $\tau_1,\ldots,\tau_d\in\mathsf{S}(\mathfrak{T}_{\mathsf{B}})$ such that \[\widehat{G}^{\mathrm{par}}=\left\langle\widehat{G},\tau_1,\ldots,\tau_d\right\rangle=\mathsf{F}(\mathfrak{G}).\]

In particular, $\widehat{G}=\mathsf{F}(\mathfrak{G})$ if and only if $\mathsf{P}_{\widehat{G}}=\mathsf{P}_{\mathsf{B}}$.

If, in addition, the AF truncations satisfy the parity condition, then $\mathsf{F}(\mathfrak{G})=\mathsf{S}(\mathfrak{T}_{\mathsf{B}})\widehat{G}$ and \[\bigl[\mathsf{F}(\mathfrak{G}):\widehat{G}\bigr]=\bigl[\mathsf{P}_{\mathsf{B}}:\mathsf{P}_{\widehat{G}}\bigr]<\infty.\]
\end{thm}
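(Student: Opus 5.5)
The inclusion $\widehat{G}^{\mathrm{par}}\leq\mathsf{F}(\mathfrak{G})$ is immediate. The generators of $\widehat{G}$ lie in $\mathsf{F}(\mathfrak{G})$ by definition of $\mathfrak{G}$. Each $\tau_j$, chosen via the surjectivity of $\varepsilon_{\mathsf{B}}$ in Proposition~\ref{prop:AF-parity-exact-sequence}, lies in $\mathsf{S}(\mathfrak{T}_{\mathsf{B}})\leq\mathsf{F}(\mathfrak{G})$, because $\mathfrak{T}_{\mathsf{B}}$ is an open subgroupoid of $\mathfrak{G}$.

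For the reverse inclusion, fix $g\in\mathsf{F}(\mathfrak{G})$ and apply Proposition~\ref{prop:finite-boundary-normal-form}. This gives a common level $n$ and pieces $[\gamma_j;D_j]\mapsto[\eta_j;\widetilde h_j(D_j)]$, where only finitely many pieces carry a nontrivial singular label $h_j\in H_{i_j,n}$. For each singular piece, use the localization condition to pass to a level $m_j\geq n$ and obtain localizations via Proposition~\ref{prop:singular-localization-parity}. Since the level can be raised further by transporting labels with the maps $\rho_{i,k}$, I would refine everything to one common level $m$. At that level I would take $m$ large enough that each singular piece is refined so that the singular point $\gamma_j\xi_{i_j}^{(n)}$ lies in a full cylinder $[\gamma'_j]$ of level $m$ compatible with $h_{j,m}$, and all other refined pieces near it are AF.

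Let $h_{\gamma'_j}\in\widehat{G}^{\mathrm{par}}$ be the corresponding localizations; these have pairwise disjoint supports. Put $u=\prod_j h_{\gamma'_j}$ and $q=g u^{-1}$. By the last assertion of Proposition~\ref{prop:singular-localization-parity}, every germ of $u$ other than those at the points $x_{\gamma'_j}$ is AF. At $x_{\gamma'_j}$, the germ of $g$ factors as an AF prefix replacement composed with $h_{j,m}$, and this is exactly what the germ of $h_{\gamma'_j}$ provides. So every germ of $q$ lies in $\mathfrak{T}_{\mathsf{B}}$. The uniqueness in Proposition~\ref{prop:singular-factorization} is what makes these singular labels cancel cleanly, with no residual isotropy. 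By Corollary~\ref{cor:AF-normal-form-criterion}, or by the compactness argument used for $a$ in Proposition~\ref{prop:singular-localization-parity}, we get $q\in\mathsf{S}(\mathfrak{T}_{\mathsf{B}})\leq\widehat{G}^{\mathrm{par}}$ by Corollary~\ref{cor:AF-symmetric-parity-completion}. Hence $g=qu\in\widehat{G}^{\mathrm{par}}$.

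The main obstacle is the bookkeeping in this germ-cancellation step. One must check that after refinement the localized cylinder around each singular point is entirely contained in a single source piece, and that the local representative $\widetilde h_{j}$ agrees in germ with $\widetilde h_m$ at $\xi_i^{(m)}$. Both should follow from compatibility of $\operatorname{res}_{i,m}$ with the $\rho$'s.

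For the ``in particular'' claim: if $\mathsf{P}_{\widehat{G}}=\mathsf{P}_{\mathsf{B}}$ then $d=0$ and $\widehat{G}=\mathsf{F}(\mathfrak{G})$. Conversely, if $\widehat{G}=\mathsf{F}(\mathfrak{G})$, then $\widehat{G}\supseteq\mathsf{S}(\mathfrak{T}_{\mathsf{B}})$, so $\mathsf{P}_{\widehat{G}}=\mathsf{P}_{\mathsf{B}}$ by surjectivity of $\varepsilon_{\mathsf{B}}$.

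Under the parity condition, Corollary~\ref{cor:localization-in-original-group} puts every localization $h_{\gamma'_j}$ in $\widehat{G}$. The argument above then gives $g=qu$ with $q\in\mathsf{S}(\mathfrak{T}_{\mathsf{B}})$ and $u\in\widehat{G}$, so $\mathsf{F}(\mathfrak{G})=\mathsf{S}(\mathfrak{T}_{\mathsf{B}})\widehat{G}$. For the index, the map $\mathsf{S}(\mathfrak{T}_{\mathsf{B}})/(\mathsf{S}(\mathfrak{T}_{\mathsf{B}})\cap\widehat{G})\to\mathsf{F}(\mathfrak{G})/\widehat{G}$ is a bijection of coset spaces. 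Since $\mathsf{A}(\mathfrak{T}_{\mathsf{B}})\leq\mathsf{S}(\mathfrak{T}_{\mathsf{B}})\cap\widehat{G}$, the map $\varepsilon_{\mathsf{B}}$ identifies the left side with $\mathsf{P}_{\mathsf{B}}/\mathsf{P}_{\widehat{G}}$. That quotient is finite by Proposition~\ref{prop:AF-parity-exact-sequence}, giving $\bigl[\mathsf{F}(\mathfrak{G}):\widehat{G}\bigr]=\bigl[\mathsf{P}_{\mathsf{B}}:\mathsf{P}_{\widehat{G}}\bigr]<\infty$.
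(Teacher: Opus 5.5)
Your strategy is the paper's. You localize each of the finitely many non-AF germs of $g$ inside a small cylinder and multiply the disjointly supported localizations into $u\in\widehat{G}^{\mathrm{par}}$. Then $gu^{-1}$ has only AF germs, so it lies in $\mathsf{S}(\mathfrak{T}_{\mathsf{B}})\leq\widehat{G}^{\mathrm{par}}$. The equality criterion and the index count are also the paper's.

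The step that does not follow as written is the synchronization to one common level $m$ at which all localizations $h_{\gamma'_j}$ exist. The localization condition only gives, for each germ, \emph{some} level $m_j$ and a representative $\widehat{h}_j\in K_{i_j}$ with $\widehat{h}_j([\xi_{i_j,m_j}])=[\xi_{i_j,m_j}]$. This invariance is not inherited by deeper cylinders. Transporting labels with the maps $\rho_{i,k}$ refines the normal form, but it does not transport the invariance hypothesis of Proposition~\ref{prop:singular-localization-parity}. In the paper's LMS example, the representatives $\overline{L}'_0,L'_1,L'_2$ preserve $[c_2^m]$ only for $m\equiv0,2,1\pmod 3$, respectively. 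So with these representatives, two non-AF germs of $g$ with different primed components have no common admissible level.

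The fix is to localize each germ at its own depth, since the cancellation never needs a common level. The set $\mathcal{U}\setminus\mathfrak{T}_{\mathsf{B}}$ is closed and discrete in the compact bisection $\mathcal{U}$, so it is finite, say with sources $x_1,\ldots,x_N$. This finiteness is also what justifies your claim that each localized cylinder meets no other non-AF source of $g$; Proposition~\ref{prop:finite-boundary-normal-form} does not state this by itself.

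\begin{enumerate}
\item Choose levels $n_j$ such that the prefix cylinders $[\gamma_j]$ of the $x_j$ are pairwise disjoint.
\item Apply the localization condition separately to each $\operatorname{res}_{i_j,n_j}(h_j)\in H_{i_j,n_j}$. This gives $m_j\geq n_j$ and a localization supported in the length-$m_j$ prefix cylinder $[\delta_j]\subseteq[\gamma_j]$. These cylinders have different depths but remain disjoint.
\item The germ computation at each $x_j$ takes place inside one cylinder.
\item At points $x\notin\{x_1,\ldots,x_N\}$, use that $u$ fixes every $x_j$, so $u^{-1}(x)$ is not a non-AF source of $g$.
\end{enumerate}

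With this change your argument is the paper's proof. The paper uses Proposition~\ref{prop:singular-factorization} directly and does not need the normal form.
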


\begin{proof}
By Corollary~\ref{cor:AF-symmetric-parity-completion}, we have $\mathsf{S}(\mathfrak{T}_{\mathsf{B}})\leq
\widehat{G}^{\mathrm{par}}$. Every generator adjoined in the definition of $\widehat{G}^{\mathrm{par}}$ belongs to $\mathsf{S}(\mathfrak{T}_{\mathsf{B}})
\leq\mathsf{F}(\mathfrak{G})$. Since
$\widehat{G}\leq\mathsf{F}(\mathfrak{G})$, it follows that
$\widehat{G}^{\mathrm{par}}
\leq\mathsf{F}(\mathfrak{G})$.

We prove the reverse inclusion. Let $g\in\mathsf{F}(\mathfrak{G})$, and let $\mathcal{U}$ be the full bisection representing $g$. Since
the source map restricts to a homeomorphism $\mathbf{s}|_{\mathcal{U}}:\mathcal{U}\longrightarrow \Omega(\mathsf{B})$,
the bisection $\mathcal{U}$ is compact. The set $\mathcal{N}(g):=
\mathcal{U}\setminus
\mathfrak{T}_{\mathsf{B}}$ is closed in $\mathcal{U}$ and is discrete because $\mathfrak{G}\setminus\mathfrak{T}_{\mathsf{B}}$ is discrete.
Consequently, $\mathcal{N}(g)$ is finite. Write
$\mathcal{N}(g)=
\{\omega_1,\ldots,\omega_N\}$, and $x_j=\mathbf{s}(\omega_j)$.

By Proposition~\ref{prop:singular-factorization}, for every $j$ there
are a unique index $i_j$, a unique
$h_j\in H_{i_j}\setminus\{1\}$, and AF arrows
\[
\beta_j:x_j\longrightarrow\xi_{i_j},
\qquad
\alpha_j:\xi_{i_j}\longrightarrow\mathbf{r}(\omega_j)
\]
such that $\omega_j=\alpha_jh_j\beta_j$. Choose, for every $j$, a level $n_j$ and a path $\gamma_j\in\Omega_{n_j}(\mathsf{B})$ such that $x_j=\gamma_j\xi_{i_j}^{(n_j)}$ and $\beta_j$ is represented at level $n_j$ by the corresponding AF
prefix replacement. Since the points $x_1,\ldots,x_N$ are distinct,
we may increase the levels $n_j$ so that the cylinders $[\gamma_1],\ldots,[\gamma_N]$ are pairwise disjoint.

Put $h_{j,n_j}=\operatorname{res}_{i_j,n_j}(h_j)\in H_{i_j,n_j}$. By the localization condition, there exist $m_j\geq n_j$ and a representative $\widehat{h}_j\in K_{i_j}$ satisfying the hypotheses of
Proposition~\ref{prop:singular-localization-parity}. Let
$\delta_j\in\Omega_{m_j}(\mathsf{B})$ be the length-$m_j$ prefix of
$x_j$. Thus $x_j=\delta_j\xi_{i_j}^{(m_j)}$.

Let $r_j\in\widehat{G}^{\mathrm{par}}$
be the localization to $[\delta_j]$ supplied by Proposition~\ref{prop:singular-localization-parity}. By the construction, $(r_j,x_j)=
\beta_j^{-1}h_j\beta_j$. Moreover, $r_j$ fixes $x_j$, every germ of $r_j$ away from $x_j$ is
AF, and $\operatorname{supp}(r_j)
\subseteq[\delta_j]\subseteq[\gamma_j]$.

The supports of the elements $r_j$ are therefore pairwise disjoint.
Put $r=r_1r_2\cdots r_N\in
\widehat{G}^{\mathrm{par}}$. The factors commute, and $r$ fixes every $x_j$. Define $q=gr^{-1}$. We claim that every germ of $q$ belongs to
$\mathfrak{T}_{\mathsf{B}}$.

At $x_j$, we have
\[
\begin{aligned}
(q,x_j)
&=
(g,x_j)(r^{-1},x_j)\\
&=
\alpha_jh_j\beta_j
\,
\beta_j^{-1}h_j^{-1}\beta_j\\
&=
\alpha_j\beta_j\in\mathfrak{T}_{\mathsf{B}}.
\end{aligned}
\]
Now let $x\notin\{x_1,\ldots,x_N\}$. Since $r$ is a homeomorphism fixing every $x_j$, we also have
$r^{-1}(x)\notin\{x_1,\ldots,x_N\}$. Hence $(g,r^{-1}(x))\in\mathfrak{T}_{\mathsf{B}}$. Moreover, every germ of $r^{-1}$ at $x$ is AF, because the only
possible non-AF sources of the factors $r_j$ are the points $x_j$.
Therefore,
\[(q,x)=(g,r^{-1}(x))(r^{-1},x)\in
\mathfrak{T}_{\mathsf{B}}.\]
Thus every germ of $q$ is AF. Compactness gives a finite clopen
partition on which $q$ is given by AF prefix replacements, so
$q\in\mathsf{S}(\mathfrak{T}_{\mathsf{B}})\leq
\widehat{G}^{\mathrm{par}}$.

Since $g=qr$, we obtain $g\in\widehat{G}^{\mathrm{par}}$. Therefore
\[\mathsf{F}(\mathfrak{G})\leq
\widehat{G}^{\mathrm{par}},\]
and thus
\[\widehat{G}^{\mathrm{par}}=\mathsf{F}(\mathfrak{G}).\]

If $\mathsf{P}_{\widehat{G}}=\mathsf{P}_{\mathsf{B}}$, then $d=0$ and $\widehat{G}^{\mathrm{par}}=\widehat{G}$. Hence $\widehat{G}=\mathsf{F}(\mathfrak{G})$. Conversely, if $\widehat{G}=\mathsf{F}(\mathfrak{G})$, then $\mathsf{S}(\mathfrak{T}_{\mathsf{B}})\leq\widehat{G}$, and therefore $\mathsf{P}_{\widehat{G}}=\mathsf{P}_{\mathsf{B}}$.

Now assume that the AF truncations satisfy the parity condition. By
Corollary~\ref{cor:localization-in-original-group}, each localization
$r_j$ above belongs to $\widehat{G}$. Thus $r\in\widehat{G}$. Since $q\in\mathsf{S}(\mathfrak{T}_{\mathsf{B}})$, every $g\in\mathsf{F}(\mathfrak{G})$ admits a factorization $g=qr$, for
$q\in\mathsf{S}(\mathfrak{T}_{\mathsf{B}})$ and $r\in\widehat{G}$. Consequently,
\[\mathsf{F}(\mathfrak{G})=
\mathsf{S}(\mathfrak{T}_{\mathsf{B}})\widehat{G}.\]

Put
$H:=\mathsf{S}(\mathfrak{T}_{\mathsf{B}})\cap\widehat{G}$. Define
\[\Phi:\mathsf{S}(\mathfrak{T}_{\mathsf{B}})/H\longrightarrow\mathsf{F}(\mathfrak{G})/\widehat{G}, \qquad \Phi(qH)=q\widehat{G}.\]
The map is well defined. It is surjective by the factorization above.
It is injective since
$q_1\widehat{G}=q_2\widehat{G}$
for $q_1,q_2\in\mathsf{S}(\mathfrak{T}_{\mathsf{B}})$ implies $q_2^{-1}q_1\in\widehat{G}\cap
\mathsf{S}(\mathfrak{T}_{\mathsf{B}})
=H$. Thus $\Phi$ is a bijection.

Since $\mathsf{A}(\mathfrak{T}_{\mathsf{B}})\leq
\widehat{G}$, we have $H=
\varepsilon_{\mathsf{B}}^{-1}
(\mathsf{P}_{\widehat{G}})$. The AF parity exact sequence therefore gives $\mathsf{S}(\mathfrak{T}_{\mathsf{B}})/H\cong\mathsf{P}_{\mathsf{B}}/
\mathsf{P}_{\widehat{G}}$.
Combining this with the bijection $\Phi$, we obtain
\[[\mathsf{F}(\mathfrak{G}):\widehat{G}]=
[\mathsf{P}_{\mathsf{B}}:\mathsf{P}_{\widehat{G}}]<\infty.\]
\end{proof}

The parity condition is formulated in terms of the AF truncations arising from localization. The following corollary gives a simple sufficient condition under which this obstruction vanishes automatically.

\begin{cor}[Odd-order criterion for the parity condition]
\label{cor:odd-order-parity}
Assume the hypotheses of Theorem~\ref{thm:AF-parity-full-group-completion}. Suppose that the representatives in the localization condition can always be chosen to have finite odd order. Then the AF truncations satisfy the parity condition. Consequently, $\mathsf{F}(\mathfrak{G})=\mathsf{S}(\mathfrak{T}_{\mathsf{B}})\widehat{G}$ and $\bigl[\mathsf{F}(\mathfrak{G}):\widehat{G}\bigr]=\bigl[\mathsf{P}_{\mathsf{B}}:\mathsf{P}_{\widehat{G}}\bigr]$.
\end{cor}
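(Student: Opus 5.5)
We need to show that if the representative $\widehat{h}\in K_i$ has finite odd order, then its AF truncation $a$ satisfies $\varepsilon_{\mathsf{B}}(a)\in\mathsf{P}_{\widehat{G}}$. Once this holds, the conclusion follows from the last part of Theorem~\ref{thm:AF-parity-full-group-completion}. The idea is that $a$ inherits odd order from $\widehat{h}$, and an element of odd order in $\mathsf{S}(\mathfrak{T}_{\mathsf{B}})$ has trivial parity. Since $0\in\mathsf{P}_{\widehat{G}}$, the parity condition is then satisfied with the same choice of $m$ and $\widehat{h}$.

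First, we compare orders. Let $C=[\xi_{i,m}]$. Both $C$ and its complement are $\widehat{h}$-invariant, and $a$ is the identity on $C$ while it agrees with $\widehat{h}$ on $\Omega(\mathsf{B})\setminus C$. Hence $a^k$ agrees with $\widehat{h}^k$ on the complement for every $k$. If $\widehat{h}$ has odd order $\ell$, then $a^\ell=\Id$, so the order of $a$ divides $\ell$ and is odd.

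Second, we show that $\varepsilon_{\mathsf{B}}(a)=0$. By Proposition~\ref{prop:singular-localization-parity}, $a\in\mathsf{S}(\mathfrak{T}_{\mathsf{B}})$, so $a$ is represented by some $a_n=(\sigma_v)_{v\in V_{n+1}}\in\mathsf{S}_n$. The embeddings $\iota$ are injective, so $a_n^\ell$ is the identity and each $\sigma_v$ satisfies $\sigma_v^\ell=1$ with $\ell$ odd. Every cycle of $\sigma_v$ therefore has odd length, so $\sigma_v$ is even. Thus $\varepsilon_n(a_n)=0$, and by \eqref{parity-map} we get $\varepsilon_{\mathsf{B}}(a)=0$. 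Alternatively, $\varepsilon_{\mathsf{B}}$ is a homomorphism into the elementary abelian $2$-group $\mathsf{P}_{\mathsf{B}}$, so $\varepsilon_{\mathsf{B}}(a)=\ell\,\varepsilon_{\mathsf{B}}(a)=\varepsilon_{\mathsf{B}}(a^\ell)=0$. This second form avoids the cycle argument.

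The only delicate point is the first step. We need the representative chosen in the localization condition to be literally a homeomorphism of finite odd order, not merely to have a germ of odd order. The truncation $a$ must also be built from exactly this $\widehat{h}$ and the invariant cylinder $C$, so that the restriction to the invariant complement commutes with taking powers. Invariance of the complement is what guarantees this, so the argument goes through.
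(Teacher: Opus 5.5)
Your proposal is correct and follows the paper's proof: $a$ is the identity on $C$ and agrees with $\widehat{h}$ on the invariant complement, so $a^{\ell}=\Id$. Applying the homomorphism $\varepsilon_{\mathsf{B}}$ into the $\mathbb{Z}/2\mathbb{Z}$-vector space $\mathsf{P}_{\mathsf{B}}$ with $\ell$ odd then gives $\varepsilon_{\mathsf{B}}(a)=0\in\mathsf{P}_{\widehat{G}}$. Your cycle-length argument is a valid alternative to that homomorphism step.
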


\begin{proof}
Let $\widehat{h}$ be a representative of odd order $N$, let $C$ be the corresponding invariant singular cylinder, and let $a$ be its AF truncation. The map $a$ is the identity on $C$ and agrees with $\widehat{h}$ on the invariant complement of $C$. Hence $a^N=\Id$. Applying the AF parity homomorphism gives $N\varepsilon_{\mathsf{B}}(a)=0$. Since $N$ is odd and $\mathsf{P}_{\mathsf{B}}$ is a vector space over $\mathbb{Z}/2\mathbb{Z}$, it follows that $\varepsilon_{\mathsf{B}}(a)=0$. Thus every AF truncation has parity in $\mathsf{P}_{\widehat{G}}$, and the parity condition holds. The conclusion follows from Theorem~\ref{thm:AF-parity-full-group-completion}.
\end{proof}

\section{Example: a fragmentation group of the modified LMS-group}\label{sec:LMS-modified}
The purpose of this section is to illustrate separately the AF alternating, singular-germ, and parity parts of the full-group completion theorem. The tile inflation process is defined over a Bratteli diagram associated with the Penrose tiling and gives a non-tree tile hierarchy. Let $G_0=\langle L,M,S\rangle$ be the group described in \cite[Subsection~4.5]{JNS16}, whose tile inflation process was described in \cite[Subsection~5.3]{kua26}. We first review the tile inflations of $G_0$.

\begin{figure}[ht]
\centering
\begin{tikzpicture}[
    vertex/.style={circle,fill=black,inner sep=2.1pt},
    elab/.style={font=\small,fill=white,inner sep=1pt},
    scale=1
]

\coordinate (L0) at (0,4.2);
\coordinate (L1) at (0,2.1);
\coordinate (L2) at (0,0);

\coordinate (R0) at (3.0,4.2);
\coordinate (R1) at (3.0,2.);
\coordinate (R2) at (3.0,0);

\draw[line width=0.8pt] (L0) -- node[elab,left=4pt] {$a_1$} (L1);
\draw[line width=0.8pt] (L1) -- (L2);

\draw[line width=0.8pt]
    (L0) -- node[elab,above right=1pt,pos=0.20] {$a_2$} (R1);

\draw[line width=0.8pt]
    (L1) -- node[elab,above left=1pt,pos=0.80] {$c_1$} (R0);

\draw[line width=0.8pt]
    (L1) -- (R2);

\draw[line width=0.8pt]
    (L2) -- (R1);

\draw[line width=0.8pt]
    (R0) to[out=-115,in=115]
    node[elab,left=3pt,pos=0.55] {$b$} (R1);

\draw[line width=0.8pt]
    (R0) to[out=-65,in=65]
    node[elab,right=3pt,pos=0.55] {$c_2$} (R1);

\draw[line width=0.8pt]
    (R1) to[out=-115,in=115] (R2);

\draw[line width=0.8pt]
    (R1) to[out=-65,in=65] (R2);

\node[vertex] at (L0) {};
\node[vertex] at (L1) {};
\node[vertex] at (L2) {};

\node[vertex] at (R0) {};
\node[vertex] at (R1) {};
\node[vertex] at (R2) {};

\node at (1.5,-0.65) {$\bullet$};
\node at (1.5,-1.10) {$\bullet$};
\node at (1.5,-1.55) {$\bullet$};

\end{tikzpicture}
\caption{Bratteli diagram $\mathsf{B}$ related to Penrose tiling.}
\label{fig:penrose-bratteli-diagram}
\end{figure}

There are two types of tiles on each level of $\mathsf{B}$, denoted $\mathcal{T}_{1,n}$ and $\mathcal{T}_{2,n}$, corresponding to the two vertices on each level of $\mathsf{B}$. The tiles $\mathcal{T}_{1,1}$, $\mathcal{T}_{2,1}$, $\mathcal{T}_{1,2}$ and $\mathcal{T}_{2,2}$ are as follows:

\[\mathcal{T}_{1,1}:a_1\dfrac{S}{\quad\quad}c_1,\quad \mathcal{T}_{2,1}:a_2\dfrac{S}{\quad\quad}b\dfrac{M}{\quad\quad}c_2, \]

 \[\mathcal{T}_{1,2}:\mathcal{T}_{1,1}a_1\dfrac{L}{\quad\quad}\mathcal{T}_{2,1}c_1,\quad \mathcal{T}_{2,2}:\mathcal{T}_{1,1}a_2\dfrac{L}{\quad\quad}\mathcal{T}_{2,1}c_2\dfrac{S}{\quad\quad}\mathcal{T}_{2,1}b.\]

For $n\geq 2$, \[\mathcal{T}_{1,n+1}:\mathcal{T}_{1,n}a_1\dfrac{L}{\quad\quad}\mathcal{T}_{2,n}c_1,\quad \mathcal{T}_{2,n+1}:\mathcal{T}_{1,n}a_2\dfrac{L}{\quad\quad}\mathcal{T}_{2,n}c_2\dfrac{L}{\quad\quad}\mathcal{T}_{2,n}b.\]

We now modify the group. We split the transformation $L$ into $L',L''$ whose actions are displayed below: 
 \[\mathcal{T}_{1,2}:\mathcal{T}_{1,1}a_1\dfrac{L'}{\quad\quad}\mathcal{T}_{2,1}c_1,\quad \mathcal{T}_{2,2}:\mathcal{T}_{1,1}a_2\dfrac{L'}{\quad\quad}\mathcal{T}_{2,1}c_2\dfrac{S}{\quad\quad}\mathcal{T}_{2,1}b,\]
and for $n\geq 2$, \[\mathcal{T}_{1,n+1}:\mathcal{T}_{1,n}a_1\dfrac{L'}{\quad\quad}\mathcal{T}_{2,n}c_1,\quad \mathcal{T}_{2,n+1}:\mathcal{T}_{1,n}a_2\dfrac{L'}{\quad\quad}\mathcal{T}_{2,n}c_2\dfrac{L''}{\quad\quad}\mathcal{T}_{2,n}b.\] They are extended identically to the rest of $\Omega(\mathsf{B})$, so $L=L'L''$. The connectors are as follows. On $\mathcal{T}_{1,n+1}$, we have 
\[c_2^{n-1}c_1a_1\dfrac{L'}{\quad\quad}c_2^{n-1}bc_1.\addtag\label{conl11}\]
On $\mathcal{T}_{2,n+1}$, we have 
\[c_2^{n-2}c_1a_1a_2\dfrac{L'}{\quad\quad}c_2^{n-2}c_2bc_2,\addtag\label{conl12}\] and
\[c_2^{n-2}bbc_2\dfrac{L''}{\quad\quad}c_2^{n-2}bbb.\addtag\label{conl22}\]
Denote this modified group by $G:=\langle L',L'',M,S \rangle$. It is called the \emph{modified $LMS$-group}. 

We now define the fragmentation group used in this section. For our purposes, it is enough to describe the fragmentation by relabeling the connector edges. For $k\geq 1$, each label $L^{\varepsilon}$ on the corresponding connectors (\ref{conl11})(\ref{conl12})(\ref{conl22}), for $\varepsilon\in\{',''\}$, is replaced by
\begin{equation} \label{connector112}
\begin{split}
 &  \frac{ \ \quad L^{\varepsilon}_0,L^{\varepsilon}_1 \quad \ }{}, \quad n=3k-1,\\
& \frac{ \ \quad L^{\varepsilon}_0,L^{\varepsilon}_2 \quad \ }{},\quad n=3k, \\
 & \frac{ \ \quad L^{\varepsilon}_1,L^{\varepsilon}_2 \quad \ }{}, \quad n=3k+1. 
\end{split}
\end{equation}
See \cite[Subsection~5.3]{kua26} for the full construction. In that construction, the connectors involving $L'$ were left unfragmented, so the superscript $\varepsilon$ was omitted. 

We make one additional finite choice in the single-primed fragmentation. Consider the two occurrences of Connector~1 in the passage from level $1$ to level
$2$. Let
\[
        p\in\mathcal{T}_{1,1}a_1,
        \qquad
        q\in\mathcal{T}_{2,1}c_1,
\]
and
\[
        p'\in\mathcal{T}_{1,1}a_2,
        \qquad
        q'\in\mathcal{T}_{2,1}c_2
\]
be their connecting points.  The union
\[
        U=[p]\sqcup[q]\sqcup[p']\sqcup[q']
\]
is clopen and $L'$-invariant. We define the finitary transformation $\tau$ by
\[\tau(pw)=qw,\qquad\tau(qw)=pw,\qquad\tau(p'w)=q'w,\qquad\tau(q'w)=p'w,\]
and let $\tau$ act identically outside $U$.  Thus $\tau$ is the restriction
of $L'$ to $U$, and
\[\tau=(p\,q)(p'\,q')\in\mathsf{S}(\mathfrak{T}_{\mathsf{B}}).\]

We refine the single-primed fragmentation partition by the clopen set $U$. Let $\overline{L}'_0,\overline{L}'_1,\overline{L}'_2$ be the primed fragments defined by the periodic rule~\eqref{connector112}. On every piece of the fragmentation partition, exactly two of these three transformations act as $L'$ and the remaining one acts as the identity.
Since $L'$ is an involution, we have $\overline{L}'_0\overline{L}'_1\overline{L}'_2=\Id$.

The transformations $\tau,\overline{L}'_0,\overline{L}'_1,\overline{L}'_2$ are restrictions of the same involution $L'$ to $L'$-invariant clopen sets.  After passing to their common clopen refinement, each of them acts on every atom either as $L'$ or as the
identity. In particular, they commute.

We now put
\[L'_0:=\tau\overline{L}'_0,\qquad L'_1:=\overline{L}'_1,\qquad L'_2:=\overline{L}'_2.\]
These transformations are still fragments of $L'$. Indeed, on every atom of the common refinement, the product $\tau\overline{L}'_0$ acts either as $L'$ or as the identity.  Moreover,
\begin{equation}
        L'_0L'_1L'_2=\tau.
        \label{eq:LMS-finite-modification}
\end{equation}

We use the notation $L'_0,L'_1,L'_2$ for these modified fragments in the sequel. The support of $\tau$ is disjoint from a sufficiently small neighborhood of $\xi=c_2^\omega$. Hence $\tau$ has trivial germ at $\xi$, and this finite modification does not change the germs of the primed fragments at $\xi$.

Define \[\widehat{G}_{\mathrm{LMS}}:=\langle L'_0,L'_1,L'_2,L''_0,L''_1,L''_2,M,S \rangle.\] We call $\widehat{G}_{\mathrm{LMS}}$ \emph{a fragmentation group of the modified $LMS$-group}. 

The transformation $\tau$ supplies the selector at the corresponding connector on level $2$, as in the proof of Proposition~\ref{prop:LMS-AF-alternating-containment}, while Equation~\eqref{eq:LMS-finite-modification} makes the original periodic fragment $\overline{L}'_0=\tau L'_0$ available in $\widehat{G}_{\mathrm{LMS}}$. Since $\tau$ has trivial germ at $\xi$, it does not change the singular germ data. Moreover, $\tau\in\mathsf{S}(\mathfrak{T}_{\mathsf{B}})$, so this finite modification does not change the groupoid of germs. The AF parity group is already generated by the classes of $M$ and $S$; see Proposition~\ref{prop:LMS-AF-parity}.

\subsection{Containment of the AF alternating group} \label{LMScontain}

\begin{prop}\label{prop:LMS-AF-alternating-containment}
   The group $\widehat{G}_{\mathrm{LMS}}$ contains $\mathsf{A}(\mathcal{T}_{1,n})\oplus\mathsf{A}(\mathcal{T}_{2,n})$ for all $n\geq 1$. Consequently, $\mathsf{A}(\mathfrak{T}_{\mathsf{B}})\leq\widehat{G}_{\mathrm{LMS}}$. 
\end{prop}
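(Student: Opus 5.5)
We argue by induction on $n$, proving that for every $n\geq 1$ the group $\widehat{G}_{\mathrm{LMS}}$ contains $\mathsf{A}(\mathcal{T}_{1,n})\oplus\mathsf{A}(\mathcal{T}_{2,n})$, viewed as prefix permutations at level $n$. The second claim then follows at once from Definition~\ref{def:AF-full-groups-direct-limits}, since $\mathsf{A}_n=\mathsf{A}(\mathcal{T}_{1,n})\oplus\mathsf{A}(\mathcal{T}_{2,n})$ and $\mathsf{A}(\mathfrak{T}_{\mathsf{B}})$ is the increasing union of these images.

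\textbf{Base levels.} The first tiles are too small for alternating groups to be informative: $\mathcal{T}_{1,1}$ has two vertices and $\mathcal{T}_{2,1}$ has three. We therefore start the genuine induction at a small level, say $n=2$ or $n=3$, where $|\mathcal{T}_{1,n}|\geq 3$. There we compute directly with the finitary generators $S$ and $M$ and with the selector $\tau=L'_0L'_1L'_2$, which is a level-$2$ prefix permutation. The elements $S$, $M$, $\tau$ and their products and commutators act as explicit prefix permutations on the level-$2$ tiles of sizes $3$ and $5$. We check that commutators of these elements produce $3$-cycles generating $\mathsf{A}(\mathcal{T}_{1,2})\times\mathsf{A}(\mathcal{T}_{2,2})$. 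We use Proposition~\ref{prop:finite-alternating-gluing} to reduce this to exhibiting enough $3$-cycles with connected support. The point is to separate the two factors rather than obtain only a diagonal copy, and this is exactly what $\tau$ is designed to provide. The level-$1$ statement is then obtained, or is trivial, by diagonal embedding.

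\textbf{Inductive step.} Assume $\mathsf{A}_n\leq\widehat{G}_{\mathrm{LMS}}$. The tile $\mathcal{T}_{1,n+1}$ is the union of the copies $\mathcal{T}_{1,n}a_1$ and $\mathcal{T}_{2,n}c_1$. The tile $\mathcal{T}_{2,n+1}$ is the union of $\mathcal{T}_{1,n}a_2$, $\mathcal{T}_{2,n}c_2$ and $\mathcal{T}_{2,n}b$. By hypothesis we have only diagonal alternating groups over the copies, for example $\Delta(\mathsf{A}(\mathcal{T}_{2,n}c_1),\mathsf{A}(\mathcal{T}_{2,n}c_2),\mathsf{A}(\mathcal{T}_{2,n}b))$. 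The fragments $L^\varepsilon_k$ serve as selectors. By the periodic rule \eqref{connector112}, at the level $n$ in question exactly one of the three fragments acts trivially on the new connector while acting as $L^\varepsilon$ on all older connectors. Multiplying two fragments, or conjugating by the appropriate product, therefore yields an element $\ell$ whose restriction swaps only the new connector pair at level $n+1$, modulo permutations already available at lower levels. For a $3$-cycle $c\in\mathsf{A}$ of a single copy that moves the connecting vertex, the conjugate $\ell c\ell^{-1}$ is a $3$-cycle whose support meets two different copies. Commutators of these conjugates with diagonal elements then isolate $3$-cycles inside single copies, which breaks the diagonal. Proposition~\ref{prop:finite-alternating-gluing}(2)--(4), applied along the connector graph of each tile (which is connected because the tiles are connected), then gives the full $\mathsf{A}(\mathcal{T}_{1,n+1})\oplus\mathsf{A}(\mathcal{T}_{2,n+1})$.

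\textbf{Main obstacle.} The hardest part is controlling the effect of the selector elements away from the new connector. Every fragment also acts on connectors at deeper levels, and near the boundary point $\xi=c_2^\omega$. We must ensure that a suitable word in $L^\varepsilon_0,L^\varepsilon_1,L^\varepsilon_2$ acts trivially on a neighbourhood of all deeper connectors that meet the supports of the $3$-cycles we conjugate. We plan to handle this with the mod-$3$ periodicity, by choosing the product of the two fragments that are both active at the current level. A commutator $[\ell c\ell^{-1},c']$ with $c'$ supported in one copy then kills any residual deeper action, because that action is disjoint from the support of $c'$. We expect the primed and double-primed connectors \eqref{conl11}--\eqref{conl22} to require separate bookkeeping. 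The selector $\tau$ is needed only at level $2$, where the unfragmented rule would otherwise fail.
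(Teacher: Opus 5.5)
\textbf{There is a genuine gap.} Your outline matches the paper's: induction on the level, fragments of $L^\varepsilon$ as connector selectors, conjugating $3$-cycles across connectors, and gluing with Proposition~\ref{prop:finite-alternating-gluing}. A finite check at level $2$ with $S,M,\tau$ is an acceptable base; it is the paper's first inductive step, with $s'_1=\tau$ and $s''_1=S$. Note that the level-$2$ tiles have $5$ and $8$ vertices, not $3$ and $5$. Below, write $A=\mathcal{T}_{1,n}a_1$, $C=\mathcal{T}_{2,n}c_1$, $A'=\mathcal{T}_{1,n}a_2$, $C'=\mathcal{T}_{2,n}c_2$, $B=\mathcal{T}_{2,n}b$. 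Connector~1 is the synchronized pair \eqref{conl11}--\eqref{conl12}, with connecting points $p\in A$, $q\in C$, $p'\in A'$, $q'\in C'$. Connector~2 is \eqref{conl22}.

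\textbf{Your selector is wrong.} On every atom of the fragmentation, exactly two of the three fragments act as the involution $L^\varepsilon$. So the product of two fragments is the third one (for the primed ones up to $\tau$, which is trivial beyond level $2$). In particular, the product of the two fragments active at the current level is \emph{inactive} on the new connector. No fragment is active on ``all older connectors'' either: each is inactive on a whole residue class mod $3$. Any word active at the current level is active again three levels deeper, so the control you seek in your last paragraph cannot come from the choice of word. The paper uses a single active fragment $s'_n\in\{\overline{L}'_0=\tau L'_0,\,L'_2,\,L'_1\}$ for $n\equiv2,0,1\pmod 3$, and $s''_n\in\{L''_0,L''_2,L''_1\}$ at Connector~2. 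It needs only local information about $s'_n$:
\begin{itemize}
\item $s'_n$ exchanges the connecting points by prefix replacements with trivial sections;
\item $s'_n$ is the identity, with trivial sections, at two further points $\xi,\eta\in C\setminus\{q\}$, at their copies in $C'$, and at the copies $\widetilde q,\widetilde\xi,\widetilde\eta$ in $B$. These are points beginning with $a_2$, none of whose prefixes is a connecting point.
\end{itemize}
Also, the hypothesis gives you only \emph{diagonal} $3$-cycles, not $3$-cycles of a single copy. The paper conjugates $\widehat\tau=(q\,\xi\,\eta)(q'\,\xi'\,\eta')(\widetilde q\,\widetilde\xi\,\widetilde\eta)$ by $s'_n$ and multiplies by $\widehat\tau^{-1}$. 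This cancels the $B$-component and gives $(p\,\xi\,q)(p'\,\xi'\,q')$, whatever $s'_n$ does elsewhere.

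\textbf{You do not handle the synchronization between the two tiles.} Connector~1 occurs simultaneously in $\mathcal{T}_{1,n+1}$ and $\mathcal{T}_{2,n+1}$. Each of its selectors, including $\tau=(p\,q)(p'\,q')$, acts on both occurrences at once, and $\iota_{n,n+1}(\mathsf{A}_n)$ acts identically on $A,A'$ and on $C,C'$. So everything you build through Connector~1 comes in synchronized pairs, and gluing yields only $D_{n+1}=\Delta\bigl(\mathsf{A}(A\cup C),\mathsf{A}(A'\cup C')\bigr)$. Gluing ``along the connector graph of each tile'' separately is therefore not available, and $\tau$ does not separate the two factors. The missing idea is to exploit the $B$-copy, which exists only in $\mathcal{T}_{2,n+1}$:
\begin{enumerate}
\item Dividing $\iota_{n,n+1}(\delta)=\delta_C\delta_{C'}\delta_B$ by $\delta_C\delta_{C'}\in D_{n+1}$ isolates $\mathsf{A}(B)$.
\item The unsynchronized Connector~2, with selector $s''_n$, glues $B$ to $C'$ and gives $\mathsf{A}(B\cup C')$.
\item Conjugating $\mathsf{A}(\{q',\xi',\eta'\})\leq\mathsf{A}(C')$ by $s'_n$ glues $A'$ to $C'$.
\item Dividing elements of $D_{n+1}$ by their $(A'\cup C')$-components gives $\mathsf{A}(A\cup C)$.
\item Gluing $\mathsf{A}(B\cup C')$ with $\mathsf{A}(A'\cup C')$ gives $\mathsf{A}(\mathcal{T}_{2,n+1})$.
\end{enumerate}
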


\begin{proof}
For brevity, when we pass from level $n$ to level $n+1$ on $\mathsf{B}$, put 
    \[A:=\mathcal{T}_{1,n}a_1,\quad A':=\mathcal{T}_{1,n}a_2\] and 
    \[C:=\mathcal{T}_{2,n}c_1,\quad C':=\mathcal{T}_{2,n}c_2, \quad B:=\mathcal{T}_{2,n}b. \]
Then, as sets of vertices, 
\[\mathcal{T}_{1,n+1}=A\sqcup C\quad\text{and}\quad\mathcal{T}_{2,n+1}=A'\sqcup C'\sqcup B. \addtag\label{LMS-simplied-notation}\]
Therefore, if we assume $\mathsf{A}(\mathcal{T}_{1,n})\oplus\mathsf{A}(\mathcal{T}_{2,n})\leq\widehat{G}_{\mathrm{LMS}}$ for some $n\in\mathbb{N}$, then $\widehat{G}_{\mathrm{LMS}}$ contains the diagonal embeddings 
\[\iota_{n,n+1}(\mathsf{A}(\mathcal{T}_{1,n}))=\{\sigma_{A}\sigma_{A'}:\sigma\in\mathsf{A}(\mathcal{T}_{1,n})\}\] and 
\[\iota_{n,n+1}(\mathsf{A}(\mathcal{T}_{2,n}))=\{\tau_{C}\tau_{C'}\tau_{B}:\tau\in\mathsf{A}(\mathcal{T}_{2,n})\},\]
where the lower indices mean acting as the original transformation on the corresponding embedded copy. 
  
We proceed by induction on $n$. The case when $n=1$ is treated slightly differently, as shown in the following lemma.
  \begin{lemma}\label{lem:LMS-level-one}
      The group $\widehat{G}_{\mathrm{LMS}}$ contains $\mathsf{S}(\mathcal{T}_{1,1})\oplus\mathsf{S}(\mathcal{T}_{2,1})$. 
  \end{lemma}  
\begin{proof}
Let $n=1$. Since $M$ and $S$ are finitary, their actions at level $1$ are
\[S=(a_1\,c_1)(a_2\,b),\qquad M=(b\,c_2).
\]
Thus $[S,M]=SMSM=(a_2\,c_2\,b)$, up to the choice of orientation of the
$3$-cycle. Hence $\langle [S,M],M\rangle=\mathsf{S}(\mathcal{T}_{2,1})\leq\widehat{G}_{\mathrm{LMS}}$. In particular,
\[ (a_2\,b)=[S,M]M[S,M]^{-1}\in\mathsf{S}(\mathcal{T}_{2,1}).\]
Therefore, 
\[(a_1\,c_1)=S\cdot(a_2\,b)\in\widehat{G}_{\mathrm{LMS}}.\]
We conclude that $\mathsf{S}(\mathcal{T}_{1,1})\oplus\mathsf{S}(\mathcal{T}_{2,1})\leq\widehat{G}_{\mathrm{LMS}}$.
\end{proof}

Let $n\geq1$. For $n=1$, put $R_{1,1}:=\mathsf{S}(\mathcal{T}_{1,1})$ and $R_{2,1}:=\mathsf{S}(\mathcal{T}_{2,1})$. For $n\geq2$, put $R_{1,n}:=\mathsf{A}(\mathcal{T}_{1,n})$ and $R_{2,n}:=\mathsf{A}(\mathcal{T}_{2,n})$. The groups $R_{1,n}$ and
$R_{2,n}$ belong to $\widehat{G}_{\mathrm{LMS}}$ by Lemma~\ref{lem:LMS-level-one} when
$n=1$, and by the induction hypothesis when $n\geq2$.
  
We call the synchronized pair of connectors \eqref{conl11} and \eqref{conl12}, connecting $A$ with $C$ and $A'$ with $C'$, respectively, \emph{Connector~1}. We call the connector \eqref{conl22}, connecting $C'$ with $B$, \emph{Connector~2}.

To shorten the notations, denote the connecting points of Connector~1 by
\[p\in A, \quad q\in C, \quad p'\in A',\quad q'\in C'.\]
Denote the connecting points of Connector~2 by $u\in C',\quad v\in B$.

\begin{lemma}[Selectors on connectors]
\label{lem:LMS-connector-selectors}
Let
\[p\in A,\quad q\in C,\quad
p'\in A',\quad q'\in C'\]
be the connecting points of Connector~1, and let $u\in C'$ and
$v\in B$ be the connecting points of Connector~2. Then there exist elements $s'_n,s''_n\in\widehat{G}_{\mathrm{LMS}}$ with the following properties.

\begin{enumerate}
\item[(1)] The element $s'_n$ interchanges $p$ with $q$ and $p'$ with $q'$ by prefix replacements, and
\[s'_n(C)=(C\setminus\{q\})\cup\{p\},
\qquad s'_n(C')=(C'\setminus\{q'\})\cup\{p'\}.\]

\item[(2)]
There exist distinct points $\xi,\eta\in C\setminus\{q\}$ such that $s'_n$ fixes $\xi$ and $\eta$, fixes their corresponding copies $\xi',\eta'\in C'$, and fixes the corresponding copies $\widetilde{q},\widetilde{\xi},\widetilde{\eta}\in B$. The sections of $s'_n$ at these seven fixed points are trivial.

\item[(3)]
The element $s''_n$ interchanges $u$ with $v$ by a prefix replacement and satisfies
\[s''_n(B)=(B\setminus\{v\})\cup\{u\}.\]
Moreover, if $n\geq2$, then $s''_n$ fixes the points $\widetilde{\xi},\widetilde{\eta}$ from item~(2), and its sections at these two points are trivial.
\end{enumerate}
\end{lemma}

\begin{proof}
For $n=1$, take $s'_1=\tau$ and $s''_1=S$. The two connecting points of Connector~1 on the $\mathcal{T}_{2,1}$-copies are $q=bc_1$ and $q'=bc_2$, while the corresponding point in $B$ is $\widetilde q=bb$. We may take $\xi=a_2c_1$ and $\eta=c_2c_1$.
Their corresponding copies are
\[\xi'=a_2c_2,\qquad \eta'=c_2c_2,
\qquad\widetilde\xi=a_2b,\qquad\widetilde\eta=c_2b.\]
The transformation $\tau$ exchanges only the two pairs of connecting points of Connector~1, so it fixes $\xi,\eta,\xi',\eta',\widetilde\xi,\widetilde\eta,\widetilde q$. This proves items~(1) and~(2). At the other new connector in $\mathcal{T}_{2,2}$, the transformation $S$ sends $v$ to $u$ and preserves every other point of $B$ inside $B$, which proves item~(3).

Now let $n\geq2$. In the residue class $n\equiv2\pmod3$, we use the original periodic fragment $\overline{L}'_0$. Although $\overline{L}'_0$ is no longer one of the chosen generators, it still belongs to $\widehat{G}_{\mathrm{LMS}}$: Equation~\eqref{eq:LMS-finite-modification} gives $\tau\in\widehat{G}_{\mathrm{LMS}}$, and from $L'_0=\tau\overline{L}'_0$ and $\tau^2=\Id$ we obtain
\[
\overline{L}'_0=\tau L'_0\in\widehat{G}_{\mathrm{LMS}}.
\]
Thus the periodic rule~\eqref{connector112} gives the following finite-state choice:
\[
\begin{array}{c|c|c|c}
 n\pmod 3
 & \text{labels active at the level-$n$ connectors}
 & s'_n
 & s''_n\\ \hline
 2 & \{0,1\} & \overline{L}'_0 & L''_0\\
 0 & \{0,2\} & L'_2 & L''_2\\
 1 & \{1,2\} & L'_1 & L''_1
\end{array}
\]
Thus $s'_n$ is active at both components of Connector~1, while $s''_n$ is active at Connector~2. At these level-$n$ connectors, they act by prefix replacements with trivial sections. All other nontrivial sections of these elements inside $A,A',C,C'$ and $B$ come from connectors on earlier levels and therefore preserve each of these five copies setwise. It follows that $s'_n$ interchanges $p$ with $q$ and $p'$ with $q'$, and that
\[s'_n(C)=(C\setminus\{q\})\cup\{p\}, \qquad s'_n(C')=(C'\setminus\{q'\})\cup\{p'\}.\]
Similarly, $s''_n$ interchanges $v$ with $u$, sends every other point of $B$ back into $B$, and thus
\[s''_n(B)=(B\setminus\{v\})\cup\{u\}.\]

It remains to verify the assertions in items~(2) and~(3) concerning the fixed points and the triviality of the corresponding sections. Put
\[\theta_n=c_2^{n-1}b,\qquad x_n=a_2b^{n-1},\qquad y_n=a_2c_2^{n-1}\]
as vertices of $\mathcal{T}_{2,n}$. By \eqref{conl11}--\eqref{conl12},
\[q=\theta_nc_1,\qquad q'=\theta_nc_2,\qquad \widetilde q=\theta_nb.\]
Choose
\[
\begin{aligned}
\xi&=x_nc_1, & \eta&=y_nc_1,\\
\xi'&=x_nc_2, & \eta'&=y_nc_2,\\
\widetilde\xi&=x_nb, & \widetilde\eta&=y_nb.
\end{aligned}
\]
The points $\xi$ and $\eta$ are distinct and do not equal $q$.

The initial connecting points of Connector~1 were treated above. For $k\geq2$, the two sides of the level-$k$ instances of Connector~1 have connecting points
\[c_2^{k-1}c_1a_1,\qquad c_2^{k-1}bc_1,\]
and
\[c_2^{k-2}c_1a_1a_2,\qquad c_2^{k-1}bc_2.\]
Each of $\xi,\eta,\xi',\eta',\widetilde\xi,\widetilde\eta$ begins with $a_2$, so none of its prefixes is a connecting point of Connector~1. The path $\widetilde q=\theta_nb$ contains no occurrence of $c_1$, and its first occurrence of $b$ is followed by $b$, whereas at the right-hand connecting points of Connector~1 the corresponding occurrence of $b$ is followed by $c_1$ or $c_2$. Consequently, $s'_n$ fixes $\xi,\eta,\xi',\eta',\widetilde q,\widetilde\xi,\widetilde\eta$, and its sections at all these points are trivial. This proves item~(2).

For $k\geq2$, the connecting points of Connector~2 are
\[c_2^{k-2}bbc_2 \qquad\text{and}\qquad
c_2^{k-2}bbb.\]
The points $\widetilde\xi=x_nb$ and $\widetilde\eta=y_nb$ begin with $a_2$, so none of their prefixes is a connecting point of Connector~2. Consequently, $s''_n$ fixes $\widetilde\xi$ and $\widetilde\eta$, and its sections at these two points are trivial. This proves the final assertion of item~(3).
\end{proof}

Now we divide the containment argument into five steps.
  \medskip 
 
 \noindent
\textbf{Step 1: construction of the synchronized diagonal group.}

By Lemma~\ref{lem:LMS-connector-selectors}, choose distinct points $\xi,\eta\in C\setminus\{q\}$
fixed by $s'_n$, whose corresponding copies $\xi',\eta'\in C'$ and $\widetilde{q},\widetilde{\xi},\widetilde{\eta}\in B$ are also fixed by $s'_n$. The canonical embedding of the $3$-cycle $(q\,\xi\,\eta)\in R_{2,n}$ gives
\[
\widehat{\tau}
=
(q\,\xi\,\eta)
(q'\,\xi'\,\eta')
(\widetilde{q}\,\widetilde{\xi}\,\widetilde{\eta})
\in
\iota_{n,n+1}(R_{2,n})
\leq
\widehat{G}_{\mathrm{LMS}}.
\]
Therefore
\[
s'_n\widehat{\tau}(s'_n)^{-1}
=
(p\,\xi\,\eta)
(p'\,\xi'\,\eta')
(\widetilde{q}\,\widetilde{\xi}\,\widetilde{\eta}),
\]
and thus
\[
\begin{aligned}
\gamma
&=
\bigl(
s'_n\widehat{\tau}(s'_n)^{-1}
\bigr)
\widehat{\tau}^{-1}\\
&=
(p\,\xi\,q)(p'\,\xi'\,q').
\end{aligned}
\]

For $n=1$, the group $R_{2,1}=\mathsf{S}(\mathcal{T}_{2,1})$ is $2$-transitive. For $n\geq2$, the tile $\mathcal{T}_{2,n}$ has at least four vertices, so $R_{2,n}=\mathsf{A}(\mathcal{T}_{2,n})$ is $2$-transitive. Conjugating $\gamma$ by elements of its diagonal embedding $\iota_{n,n+1}(R_{2,n})$ therefore gives $(p\,\alpha\,\beta)(p'\,\alpha'\,\beta')\in\widehat{G}_{\mathrm{LMS}}$ for every pair of distinct points $\alpha,\beta\in C$, where $\alpha',\beta'\in C'$ are the corresponding copies. The component acting on $B$ is disjoint from the support of these $3$-cycles and thus has no effect under conjugation.

By Proposition~\ref{prop:finite-alternating-gluing},
\[\Delta\Bigl(\mathsf{A}(C\cup\{p\}),\mathsf{A}(C'\cup\{p'\})\Bigr)\leq\widehat{G}_{\mathrm{LMS}}.\]

Now let $\rho\in A$, and let $\rho'\in A'$ be the corresponding copy. The diagonal embedding $\iota_{n,n+1}(R_{1,n})$ acts transitively on $A$ and $A'$. Conjugating the preceding diagonal alternating group gives
\[\Delta\Bigl(\mathsf{A}(C\cup\{\rho\}),\mathsf{A}(C'\cup\{\rho'\})\Bigr)
        \leq\widehat{G}_{\mathrm{LMS}}\]
for every $\rho\in A$.

Every support $C\cup\{\rho\}$ contains $C$, and every corresponding support $C'\cup\{\rho'\}$ contains $C'$. Applying Proposition~\ref{prop:finite-alternating-gluing}(3)--(4) gives
\[D_{n+1}:=\Delta\Bigl(\mathsf{A}(A\cup C),\mathsf{A}(A'\cup C')\Bigr)\leq\widehat{G}_{\mathrm{LMS}}.\]

The synchronization defining $D_{n+1}$ respects the subpiece correspondences
\[A\longleftrightarrow A',\qquad C\longleftrightarrow C'.\]
In particular, $\Delta\bigl(\mathsf{A}(C),\mathsf{A}(C')\bigr)\leq D_{n+1}$.

\medskip

\noindent
\textbf{Step 2: isolation of the $B$-copy.}

Let $\delta\in\mathsf{A}(\mathcal{T}_{2,n})$. Then \[\iota_{n,n+1}(\delta)=\delta_C\delta_{C'}\delta_B\in\widehat{G}_{\mathrm{LMS}}.\] Since $D_{n+1}$ contains $\Delta(\mathsf{A}(C),\mathsf{A}(C'))$, it also contains $\delta_C\delta_{C'}$. Consequently, \[(\delta_C\delta_{C'}\delta_B)(\delta_C\delta_{C'})^{-1}=\delta_B\in\widehat{G}_{\mathrm{LMS}}.\]
As $\delta$ is arbitrary, we obtain $\mathsf{A}(B)\leq\widehat{G}_{\mathrm{LMS}}$.

\medskip

\noindent
\textbf{Step 3: gluing $B$ to $C'$ through Connector~2.}

Suppose first that $n=1$. Since $s''_1=S$ is finitary at level~$2$, conjugation gives
\[
s''_1\mathsf{A}(B)(s''_1)^{-1}
=
\mathsf{A}\bigl((B\setminus\{v\})\cup\{u\}\bigr).
\]
Hence Proposition~\ref{prop:finite-alternating-gluing} gives
\[
\mathsf{A}(B\cup\{u\})\leq\widehat{G}_{\mathrm{LMS}}.
\]

Now suppose that $n\geq2$. With the notation of Lemma~\ref{lem:LMS-connector-selectors}, the points $\widetilde\xi,\widetilde\eta\in B\setminus\{v\}$ are distinct, and $s''_n$ has trivial sections at both of them. Moreover, $s''_n$ interchanges $v$ with $u$ by a prefix replacement. Therefore
\[
s''_n
\mathsf{A}(\{v,\widetilde\xi,\widetilde\eta\})
(s''_n)^{-1}
=
\mathsf{A}(\{u,\widetilde\xi,\widetilde\eta\})
\leq
\widehat{G}_{\mathrm{LMS}}.
\]
Since $\mathsf{A}(\{v,\widetilde\xi,\widetilde\eta\})
\leq\mathsf{A}(B)$, the supports $B$ and $\{u,\widetilde\xi,\widetilde\eta\}$ overlap in $\{\widetilde\xi,\widetilde\eta\}$. Hence Proposition~\ref{prop:finite-alternating-gluing} gives
\[\mathsf{A}(B\cup\{u\})\leq\widehat{G}_{\mathrm{LMS}}.\]

The diagonal group $\Delta\bigl(\mathsf{A}(C),\mathsf{A}(C')\bigr)\leq D_{n+1}$ acts transitively on $C'$. Its component on $C$ is disjoint from $B\cup C'$, so conjugation moves $u$ to any $w\in C'$ while leaving $B$ invariant. Therefore
\[\mathsf{A}(B\cup\{w\})\leq\widehat{G}_{\mathrm{LMS}}
\qquad\text{for every }w\in C'.\]
The supports $B\cup\{w\}$, $w\in C'$, have the common copy $B$. Proposition~\ref{prop:finite-alternating-gluing} gives
\[\mathsf{A}(B\cup C')\leq\widehat{G}_{\mathrm{LMS}}.\]
In particular,
\[
\mathsf{A}(C')\leq\widehat{G}_{\mathrm{LMS}}.
\]
\medskip

\noindent
\textbf{Step 4: gluing $A'$ to $C'$ through Connector~1.}

By Lemma~\ref{lem:LMS-connector-selectors}, the element $s'_n$ interchanges $q'$ with $p'$ by a prefix replacement and has trivial sections at the distinct points $\xi',\eta'\in C'\setminus\{q'\}$. Since
\[\mathsf{A}(\{q',\xi',\eta'\})
\leq\mathsf{A}(C')\leq\widehat{G}_{\mathrm{LMS}},\]
we obtain
\[s'_n\mathsf{A}(\{q',\xi',\eta'\})
(s'_n)^{-1}=\mathsf{A}(\{p',\xi',\eta'\})\leq\widehat{G}_{\mathrm{LMS}}.\]
The supports $C'$ and $\{p',\xi',\eta'\}$ overlap in $\{\xi',\eta'\}$. Hence Proposition~\ref{prop:finite-alternating-gluing} gives
\[\mathsf{A}(C'\cup\{p'\})\leq\widehat{G}_{\mathrm{LMS}}.
\]

The diagonal embedding $\iota_{n,n+1}(R_{1,n})$ acts transitively on $A'$. Its component on $A$ is disjoint from $A'\cup C'$, so conjugation moves $p'$ to any $z\in A'$ while leaving $C'$ invariant. Thus
\[\mathsf{A}(C'\cup\{z\})\leq\widehat{G}_{\mathrm{LMS}}
\qquad\text{for every }z\in A'.\]
The supports $C'\cup\{z\}$, $z\in A'$, have the common core $C'$. Proposition~\ref{prop:finite-alternating-gluing} therefore gives
\[\mathsf{A}(A'\cup C')\leq\widehat{G}_{\mathrm{LMS}}.\]
\medskip

\noindent
\textbf{Step 5: completion of the two level-$(n+1)$ tiles.}

Recall that
\[D_{n+1}=\Delta\Bigl(\mathsf{A}(A\cup C),\mathsf{A}(A'\cup C')\Bigr).\]
Since $\mathsf{A}(A'\cup C')$ is available independently, multiplying elements of $D_{n+1}$ by the inverses of their $A'\cup C'$-components gives \[\mathsf{A}(A\cup C)=\mathsf{A}(\mathcal{T}_{1,n+1})\leq\widehat{G}_{\mathrm{LMS}}.\]
Furthermore, $\mathsf{A}(B\cup C')$ and $\mathsf{A}(A'\cup C')$ overlap on $C'$. Hence Proposition~\ref{prop:finite-alternating-gluing} gives
\[\mathsf{A}(A'\cup B\cup C')=\mathsf{A}(\mathcal{T}_{2,n+1})\leq\widehat{G}_{\mathrm{LMS}}.\]

Therefore $\mathsf{A}_{n+1}\leq\widehat{G}_{\mathrm{LMS}}$. Induction gives $\mathsf{A}_n\leq\widehat{G}_{\mathrm{LMS}}$ for every $n$, and thus
$\mathsf{A}(\mathfrak{T}_{\mathsf{B}})\leq\widehat{G}_{\mathrm{LMS}}$.  
\end{proof}

\subsection{The dimension group and the AF symmetric group}
\label{subsec:LMS-dimension-AF-symmetric}

With the vertices ordered as $1,2$, the incidence matrix of $\mathsf{B}$ is
\[ M_{\mathsf{B}}=
        \begin{pmatrix}
        1&1\\
        1&2
        \end{pmatrix}.
\]

\begin{prop}[Dimension group]
\label{prop:LMS-dimension-group}
Let $\varphi=(1+\sqrt5)/2$. We have an isomorphism of ordered groups
\[H_0(\mathfrak{T}_{\mathsf{B}};\mathbb Z)\cong\mathbb Z[\varphi],\]
where
\[H_0(\mathfrak{T}_{\mathsf{B}};\mathbb Z)^+=\{0\}\cup\bigl(\mathbb Z[\varphi]\cap\mathbb R_{>0}\bigr).\]
In particular, $H_0(\mathfrak{T}_{\mathsf{B}};\mathbb Z)\cong\mathbb Z^2$ as an abelian group. If the initial height vector is $(1,1)$, then the order unit corresponds to $\varphi^2$.
\end{prop}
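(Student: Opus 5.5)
The plan is to use Proposition~\ref{prop:tail-groupoid-dimension-group}, which gives $H_0(\mathfrak{T}_{\mathsf{B}};\mathbb Z)\cong\varinjlim(\mathbb Z^{2},M_{\mathsf{B}})$ with positive cone $\varinjlim(\mathbb Z_{\geq0}^2,M_{\mathsf{B}})$. The diagram is stationary, with the fixed matrix $M_{\mathsf{B}}$ at every level. This matrix has determinant $1$, so $M_{\mathsf{B}}\in\mathrm{GL}_2(\mathbb Z)$ and every connecting map is an isomorphism of $\mathbb Z^2$. The direct limit is therefore identified with $\mathbb Z^2$ at level $1$: send $[x,n]$ to $M_{\mathsf{B}}^{-(n-1)}x$. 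This already gives $H_0\cong\mathbb Z^2$ as an abelian group.

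To make the order explicit, we diagonalize. The characteristic polynomial of $M_{\mathsf{B}}$ is $t^2-3t+1$, whose roots are $\varphi^2$ and $\varphi^{-2}$. We take the row vector $\ell=(1,\varphi)$. One checks that $\ell M_{\mathsf{B}}=(1+\varphi,1+2\varphi)=\varphi^2(1,\varphi)$, using $\varphi^2=\varphi+1$. Define
\[
\Theta([x,n])=\varphi^{-2(n-1)}\,\ell x\in\mathbb R .
\]
The eigen-relation makes $\Theta$ compatible with the connecting maps, so it is a well-defined homomorphism. It is injective because $1,\varphi$ are $\mathbb Z$-linearly independent. Since $\varphi^{\pm2}\in\mathbb Z[\varphi]^\times$, its image is contained in $\mathbb Z[\varphi]$. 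At level $1$ the image of $\mathbb Z^2$ is exactly $\mathbb Z+\mathbb Z\varphi=\mathbb Z[\varphi]$, so $\Theta$ is onto. The order unit is the class $[h_2,2]$ with $h_2=M_{\mathsf{B}}(1,1)^T=(2,3)$. Its image is $\varphi^{-2}(2+3\varphi)$. Since $2+3\varphi=\varphi^4$, this equals $\varphi^2$.

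The main obstacle is the positive cone. One inclusion is easy: if $x\geq0$ is nonzero, then $\ell x>0$. For the converse, take $a\in\mathbb Z[\varphi]$ with $a>0$, written as $a=\varphi^{-2(n-1)}\ell x$ for some $x\in\mathbb Z^2$. We must show that $M_{\mathsf{B}}^kx\in\mathbb Z_{\geq0}^2$ for some $k$. Decompose $x$ along the two eigenvectors of $M_{\mathsf{B}}$. By Perron--Frobenius, the Perron eigenvector $(1,\varphi)^T$ is strictly positive, and $\ell$ is proportional to the positive left eigenvector. Hence the coefficient of $x$ on the $\varphi^2$-eigenvector is a positive multiple of $\ell x=\varphi^{2(n-1)}a>0$. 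The other component is multiplied by $\varphi^{-2k}\to0$. Therefore $\varphi^{-2k}M_{\mathsf{B}}^kx$ converges to a strictly positive vector, and $M_{\mathsf{B}}^kx$ has positive entries for large $k$. Together with the usual convention that $0$ lies in the cone, this gives
\[
H_0(\mathfrak{T}_{\mathsf{B}};\mathbb Z)^+=\{0\}\cup\bigl(\mathbb Z[\varphi]\cap\mathbb R_{>0}\bigr),
\]
so $\Theta$ is an order isomorphism.
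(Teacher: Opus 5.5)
Your proposal is correct and follows the paper's route. You identify $(m,n)\in\mathbb Z^2$ with $m+n\varphi$, so that $M_{\mathsf{B}}$ becomes multiplication by the unit $\varphi^2$ (your relation $\ell M_{\mathsf{B}}=\varphi^2\ell$), read off the cone from the positive Perron--Frobenius functional, and get the order unit $1+\varphi=\varphi^2$. The only difference is that you write out the eigenvector-decomposition argument showing that every positive element of $\mathbb Z[\varphi]$ is eventually represented by a nonnegative vector, which the paper compresses into a one-line appeal to primitivity and Perron--Frobenius.
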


\begin{proof}
The dimension group is the direct limit
\[H_0(\mathfrak{T}_{\mathsf{B}};\mathbb Z)=\varinjlim\left(\mathbb Z^2\xrightarrow{M_{\mathsf{B}}}\mathbb Z^2\xrightarrow{M_{\mathsf{B}}}\cdots\right).\]
Since $\det(M_{\mathsf{B}})=1$ and
\[M_{\mathsf{B}}^{-1}=
\begin{pmatrix}
        2&-1\\
        -1&1
        \end{pmatrix}
        \in \operatorname{GL}_2(\mathbb Z),
\]
the direct limit is isomorphic to $\mathbb Z^2$ as a group.

We identify $(m,n)\in\mathbb Z^2$ with $m+n\varphi\in\mathbb Z[\varphi]$. Then
\[(m+n)+(m+2n)\varphi=\varphi^2(m+n\varphi),\]
so the bonding map is multiplication by the unit $\varphi^2$. The matrix $M_{\mathsf{B}}$ is primitive, and its positive Perron--Frobenius functional is $(m,n)\mapsto m+\varphi n$.  This gives the stated positive cone.  Finally, $(1,1)$ corresponds to $1+\varphi=\varphi^2$.
\end{proof}

\begin{prop}
\label{prop:LMS-AF-parity}
We have $\mathsf{P}_{\mathsf{B}}\cong(\mathbb{Z}/2\mathbb{Z})^2$ and $\mathsf{P}_{\widehat{G}_{\mathrm{LMS}}}=\mathsf{P}_{\mathsf{B}}$. Under the identification of $\mathsf{P}_{\mathsf{B}}$ with $(\mathbb{Z}/2\mathbb{Z})^{V_3}$, we have \[\varepsilon_{\mathsf{B}}(M)=[(1,0),3],\qquad\varepsilon_{\mathsf{B}}(S)=[(0,1),3].\]
\end{prop}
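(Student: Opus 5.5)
We need three things: $\mathsf{P}_{\mathsf{B}}\cong(\mathbb{Z}/2\mathbb{Z})^2$, the parity values of $M$ and $S$, and $\mathsf{P}_{\widehat{G}_{\mathrm{LMS}}}=\mathsf{P}_{\mathsf{B}}$.

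For the first, use Proposition~\ref{prop:tail-groupoid-dimension-group}, which gives $\mathsf{P}_{\mathsf{B}}\cong\varinjlim\bigl((\mathbb{Z}/2\mathbb{Z})^2,\overline{M}_{\mathsf{B}}\bigr)$. Over $\mathbb{Z}/2\mathbb{Z}$ we have $\det\overline{M}_{\mathsf{B}}=1$, so every bonding map is an isomorphism. Hence the limit is $(\mathbb{Z}/2\mathbb{Z})^2$, and each level map $(\mathbb{Z}/2\mathbb{Z})^{V_k}\to\mathsf{P}_{\mathsf{B}}$ is an isomorphism. In particular we may identify $\mathsf{P}_{\mathsf{B}}$ with $(\mathbb{Z}/2\mathbb{Z})^{V_3}$. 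This already matches the statement up to the choice of level.

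Next compute the parities of $M$ and $S$. Both are finitary, with level-$1$ prefix actions $S=(a_1\,c_1)(a_2\,b)$ and $M=(b\,c_2)$ (Lemma~\ref{lem:LMS-level-one}). Each is an element of $\mathsf{S}_1=\mathsf{S}(\mathcal{T}_{1,1})\times\mathsf{S}(\mathcal{T}_{2,1})$, indexed by $V_2$.

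\begin{itemize}
\item $S$ is a transposition on $\mathcal{T}_{1,1}=\{a_1,c_1\}$ and a transposition on $\mathcal{T}_{2,1}=\{a_2,b,c_2\}$. So $\varepsilon_1(S)=(1,1)$.
\item $M$ is trivial on $\mathcal{T}_{1,1}$ and a transposition on $\mathcal{T}_{2,1}$. So $\varepsilon_1(M)=(0,1)$.
\end{itemize}

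By \eqref{parity-map}, $\varepsilon_{\mathsf{B}}(g)=[\varepsilon_1(g_1),2]$. Push once with $\overline{M}_{\mathsf{B}}=\begin{pmatrix}1&1\\1&0\end{pmatrix}\bmod 2$, using \eqref{comptabpar}:
\begin{itemize}
\item $(1,1)\mapsto(0,1)$, so $\varepsilon_{\mathsf{B}}(S)=[(0,1),3]$;
\item $(0,1)\mapsto(1,0)$, so $\varepsilon_{\mathsf{B}}(M)=[(1,0),3]$.
\end{itemize}
The vertex ordering and the row/column convention must match the paper's incidence matrix, $m_{w,v}$ with rows indexed by $V_{n+1}$. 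This bookkeeping is the only delicate point: one should check against the explicit edges that vertex $2$ at the next level receives edges from $1$ (via $a_2$) and twice from $2$ (via $b,c_2$). Since $\overline{M}_{\mathsf{B}}$ is symmetric, the transpose issue is harmless.

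Finally, $M,S\in\widehat{G}_{\mathrm{LMS}}\cap\mathsf{S}(\mathfrak{T}_{\mathsf{B}})$, and their parities $[(1,0),3]$ and $[(0,1),3]$ form a basis of $\mathsf{P}_{\mathsf{B}}$. Therefore $\mathsf{P}_{\widehat{G}_{\mathrm{LMS}}}=\mathsf{P}_{\mathsf{B}}$. No deeper argument is required beyond verifying that the level-$1$ actions of $M$ and $S$ have trivial sections, which is their finitarity.
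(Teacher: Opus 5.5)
\textbf{There is a genuine gap in the treatment of $S$, and the paper's written proof contains the same slip.} Your argument is the paper's proof almost line for line. The parts about $\mathsf{P}_{\mathsf{B}}$ and $M$ are fine: $\overline{M}_{\mathsf{B}}$ is invertible over $\mathbb{Z}/2\mathbb{Z}$, $M=(b\,c_2)$ is a genuine level-one prefix permutation (no connector carries $M$), and $\varepsilon_{\mathsf{B}}(M)=[\overline{M}_{\mathsf{B}}(0,1),3]=[(1,0),3]$.

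The step that fails is regarding $S$ as an element of $\mathsf{S}_1$. Your closing sentence, which identifies finitarity with trivial level-one sections, is where it breaks: finitary only means a prefix permutation at \emph{some} level. In the tile inflation, $\mathcal{T}_{2,2}$ contains an $S$-labelled connector between $\mathcal{T}_{2,1}c_2$ and $\mathcal{T}_{2,1}b$. This is the element $s''_1=S$ of Lemma~\ref{lem:LMS-connector-selectors}, which the paper itself calls finitary at level~$2$. Since $a_2$ and $b$ already carry $S$-edges in $\mathcal{T}_{2,1}$, this connector joins $c_2c_2$ and $c_2b$. So the section of $S$ at $c_2$ is nontrivial, and $S\notin\iota_{1,2}(\mathsf{S}_1)$.

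No later connector carries the label $S$, so $S$ is the identity on $[c_2c_1]$. At level $2$ one therefore has
\[
S=\iota_{1,2}\bigl((a_1\,c_1)(a_2\,b)\bigr)\cdot(c_2c_2\;\,c_2b).
\]
The first factor has parity vector $\overline{M}_{\mathsf{B}}(1,1)=(0,1)$. The second factor is one extra transposition in $\mathcal{T}_{2,2}$, contributing another $(0,1)$. Concretely, $S$ is $(a_1a_1\,c_1a_1)(a_2c_1\,bc_1)$ on $\mathcal{T}_{1,2}$ and four disjoint transpositions on $\mathcal{T}_{2,2}$. Hence $\varepsilon_2(S)=(0,0)$ and $\varepsilon_{\mathsf{B}}(S)=0$, not $[(0,1),3]$. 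The value $[(0,1),3]$ is correct only for the level-one permutation $(a_1\,c_1)(a_2\,b)$, not for the generator $S$. Consequently $M$ and $S$ span only a line in $\mathsf{P}_{\mathsf{B}}$, and your basis argument does not go through.

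The equality $\mathsf{P}_{\widehat{G}_{\mathrm{LMS}}}=\mathsf{P}_{\mathsf{B}}$, which is what is used later, can be rescued with a different finitary element. By \eqref{eq:LMS-finite-modification}, $\tau=L'_0L'_1L'_2$ lies in $\widehat{G}_{\mathrm{LMS}}\cap\mathsf{S}(\mathfrak{T}_{\mathsf{B}})$. Since $\tau=(p\,q)(p'\,q')$ with $p,q\in\mathcal{T}_{1,2}$ and $p',q'\in\mathcal{T}_{2,2}$, it is one transposition in each level-$2$ tile, so $\varepsilon_{\mathsf{B}}(\tau)=[(1,1),3]$. Together with $\varepsilon_{\mathsf{B}}(M)=[(1,0),3]$ this is a basis of $\mathsf{P}_{\mathsf{B}}\cong(\mathbb{Z}/2\mathbb{Z})^2$. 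To close the gap, compute each parity at a level where the element really is a prefix permutation (level $2$ for $S$ and $\tau$), use $M$ and $\tau$ instead of $M$ and $S$, and drop or correct the displayed value of $\varepsilon_{\mathsf{B}}(S)$.
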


\begin{proof}
Reducing the stationary incidence matrix modulo $2$, we obtain \[\overline{M}_{\mathsf{B}}=\begin{pmatrix}1&1\\1&0\end{pmatrix}\in\operatorname{GL}_2(\mathbb{Z}/2\mathbb{Z}).\] Therefore every canonical map $(\mathbb{Z}/2\mathbb{Z})^2\longrightarrow\mathsf{P}_{\mathsf{B}}$ is an isomorphism, and $\mathsf{P}_{\mathsf{B}}\cong(\mathbb{Z}/2\mathbb{Z})^2$.

We use the level-$3$ of $\mathsf{B}$ to represent the parity vectors, since the parity vector of an element of $\mathsf{S}_2$ is indexed by $V_3$. At level $1$, the elements $M$ and $S$ act by \[M=(b\,c_2),\qquad S=(a_1\,c_1)(a_2\,b).\] Their parity vectors in $(\mathbb{Z}/2\mathbb{Z})^{V_2}$ are $(0,1)$ and $(1,1)$, respectively. Applying $\overline{M}_{\mathsf{B}}$ gives \[\overline{M}_{\mathsf{B}}(0,1)=(1,0),\qquad\overline{M}_{\mathsf{B}}(1,1)=(0,1).\] Hence \[\varepsilon_{\mathsf{B}}(M)=[(1,0),3],\qquad\varepsilon_{\mathsf{B}}(S)=[(0,1),3].\] These two classes form a basis of $\mathsf{P}_{\mathsf{B}}$, and therefore $\mathsf{P}_{\widehat{G}_{\mathrm{LMS}}}=\mathsf{P}_{\mathsf{B}}$.
\end{proof}

\begin{cor}
\label{cor:LMS-AF-symmetric-containment}
We have $\mathsf{S}(\mathfrak{T}_{\mathsf{B}})\leq\widehat{G}_{\mathrm{LMS}}$.
\end{cor}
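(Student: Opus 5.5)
The corollary follows from the AF parity exact sequence (Proposition~\ref{prop:AF-parity-exact-sequence}), the containment of the AF alternating group (Proposition~\ref{prop:LMS-AF-alternating-containment}), and the parity computation in Proposition~\ref{prop:LMS-AF-parity}. Let $q\in\mathsf{S}(\mathfrak{T}_{\mathsf{B}})$ be arbitrary. By Proposition~\ref{prop:LMS-AF-parity}, the classes $\varepsilon_{\mathsf{B}}(M)=[(1,0),3]$ and $\varepsilon_{\mathsf{B}}(S)=[(0,1),3]$ form a basis of $\mathsf{P}_{\mathsf{B}}\cong(\mathbb{Z}/2\mathbb{Z})^2$. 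Hence there are exponents $\epsilon_1,\epsilon_2\in\{0,1\}$ such that
\[
\varepsilon_{\mathsf{B}}(q)=\epsilon_1\varepsilon_{\mathsf{B}}(M)+\epsilon_2\varepsilon_{\mathsf{B}}(S).
\]

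We must check that $M$ and $S$ themselves lie in $\mathsf{S}(\mathfrak{T}_{\mathsf{B}})$, so that $\varepsilon_{\mathsf{B}}$ is defined on them. They are finitary, since their actions are the level-$1$ prefix permutations $M=(b\,c_2)$ and $S=(a_1\,c_1)(a_2\,b)$ with trivial sections below level $1$, as recorded in Lemma~\ref{lem:LMS-level-one}. They are also generators of $\widehat{G}_{\mathrm{LMS}}$. Thus $b:=M^{\epsilon_1}S^{\epsilon_2}\in\widehat{G}_{\mathrm{LMS}}\cap\mathsf{S}(\mathfrak{T}_{\mathsf{B}})$, and since $\varepsilon_{\mathsf{B}}$ is a homomorphism to an abelian group, $\varepsilon_{\mathsf{B}}(b)=\varepsilon_{\mathsf{B}}(q)$.

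Then $\varepsilon_{\mathsf{B}}(qb^{-1})=0$, so $qb^{-1}\in\ker\varepsilon_{\mathsf{B}}=\mathsf{A}(\mathfrak{T}_{\mathsf{B}})$ by exactness of \eqref{af-parity-exact}. Proposition~\ref{prop:LMS-AF-alternating-containment} gives $\mathsf{A}(\mathfrak{T}_{\mathsf{B}})\leq\widehat{G}_{\mathrm{LMS}}$, hence $q=(qb^{-1})b\in\widehat{G}_{\mathrm{LMS}}$.

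This is the argument of Corollary~\ref{cor:AF-symmetric-parity-completion} with $d=0$, because $\mathsf{P}_{\widehat{G}_{\mathrm{LMS}}}=\mathsf{P}_{\mathsf{B}}$. Proposition~\ref{prop:AF-parity-exact-sequence} requires a simple diagram with bounded $|V_n|$. Here $|V_n|=2$ for all $n$, and the incidence matrix $\begin{pmatrix}1&1\\1&1\end{pmatrix}$ has all entries positive, so the diagram is simple. I see no real obstacle. The only point needing care is the well-definedness of the parity classes of $M$ and $S$ computed at level $3$, and Proposition~\ref{prop:LMS-AF-parity} already handles this.
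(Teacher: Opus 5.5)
Your argument is the paper's argument. The paper's proof quotes Proposition~\ref{prop:LMS-AF-parity} for $\mathsf{P}_{\widehat{G}_{\mathrm{LMS}}}=\mathsf{P}_{\mathsf{B}}$ and combines it with $\mathsf{A}(\mathfrak{T}_{\mathsf{B}})\leq\widehat{G}_{\mathrm{LMS}}$ and exactness of \eqref{af-parity-exact}, which is what you unpack as the case $d=0$ of Corollary~\ref{cor:AF-symmetric-parity-completion}. The problem lies in the step you called routine, the parity of $S$. You inherit it from Proposition~\ref{prop:LMS-AF-parity}, so the paper's two-line proof has the same weak point.

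$S$ is not a level-$1$ prefix permutation with trivial sections below level $1$. The tile $\mathcal{T}_{2,2}$ contains the $S$-connector joining $u=c_2c_2\in\mathcal{T}_{2,1}c_2$ and $v=c_2b\in\mathcal{T}_{2,1}b$. This is why the paper takes $s''_1=S$ in Lemma~\ref{lem:LMS-connector-selectors}, and why Step~3 says $S$ is finitary at level~$2$. So $S\in\mathsf{S}(\mathfrak{T}_{\mathsf{B}})$ still holds, but its level-$2$ representative is $(a_1a_1\;c_1a_1)(a_2c_1\;bc_1)$ on $\mathcal{T}_{1,2}$ and $(a_1a_2\;c_1a_2)(a_2c_2\;bc_2)(a_2b\;bb)(c_2c_2\;c_2b)$ on $\mathcal{T}_{2,2}$, and both are even. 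Equivalently, $S=S_1t$, where $S_1$ is the level-$1$ prefix permutation $(a_1\,c_1)(a_2\,b)$ and $t=(c_2c_2\;c_2b)$. Here $\varepsilon_{\mathsf{B}}(t)=[(0,1),3]$ cancels $\varepsilon_{\mathsf{B}}(S_1)=[(0,1),3]$. Hence $\varepsilon_{\mathsf{B}}(S)=0$, the classes of $M$ and $S$ do not span $\mathsf{P}_{\mathsf{B}}$, and for $q$ with $\varepsilon_{\mathsf{B}}(q)=[(0,1),3]$ no element $M^{\epsilon_1}S^{\epsilon_2}$ works.

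The repair is immediate. By \eqref{eq:LMS-finite-modification}, $\tau=L'_0L'_1L'_2\in\widehat{G}_{\mathrm{LMS}}$, and $\tau=(p\,q)(p'\,q')$ is one transposition in each of $\mathcal{T}_{1,2}$ and $\mathcal{T}_{2,2}$, so $\varepsilon_{\mathsf{B}}(\tau)=[(1,1),3]$. Your value $\varepsilon_{\mathsf{B}}(M)=[(1,0),3]$ is correct, since $M=(b\,c_2)$ has no connectors after level~$1$. These two classes span $\mathsf{P}_{\mathsf{B}}$, so take $M^{\epsilon_1}\tau^{\epsilon_2}$ in place of $M^{\epsilon_1}S^{\epsilon_2}$, and the rest of your argument goes through unchanged. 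A minor slip: the incidence matrix is $\begin{pmatrix}1&1\\1&2\end{pmatrix}$, not $\begin{pmatrix}1&1\\1&1\end{pmatrix}$, though simplicity is unaffected.
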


\begin{proof}
By Proposition~\ref{prop:LMS-AF-parity}, the image of $\widehat{G}_{\mathrm{LMS}}\cap\mathsf{S}(\mathfrak{T}_{\mathsf{B}})$ under $\varepsilon_{\mathsf{B}}$ is the whole parity group
$\mathsf{P}_{\mathsf{B}}$. Since $\mathsf{A}(\mathfrak{T}_{\mathsf{B}})\leq\widehat{G}_{\mathrm{LMS}}$, the AF parity exact sequence~\eqref{af-parity-exact} gives $\mathsf{S}(\mathfrak{T}_{\mathsf{B}})\leq\widehat{G}_{\mathrm{LMS}}$.
\end{proof}

\subsection{Singular germs and the topological full group}
\label{subsec:LMS-singular-full-group}

Let $\xi=c_2^\omega$.  For $n\geq 1$, let $H_n$ denote the shifted group of germs at $\xi^{(n)}$. We use the canonical restriction isomorphisms $H_n\longrightarrow H_{n+1}$ from Corollary~\ref{cor:one-step-singular-restriction}. Let
\[K=\left\langle L'_0,L'_1,L'_2,L''_0,L''_1,L''_2
\right\rangle.\]
Since every fragmented generator fixes $\xi$ and has a nontrivial germ
there, $K$ is the subgroup $K_1$ appearing in
Proposition~\ref{prop:singular-germ-representatives}.

\begin{prop}[Singular germs and localization]
\label{prop:LMS-singular-germs}
Let $\xi=c_2^\omega$.  The point $\xi$ is the unique boundary point of the infinite tiles of the modified tile inflation, and it is germ-defining relative to the generating set of $\widehat{G}_{\mathrm{LMS}}$. Consequently, $\widehat{G}_{\mathrm{LMS}}$ satisfies the finite singular germ condition with $\Xi=\{\xi\}$.

For every $n\geq1$, the shifted group of germs $H_n$ at $\xi^{(n)}$ satisfies $H_n\cong(\mathbb{Z}/2\mathbb{Z})^4$.

Moreover, let $h\in H_n$, and let $h_m\in H_m$ be its image under the canonical restriction map, where $m\geq n$.  After taking $m$ sufficiently large, there is $\widehat{h}\in K$ such that:

\begin{enumerate}
\item
the cylinder $[c_2^m]$ is $\widehat{h}$-invariant;

\item
the shifted local transformation $\widetilde{h}_m=\kappa_{c_2^m}^{-1}\widehat{h}\kappa_{c_2^m}$ represents $h_m$;

\item
every germ of $\widehat{h}$ outside $[c_2^m]$ belongs to $\mathfrak{T}_{\mathsf{B}}$.
\end{enumerate}

We may also choose $m$ so that the tile containing $c_2^m$ has at least three vertices.
\end{prop}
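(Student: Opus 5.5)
The plan is to verify the three parts in order: the boundary-point and germ-defining claims, then the structure of $H_n$, then the localization. All three use only the explicit connector formulas \eqref{conl11}--\eqref{conl22} and the periodic fragmentation rule \eqref{connector112}.

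\emph{Boundary points and germ-defining.} First I would list, for each level $n$, the boundary vertices of $\mathcal{T}_{1,n}$ and $\mathcal{T}_{2,n}$. These are exactly the prefixes of the connecting points that will be used at later levels. From \eqref{conl11}--\eqref{conl22}, the connecting points used at level $k+1$ have the forms $c_2^{k-1}c_1a_1$, $c_2^{k-1}bc_1$, $c_2^{k-2}c_1a_1a_2$, $c_2^{k-1}bc_2$, $c_2^{k-2}bbc_2$ and $c_2^{k-2}bbb$. A vertex is a boundary vertex at level $n$ only if it is a truncation of such a path for some $k>n$.

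An infinite path that is a boundary vertex at every level must therefore have, for every $n$, its length-$n$ prefix equal to a prefix of $c_2^{k}\cdots$ with $k\to\infty$. Hence the path is $c_2^\omega$. The extension of the labels at $\xi$ makes every $L^\varepsilon_j$ fix $\xi$. The generators $M$, $S$ and the fragments at points other than $\xi$ act by AF prefix replacements, which gives \ref{cond:finite-singular:S1}--\ref{cond:finite-singular:S3} with $\nu\equiv1$. The modification by $\tau$ does not affect this, since $\tau$ has trivial germ at $\xi$.

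For germ-defining, condition~\ref{Germsin-cond-1} holds because $\xi$ is the only point at which some generator has a non-AF germ, and $\xi$ lies in a single orbit. For condition~\ref{Germsin-cond-2}, take a nonloop cycle $\zeta, s_1(\zeta),\dots$. The visited points are pairwise distinct, so no step is a singular self-loop $\xi\to\xi$. By \ref{cond:finite-singular:S3}, every step germ is then AF, so $(g,\zeta)$ is an isotropy element of the principal groupoid $\mathfrak{T}_{\mathsf{B}}$ and is trivial.

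\emph{Computation of $H_n$.} By Proposition~\ref{prop:canonical-singular-restriction} it suffices to compute $H_1=H_\xi$, and by Proposition~\ref{prop:singular-germ-representatives} this group is generated by the germs of the six fragments.

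Near $\xi$, each fragment acts as an involution supported on the disjoint cylinders of the connectors at levels $k$ with $k$ in the appropriate residue classes modulo $3$. Fragments act on pairwise disjoint connector cylinders or coincide there, so all six germs commute and have order at most $2$. The germ of $\tau$ at $\xi$ is trivial and $\overline{L}'_0\overline{L}'_1\overline{L}'_2=\Id$, so we get $L'_0L'_1L'_2=\tau$ and $L''_0L''_1L''_2=\Id$ at the level of germs. Hence $H_\xi$ is a quotient of $(\mathbb{Z}/2\mathbb{Z})^4$.

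For the reverse bound, I would take a nontrivial product $\prod_j(L'_j)^{a_j}\prod_j(L''_j)^{b_j}$ that is not in the span of the two relations. Its primed and doubly primed parts each reduce to a single fragment or the identity. I would then exhibit connector points $c_2^{k-1}c_1a_1\cdots$ (respectively $c_2^{k-2}bbc_2\cdots$) with $k$ in a residue class where that fragment is active and $k$ arbitrarily large, and check that these points are moved. Such points converge to $\xi$, so the germ is nontrivial. This gives $H_n\cong(\mathbb{Z}/2\mathbb{Z})^4$.

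\emph{Localization.} This is the main obstacle. A single generator need not preserve $[c_2^m]$: the Connector~1 instance in $\mathcal{T}_{2,k+1}$ pairs $c_2^{k-2}c_1a_1a_2$ with $c_2^{k-1}bc_2$, and for $k-1=m$ it crosses the boundary of $[c_2^m]$.

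The other connectors do not cause this problem. Those at levels with $k-2\ge m$ have both endpoints inside $[c_2^m]$. Those at smaller levels, namely the $L''$-connectors and the $\mathcal{T}_{1}$ instance of Connector~1, have both endpoints outside $[c_2^m]$, since both endpoints start with $c_2^{j}$ followed by a letter other than $c_2$ with $j<m$. The low-level support of $\tau$ also lies outside $[c_2^m]$ for large $m$.

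Given $h$, I would first write its preimage in $H_\xi$ as $L'_{j}L''_{l}$, with each factor possibly trivial, using the relations to choose a single fragment on each side. Then I would choose $m$ large in the residue class for which the fragment $L'_j$ is inactive at the one crossing connector. This is always possible, because each fragment is inactive in exactly one residue class modulo $3$. With $\widehat h=L'_jL''_l\in K$, the cylinder $[c_2^m]$ is $\widehat h$-invariant, and by construction $\kappa_{c_2^m}^{-1}\widehat h\kappa_{c_2^m}$ represents $h_m$. Item~(3) then follows from Proposition~\ref{prop:singular-germ-representatives}, since every point outside $[c_2^m]$ differs from $\xi$. Finally, enlarging $m$ within the same residue class makes the tile of $c_2^m$ have at least three vertices.
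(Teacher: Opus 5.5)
Your proposal is correct and follows essentially the same route as the paper: the same nonloop-cycle argument for the germ-defining property, and the same commuting-involution count (six fragment germs, one relation on each side) giving $H_n\cong(\mathbb{Z}/2\mathbb{Z})^4$. The localization step is also the paper's: the only connector crossing the boundary of $[c_2^m]$ is $c_2^{m-1}c_1a_1a_2\leftrightarrow c_2^mbc_2$, and you take $m$ in the residue class where the chosen primed fragment is inactive there. The differences are cosmetic. You derive uniqueness of the boundary point directly from the connector list (valid, using finiteness of the boundary points) instead of citing the earlier LMS tile-inflation analysis. You also use $L'_0$ itself rather than $\overline{L}'_0=\tau L'_0$, which is legitimate because $\tau$ is supported on $[p]\sqcup[q]\sqcup[p']\sqcup[q']$, disjoint from $[c_2^m]$. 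Your sorting of the non-crossing connectors is slightly off: the $\mathcal{T}_1$ instance at index $m+1$ lies inside $[c_2^m]$, and the $\mathcal{T}_2$ instances of Connector~1 at indices $\le m$ are omitted. This does not affect the conclusion.
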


\begin{proof}
The continuation of boundary points for the original LMS tile inflation described in \cite[Subsection~5.3]{kua26} shows that the only infinite path whose truncations remain boundary vertices at every level is $\xi=c_2^\omega$.

The splitting $L=L'L''$ separates the two connector roles but does not change the boundary vertices.  Similarly, the fragmentations only replace the label on an existing boundary edge by several fragment labels; they do not create new boundary edges or new boundary vertices. At every fragmentation piece, at least one fragment remains active, so the original persistent boundary path is not removed. Finally, the transformation $\tau$ is finitary and is supported away from a neighborhood of $\xi$. It contributes only internal edges after a finite level. Hence the set of boundary points of the modified tile inflation is still $\{\xi\}$.

The generators $M$ and $S$ are finitary. Every non-AF section of a primed or double-primed fragment continues only along the letter $c_2$. Thus each singular generator fixes $\xi$, has a non-AF germ at $\xi$, and has only AF germs at all other points.  In particular, $\xi$ is the only point in its orbit fixed by a chosen generator with a nontrivial germ.

We now verify that $\xi$ is germ-defining. Since $\xi$ is the only point in its orbit fixed by a chosen generator with a nontrivial germ, Condition~\ref{Germsin-cond-1} in Definition~\ref{Germsin} is automatically satisfied. Let
$g=s_k\cdots s_1$ be a nonloop cycle based at a point $\zeta\in\widehat{G}_{\mathrm{LMS}}\cdot\xi$, and put
$\zeta_j=s_j\cdots s_1(\zeta)$. Suppose that some factor germ $(s_j,\zeta_{j-1})$ is non-AF.  Then $\zeta_{j-1}=\xi$, and the generator $s_j$ fixes $\xi$. Hence $\zeta_j=\zeta_{j-1}$, contradicting the condition that the intermediate vertices of a nonloop cycle are distinct. Therefore, every factor germ belongs to $\mathfrak{T}_{\mathsf{B}}$.  The germ $(g,\zeta)$ is then isotropic in the principal groupoid $\mathfrak{T}_{\mathsf{B}}$, and hence it is trivial. Thus $\xi$ is germ-defining.

We now compute the shifted group of germs. Fix $\epsilon\in\{',\,''\}$, and let $\ell_{0,n}^{\epsilon},\ell_{1,n}^{\epsilon},\ell_{2,n}^{\epsilon}$ be the germs at $\xi^{(n)}$ of the three fragments of $L^\epsilon$.

The periodic fragmentation rule has the following activity table, where
$1$ means that the fragment acts as $L^\epsilon$ and $0$ means that it
acts as the identity:
\[
\begin{array}{c|ccc}
m\bmod 3
&
L^\epsilon_0
&
L^\epsilon_1
&
L^\epsilon_2
\\ \hline
2 & 1 & 1 & 0\\
0 & 1 & 0 & 1\\
1 & 0 & 1 & 1
\end{array}
\]
Each of the three rows occurs infinitely often along every shifted tail $\xi^{(n)}$. Since the fragments are restrictions of the same involution to invariant clopen pieces, their germs are commuting
involutions. Thus
\[(\ell_{j,n}^{\epsilon})^2=1,
\qquad [\ell_{j,n}^{\epsilon},\ell_{k,n}^{\epsilon}]=1.\]

Consider the homomorphism
\[(\mathbb{Z}/2\mathbb{Z})^3
\longrightarrow\left\langle
\ell_{0,n}^{\epsilon},\ell_{1,n}^{\epsilon},\ell_{2,n}^{\epsilon}\right\rangle\]
sending $(a_0,a_1,a_2)$ to $
(\ell_{0,n}^{\epsilon})^{a_0}
(\ell_{1,n}^{\epsilon})^{a_1}
(\ell_{2,n}^{\epsilon})^{a_2}$.

The resulting germ is trivial precisely when it is inactive in each
of the three rows of the activity table. Equivalently,
\[
a_0+a_1=0,
\qquad
a_0+a_2=0,
\qquad
a_1+a_2=0.
\]
The solution space is $\{(0,0,0),(1,1,1)\}$. Consequently,
\[
\left\langle
\ell_{0,n}^{\epsilon},
\ell_{1,n}^{\epsilon},
\ell_{2,n}^{\epsilon}
\right\rangle
\cong
(\mathbb{Z}/2\mathbb{Z})^3/
\langle(1,1,1)\rangle
\cong
(\mathbb{Z}/2\mathbb{Z})^2.
\]
In particular, every $\ell_{j,n}^{\epsilon}$ is nontrivial, and the
only relation among the three generators, in addition to commutativity
and order two, is $\ell_{0,n}^{\epsilon}\ell_{1,n}^{\epsilon}\ell_{2,n}^{\epsilon}=1$.

The primed and double-primed transformations carry disjoint connector roles. Hence their germs commute. Moreover, every nontrivial primed germ acts nontrivially on arbitrarily small primed connector neighborhoods on
which all double-primed fragments are trivial, and conversely. Their
intersection is therefore trivial. It follows that
\[H_n\cong
(\mathbb{Z}/2\mathbb{Z})^2
\times(\mathbb{Z}/2\mathbb{Z})^2\cong
(\mathbb{Z}/2\mathbb{Z})^4.\]
The modification by $\tau$ does not affect this calculation, since $\tau$ has trivial germ at $\xi$. The canonical restriction
isomorphisms identify the groups obtained at different shifted levels.

It remains to verify the hypothesis of Proposition~\ref{prop:invariant-singular-representatives}. Equation~\eqref{eq:LMS-finite-modification} gives $\overline{L}'_0=\tau L'_0\in K$, and $\overline{L}'_0$ has the same germ at $\xi$ as $L'_0$. Under the canonical restriction isomorphism $H_\xi\cong H_1$, every element of the primed germ factor has a representative in $\{\Id,\overline{L}'_0,L'_1,L'_2\}$, and every element of the double-primed germ factor has a representative in $\{\Id,L''_0,L''_1,L''_2\}$. Hence every element of $H_\xi$ has a representative of the form $pd$ with $p$ and $d$ chosen from these two sets.

For a fixed $m$, the only primed connector crossing the boundary of $[c_2^m]$ is the instance of the connector~\eqref{conl12} indexed by $m+1$, whose connecting points are $c_2^{m-1}c_1a_1a_2$ and $c_2^mbc_2$. The periodic rule~\eqref{connector112} shows that $\overline{L}'_0,L'_1,L'_2$ are inactive at this connector when $m\equiv0,2,1\pmod3$, respectively. Thus each of these transformations preserves $[c_2^m]$ for arbitrarily large $m$. The two connecting points of every double-primed connector~\eqref{conl22} lie on the same side of the boundary of $[c_2^m]$, so each $L''_j$ preserves every cylinder $[c_2^m]$. It follows that every element of $H_\xi$ has a representative in $K$ preserving arbitrarily deep cylinders. Proposition~\ref{prop:invariant-singular-representatives} gives the localization condition. For every $h\in H_n$, it supplies a sufficiently large $m$ and a representative $\widehat{h}\in K$ satisfying items~(1) and~(2), while item~(3) follows from Proposition~\ref{prop:singular-germ-representatives}.
\end{proof}

\begin{thm}
\label{thm:LMS-topological-full-group}
Let $\mathfrak{G}=\operatorname{Germ}(\widehat{G}_{\mathrm{LMS}}\curvearrowright\Omega(\mathsf{B}))$. Then
$\widehat{G}_{\mathrm{LMS}}=\mathsf{F}(\mathfrak{G})$.
\end{thm}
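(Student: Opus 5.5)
The plan is to apply Theorem~\ref{thm:AF-parity-full-group-completion} to $\widehat{G}_{\mathrm{LMS}}$ and use its equality criterion. We therefore need to verify the hypotheses and then check that $\mathsf{P}_{\widehat{G}_{\mathrm{LMS}}}=\mathsf{P}_{\mathsf{B}}$, so that $d=0$ and $\widehat{G}^{\mathrm{par}}=\widehat{G}_{\mathrm{LMS}}=\mathsf{F}(\mathfrak{G})$.

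\textbf{Checking the setting.} The group $\widehat{G}_{\mathrm{LMS}}$ is generated by the eight transformations $L'_0,L'_1,L'_2,L''_0,L''_1,L''_2,M,S$, so it is finitely generated. It is of bounded type over the Bratteli diagram $\mathsf{B}$ of Figure~\ref{fig:penrose-bratteli-diagram}. This diagram is stationary with two vertices per level and with primitive incidence matrix $M_{\mathsf{B}}$, so it is simple and $\Omega(\mathsf{B})$ is infinite. The finite tiles have uniformly bounded boundary: only the finitely many connector labels \eqref{conl11}--\eqref{conl22} and their fragments occur. By Proposition~\ref{prop:LMS-singular-germs} there is a single boundary point $\xi=c_2^\omega$. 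The group of germs $\mathfrak{G}\setminus\mathfrak{T}_{\mathsf{B}}$ is discrete: every non-AF germ of a generator sits at $\xi$, and by the factorization of Proposition~\ref{prop:singular-factorization} the non-AF germs in each compact bisection are finite in number. The decomposition $S_{\mathrm{fin}}=\{M,S\}$, $S_{\mathrm{sing}}=\{L^{\epsilon}_j\}$ with $\nu\equiv 1$ gives \ref{cond:finite-singular:S1} and \ref{cond:finite-singular:S3}. Proposition~\ref{prop:LMS-singular-germs} gives \ref{cond:finite-singular:S2}, namely that $\xi$ is germ-defining. It also gives \ref{cond:finite-singular:S4}, since $H_\xi\cong H_1\cong(\mathbb{Z}/2\mathbb{Z})^4$ is finite.

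\textbf{The remaining hypotheses.} Containment of the AF alternating group is Proposition~\ref{prop:LMS-AF-alternating-containment}. For the localization condition, the last part of Proposition~\ref{prop:LMS-singular-germs} supplies, for every $h\in H_n$, a level $m\ge n$ and a representative $\widehat h\in K=K_1$. This $\widehat h$ leaves $[c_2^m]$ invariant and represents $h_m$, and $m$ can be chosen so that the tile containing $c_2^m$ has at least three vertices. These are exactly the hypotheses of Proposition~\ref{prop:singular-localization-parity}. Theorem~\ref{thm:AF-parity-full-group-completion} then gives $\widehat{G}_{\mathrm{LMS}}^{\mathrm{par}}=\mathsf{F}(\mathfrak{G})$.

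\textbf{Conclusion.} Proposition~\ref{prop:LMS-AF-parity} shows that the finitary generators $M,S\in\widehat{G}_{\mathrm{LMS}}\cap\mathsf{S}(\mathfrak{T}_{\mathsf{B}})$ have parities forming a basis of $\mathsf{P}_{\mathsf{B}}\cong(\mathbb{Z}/2\mathbb{Z})^2$. Hence $\mathsf{P}_{\widehat{G}_{\mathrm{LMS}}}=\mathsf{P}_{\mathsf{B}}$, and the equality criterion of the theorem gives $\widehat{G}_{\mathrm{LMS}}=\mathsf{F}(\mathfrak{G})$. Alternatively, Corollary~\ref{cor:LMS-AF-symmetric-containment} gives $\mathsf{S}(\mathfrak{T}_{\mathsf{B}})\le\widehat{G}_{\mathrm{LMS}}$, so $\widehat{G}^{\mathrm{par}}=\widehat{G}_{\mathrm{LMS}}$ directly.

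The main obstacle is making sure the bounded-type and discreteness hypotheses hold for the modified, fragmented generating set. In particular, $\tau$ and the fragmentation must introduce no new boundary points and must keep the germs at $\xi$ finite. I would argue this exactly as in the first part of the proof of Proposition~\ref{prop:LMS-singular-germs}: fragments only relabel existing boundary edges, and $\tau$ is finitary with support away from $\xi$.
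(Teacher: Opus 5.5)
Your proposal is correct and is essentially the paper's proof: cite Proposition~\ref{prop:LMS-AF-alternating-containment} for $\mathsf{A}(\mathfrak{T}_{\mathsf{B}})\leq\widehat{G}_{\mathrm{LMS}}$, Proposition~\ref{prop:LMS-singular-germs} for the finite singular germ and localization conditions, and Proposition~\ref{prop:LMS-AF-parity} for $\mathsf{P}_{\widehat{G}_{\mathrm{LMS}}}=\mathsf{P}_{\mathsf{B}}$, then apply the equality criterion of Theorem~\ref{thm:AF-parity-full-group-completion}. You also check the standing hypotheses, which the paper does not write out; for discreteness, rather than invoking Proposition~\ref{prop:singular-factorization} (stated under the standing assumption that $\mathfrak{G}\setminus\mathfrak{T}_{\mathsf{B}}$ is discrete), it is cleaner to note directly that a word $s_m\cdots s_1$ has a non-AF germ at $y$ only if $s_j\cdots s_1(y)=\xi$ for some $0\leq j<m$, so there are finitely many such $y$.
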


\begin{proof}
Proposition~\ref{prop:LMS-AF-alternating-containment} gives $\mathsf{A}(\mathfrak{T}_{\mathsf{B}})\leq\widehat{G}_{\mathrm{LMS}}$. Proposition~\ref{prop:LMS-singular-germs} verifies the finite singular germ condition and the localization condition. Proposition~\ref{prop:LMS-AF-parity} gives $\mathsf{P}_{\widehat{G}_{\mathrm{LMS}}}=\mathsf{P}_{\mathsf{B}}$. The equality criterion in Theorem~\ref{thm:AF-parity-full-group-completion} therefore gives $\widehat{G}_{\mathrm{LMS}}=\mathsf{F}(\mathfrak{G})$.
\end{proof}

\def\BState{\State\hskip-\ALG@thistlm}
\makeatother
\let\oldbibitem\bibitem
\renewcommand{\bibitem}{\setlength{\itemsep}{0pt}\oldbibitem}







\end{document}